\definecolor{darkblue}{rgb}{0,0,0.7}
\definecolor{darkred}{rgb}{0.7,0,0}
\setlist{  
	listparindent=\parindent,
	parsep=0pt,
}
\newtheorem{theorem}{Theorem}[section]
\newtheorem{case}{Case}
\newtheorem{conj}{Conjecture}[section]
\newtheorem{claim}[theorem]{Claim}
\theoremstyle{definition}
\newtheorem{defn}{Definition}[section]
\newtheorem{lemma}[theorem]{Lemma}
\newtheorem{corollary}[theorem]{Corollary}
\newcommand{\pmcgg}{{{\mathcal{M}_p\left(\mathcal{G}_{2\times n}\right)}}}
\newcommand{\pmplt}{{{\mathcal{M}_p\left(\mathcal{E}_{n,k}\right)}}}
\newcommand{\pma}{{{\mathcal{M}_p\left(\mathcal{O}^{\mathcal{A}}_{n,k}\right)}}}
\newcommand{\pms}{{{\mathcal{M}_p\left(\mathcal{O}^{\mathcal{S}}_{n,k}\right)}}}
\newcommand{\pmt}{{{\mathcal{M}_p\left(\Delta_k\right)}}}
\newcommand{\twongg}{{{\mathcal{G}_{2 \times n}}}}
\newcommand{\ind}{\mathrm{Ind}}
\theoremstyle{remark}
\newtheorem*{remark}{Remark}
\pgfplotsset{compat=1.15}
\definecolor{ffffff}{rgb}{1,1,1}
\definecolor{xdxdff}{rgb}{0.49019607843137253,0.49019607843137253,1}
\definecolor{ududff}{rgb}{0.30196078431372547,0.30196078431372547,1}
\definecolor{qqffff}{rgb}{0,1,1}
\definecolor{qqffqq}{rgb}{0,1,0}
\definecolor{ffqqqq}{rgb}{1,0,0}
\definecolor{qqqqff}{rgb}{0,0,1}
\definecolor{qqqqcc}{rgb}{0,0,0.8}
\definecolor{ccqqqq}{rgb}{0.8,0,0}
\definecolor{wwwwww}{rgb}{0.4,0.4,0.4}
\newcommand{\addresseshere}{%
  \enddoc@text\let\enddoc@text\relax
}
\begin{document}

\title{Perfect Matching Complexes of Polygonal Line Tilings}
\author{Himanshu Chandrakar}
\address{Department of Mathematics, Indian Institute of Technology Bhilai, himanshuc@iitbhilai.ac.in}
\author{Anurag Singh}
\address{Department of Mathematics, Indian Institute of Technology Bhilai, anurags@iitbhilai.ac.in}
\date{}

\keywords{perfect matching, simplicial complex, grid graphs, line tiling}
\subjclass[2010]{05C70, 05E45, 55P15, 57M15.}

\begin{abstract}
    \justifying
    The \textit{perfect matching complex} of a simple graph $G$ is a simplicial complex having facets (maximal faces) as the \textit{perfect matchings} of $G$. This article discusses the perfect matching complex of polygonal line tilings and the $\left(2 \times n\right)$-grid graph in particular. We use tools from discrete Morse theory to show that the perfect matching complex of any polygonal line tiling is either contractible or homotopy equivalent to a wedge of spheres. While proving our results, we also characterize all the matchings of $\left(2 \times n\right)$-grid graph that cannot be extended to form a perfect matching.    
\end{abstract}

\maketitle
    
\section{Introduction}

The study of the topology of simplicial complexes derived from graph properties is a well-studied problem in topological combinatorics. Jonsson's work \cite{jonsson2008simplicial} on this is an excellent treatise in this regard. The matching complex is one such simplicial complex that has been an active research topic for about three decades now. A matching complex is a simplicial complex defined on the edge set of a graph where the maximal faces are the \textit{maximal matchings} (see \Cref{defn: Matching}) of the graph. Initial work on matching complexes can be traced back to 1992 from a paper by Bouc \cite{bouc1992homologie} in connection with the Brown and Quillen complexes. Afterward, much of the work in this regard appeared in association with other areas of topology, algebra and combinatorics. For instance, the matching complexes of the complete bipartite graphs (also known as the \textit{chessboard complexes}) are well studied (for example, see \cite{christos},  \cite{bjorner1994chessboard}, \cite{jojic2018h}, \cite{shareshian2007torsion}, \cite{ziegler1994shellability}). Wach's \cite{wachs2003topology} survey is an excellent article for more information about these complexes.

One also has a simplicial complex associated with a graph, defined using the \textit{perfect matchings} (see \Cref{defn: Perfect Matching}) of the graph, known as the \textit{perfect matching complex}. The perfect matching complex of a simple graph $G$, denoted by $\mathcal{M}\left(G\right)$, is a simplicial complex where the maximal faces are the perfect matchings of $G$. In a recent work \cite{bayer2022perfect}, Bayer et al. have discussed the homotopy type of the perfect matching complexes of honeycomb graphs for certain cases. Their motivation for studying the honeycomb graph's perfect matching complex lies in understanding the connection between the matching complex and the perfect matching complex. This is also the driving force for our work in this paper.

This paper first discusses the homotopy type of the perfect matching complex of the $\left(2\times n\right)$-grid graph, denoted by $\twongg$. The reason for choosing this graph lies in the fact that the matching complex of the $\left(2\times n\right)$-grid graph has already been studied (see \cite{Matsushita2018MatchingCO}); thus, learning about the perfect matching complex might help us understand a connection between these two complexes. 

Apart from the topological viewpoint, a graph with perfect matching is also rich in combinatorial properties. Interestingly, we use one such combinatorial property to obtain our results. Precisely speaking, we look into the matchings of the graph that are not contained in any perfect matching; we coin the term \textit{bad matchings} (see \Cref{defn: Bad Matching}) for them. This property slightly resembles the problem of the extendability of matchings, studied by Plummer (\cite{PLUMMER1980201}, \cite{PLUMMER1994301}, \cite{PLUMMER1994277}). However, in our problem, we aim to find all those matchings that cannot be extended to form a perfect matching, giving our problem a different combinatorial flavour.

Moreover, a $\left(2\times n\right)$-grid graph can be visualized as a square line tiling. This observation allows us to generalize our problem of discussing the perfect matching complex of $\twongg$ to the perfect matching complex of polygonal line tilings of any size, {\it i.e.}, any number of polygons attached in a line. This is the other problem we discuss in this paper. A result for the hexagonal line tiling is proved in \cite{bayer2022perfect}, in which the authors proved that the homotopy type of the perfect matching complex of the hexagonal line tiling is contractible using the nerve lemma. Moreover, like the $\left(2\times n\right)$-grid graph, the information regarding the matching complex of polygonal line tilings is also available to us due to the work of Matsushita (\cite{matsushita2022matching}) and Bayer et al. (\cite{GeneralBayer}); thus, we can still talk about the problem which dragged us here.

We majorly use a well-known combinatorial tool, Discrete Morse theory, to obtain our results. Robin Forman developed this theory (\cite{forman1998morse}, \cite{forman2002user}) as a tool to study the topological properties, primarily the homotopy types of the simplicial complexes. This approach involves matching faces within a simplicial complex, which is equivalent to a sequence of collapses, thereby establishing homotopy equivalence with a simplified cell complex. A convenient way of doing this is to define a \textit{discrete Morse function} on the simplicial complex. However, finding an \textit{optimum} discrete Morse function is $\mathtt{NP}-$hard (\cite{joswig}), equivalently, finding a favourable sequence of element pairing (see \Cref{def: Element Pairing}) in our case.

In this paper, we explicitly construct acyclic pairings to address all the problems related to polygonal line tilings. The only exception is the case of $\left(2 \times n\right)$-grid graphs, where we apply the fold lemma for the independence complexes (\Cref{foldlemma}), which effectively constructs an acyclic pairing (see proof of \cite[Lemma 2.4]{engstrom_clawfree}). More precisely, we prove the following results in this article.

\begin{theorem}
The perfect matching complex of 
    \begin{enumerate}
        \item (\Cref{thm: 2n grid graph homotopy}) the $\left(2 \times n\right)$-grid graph is homotopy equivalent to a $k$-sphere when $n$ is even and written as $n=2k+2$, and it is contractible when $n$ is odd;
        \item (\Cref{Theorem 7}) the line tiling of triangles is homotopy equivalent to the wedge sum of spheres or is contractible according to the number of triangles attached;
        \item (\Cref{thm: homotopy of 2n-gon}, \Cref{Theorem 6}, \Cref{Theorem 8}) the general line tiling of polygons with sides strictly greater than \textit{four} is contractible.
    \end{enumerate}
\end{theorem}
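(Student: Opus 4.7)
The theorem bundles three separate homotopy calculations, and my plan is to attack each via Forman's discrete Morse theory, as foreshadowed in the paper's introduction. A common first step is to pin down which matchings of the underlying graph $G$ extend to a perfect matching — that is, to characterise the \emph{bad matchings} introduced above — because the faces of $\mathcal{M}_p(G)$ are precisely those matchings that are \emph{not} bad. With this characterisation in hand the legal element pairings on the face poset become combinatorially transparent, and one is reduced to checking (i) that a proposed pairing covers all but a few faces and (ii) that it is acyclic.

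For part (1) I would argue by induction on $n$. Let $V_n$ denote the rightmost vertical edge; every perfect matching of $\mathcal{G}_{2\times n}$ either contains $V_n$ or else contains both horizontal edges $H^t_{n-1,n}$ and $H^b_{n-1,n}$ across the last column. Hence
\[
\mathrm{link}(V_n) \cong \mathcal{M}_p(\mathcal{G}_{2\times (n-1)}),
\]
while $\mathrm{del}(V_n)$ can be collapsed — using the fold lemma for a suitably associated independence complex, as the paper explicitly prescribes for this case — onto a contractible neighbourhood built from $\mathcal{M}_p(\mathcal{G}_{2\times(n-2)})$. A link/deletion homotopy pushout then produces a suspension recursion that, together with base cases $n=1$ (a single point) and $n=2$ (two disjoint $1$-simplices, homotopy equivalent to $S^0$), yields exactly the parity dichotomy asserted in the theorem.

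For parts (2) and (3) I would construct explicit acyclic element pairings on the face poset. Order the polygons $P_1,\ldots,P_r$ left to right and, within each polygon, fix a canonical edge $e_i$; then pair each face $F$ with $F\mathbin{\triangle}\{e_i\}$ for the smallest $i$ such that both $F$ and $F\mathbin{\triangle}\{e_i\}$ lie in $\mathcal{M}_p(G)$ and neither has already been matched. When the polygons have more than four sides, the abundance of edges at each polygon makes this greedy rule complete except at a single critical cell, forcing contractibility. For the triangle line tiling, on the other hand, every triangle contributes only three edges with one shared across neighbours, so the parity of $r$ and a small residue class dictate how many top-dimensional critical cells remain unmatched — and that count gives precisely the wedge of spheres predicted. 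The main obstacle throughout is the bad-matching bookkeeping: a local swap $F\leftrightarrow F\mathbin{\triangle}\{e_i\}$ can globally destroy extendability to a perfect matching by creating an odd cut across the tiling, so one must verify, directly from the bad-matching characterisation, that each swap is legal. Acyclicity of the final pairing will then follow from the induced left-to-right order on polygons, which rules out any gradient cycle crossing itself.
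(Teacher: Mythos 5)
Your overall toolkit (characterise bad matchings, then run element pairings or the fold lemma) matches the paper's, but the execution of part (1) contains a genuine error. The identification $\mathrm{link}(a_n)\cong \mathcal{M}_p\bigl(\mathcal{G}_{2\times(n-1)}\bigr)$ is correct, but your claim that the deletion of $a_n$ collapses onto something contractible is false: for $n=4$ the link is contractible, so the link/star pushout forces $\mathrm{del}(a_4)\simeq \mathcal{M}_p\left(\mathcal{G}_{2\times 4}\right)\simeq \mathbb{S}^1$. More fundamentally, a step-by-one suspension recursion $\mathcal{M}_p\left(\mathcal{G}_{2\times n}\right)\simeq \Sigma\,\mathcal{M}_p\bigl(\mathcal{G}_{2\times(n-1)}\bigr)$ is incompatible with the statement you are proving: for $n=2k+2$ the right-hand side is the suspension of a contractible space, hence contractible, not $\mathbb{S}^k$. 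The paper's recursion necessarily steps by \emph{two}: after encoding $\pmcgg$ as $\ind(X_n)$ (using \Cref{Lemma 1} to add the bad-matching edges), three applications of \Cref{foldlemma} delete $b_{n-2}$, $c_{n-2}$, $a_{n-1}$ and split off a $P_3$, giving $\ind(X_n)\simeq \ind(X_{n-2})\ast \mathbb{S}^0 = \Sigma\,\ind(X_{n-2})$, which is what produces the parity dichotomy. Your pushout at a single vertex cannot reproduce this without first computing the homotopy type of the deletion, which is where all the work lies. (A smaller imprecision: the faces of $\mathcal{M}_p(G)$ are the matchings containing no bad matching, not the matchings that are ``not bad''; a non-minimal non-extendable matching is neither bad nor a face.)

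For parts (2) and (3) the plan is also not yet a proof. For the triangle tiling the paper does not run a Morse argument at all: \Cref{lemma 5.1} shows the shared edges $a_{2i}$ lie in no perfect matching, so $\pmt$ is literally $\mathcal{M}_p\bigl(\mathcal{G}_{2\times(k+1)}\bigr)$ and part (2) is a corollary of part (1); your proposed ``residue-class count of critical cells'' is unsubstantiated and would in any case have to produce a \emph{single} sphere, not a generic wedge. For polygons with more than four sides, note that ``complete except at a single critical cell'' does not force contractibility: by \Cref{theorem 1} and \Cref{Corollary 1} one surviving $d$-cell yields $\mathbb{S}^d$, so contractibility requires the pairing to leave \emph{no} critical cells (the empty wedge). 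The paper achieves this with only three element pairings supported on the first two polygons (e.g.\ $a_1$, $b_{1,1}$, $c_{2,n-2}$), rather than one canonical edge per polygon, and the substance of the proof is the case analysis verifying that every face is paired and that no swap creates a bad matching. Finally, your proposal does not address the two inequivalent line arrangements of odd-sided polygons (simple versus alternate), which the paper must treat by different reductions (\Cref{Theorem 6} via \Cref{lemma 5.1} versus the direct pairing in \Cref{Theorem 8}).
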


The flow of the article is as follows: In the next section, we provide all the necessary definitions and preliminary results. \Cref{Section 3} discusses the homotopy type of the perfect matching complex of $\twongg$ by first defining all the possible bad matchings of $\twongg$ and then using the fold lemma. \Cref{Section 4} discusses the homotopy type of the perfect matching complex of a line tiling of even-sided polygons. \Cref{Section 5} discusses the homotopy type of the perfect matching complex of the line tiling of the odd-sided polygons as it follows from the result obtained for the perfect matching complex of $\twongg$ and the line tiling of the even-sided polygons. We end the paper by exploring some avenues for future research.

\section{Preliminaries}

\subsection{Graph theoretic notions}
    Let $G$ be a simple graph with $V\left(G\right)$ and $E\left(G\right)$ denoting the set of vertices and edges, respectively. Here, $E\left(G\right)$ is a subset of the set of cardinality two subsets of $V\left(G\right)$. We will use $V$ and $E$ if there is no ambiguity about the graph in discussion. 
    
    \begin{defn}\label{defn: Matching}
        A \textit{matching}, say $m$ in $G$, is a subset of the edge set $E$ in which no two edges share a common vertex.
    \end{defn}

    \begin{defn}\label{defn: Perfect Matching}
        A \textit{perfect matching} in $G$ is a matching $m_p$ that covers all the vertices, \textit{i.e.}, for all $v \in V$, there is $e \in m_p$ such that $v\in e$.
    \end{defn}

    For example, consider $C_6$ (cycle graph on six vertices) and label the edges and vertices of $C_6$ as shown in \Cref{fig: Bad matchings of C6}. Here, $\{a,\ d\}$ is a matching of $C_6$, and $\{a,\ f\}$ is not because they share a common vertex, namely $1$. The edge set $\left\{a,\ c,\ e\right\}$ is a perfect matching of $C_6$ since it covers all the vertices of $C_6$.
    
    Note here that we discuss the perfect matching of a graph having \textit{even} number of vertices. We say that a matching $m$ in $G$ can be \textit{extended} to form a perfect matching if a perfect matching $m_p$ exists, such that $m \subseteq m_p$. 

    \begin{defn}\label{defn: Bad Matching}
        A \textit{bad matching} is a minimal matching that cannot be extended to form a perfect matching in $G$.
    \end{defn}
    \vspace{-1.5em}
    \begin{figure}[h!]
    \centering
        \begin{minipage}{0.35\textwidth}
            \captionsetup{justification=centering}
        \centering
        \hspace{1.5em}\begin{tikzpicture}[scale=0.8,line cap=round,line join=round,>=triangle 45,x=0.5cm,y=0.5cm]
            \clip(-0.8,-20) rectangle (6.8,6.2);
            \draw [line width=1pt] (0,3)-- (0,0);
            \draw [line width=1pt,color=ffqqqq] (0,0)-- (2.598076211353316,-1.5);
            \draw [line width=1pt] (2.598076211353316,-1.5)-- (5.196152422706633,0);
            \draw [line width=1pt] (5.196152422706633,0)-- (5.196152422706634,3);
            \draw [line width=1pt,color=ffqqqq] (5.196152422706634,3)-- (2.5980762113533187,4.5);
            \draw [line width=1pt] (2.5980762113533187,4.5)-- (0,3);
            
            \begin{scriptsize}
                \draw [fill=xdxdff] (0,3) circle (2.5pt);
                \draw[color=xdxdff] (-0.5,3.2) node {\large 6};
                \draw [fill=xdxdff] (0,0) circle (2.5pt);
                \draw[color=xdxdff] (-0.5,-0.1) node {\large 5};
                \draw[color=black] (-0.35,1.5) node {\large$e$};
                \draw[color=black] (1,-1.1) node {\large$d$};
                \draw[color=black] (4.1,-1) node {\large$c$};
                \draw[color=black] (5.5,1.5) node {\large$b$};
                \draw[color=black] (4.1,4.1) node {\large$a$};
                \draw[color=black] (1,4.15) node {\large$f$};
                \draw [fill=xdxdff] (2.598076211353316,-1.5) circle (2.5pt);
                \draw[color=xdxdff] (2.6,-2.1) node {\large 4};
                \draw [fill=xdxdff] (5.196152422706633,0) circle (2.5pt);
                \draw[color=xdxdff] (5.65,-0.1) node {\large 3};
                \draw [fill=xdxdff] (5.196152422706634,3) circle (2.5pt);
                \draw[color=xdxdff] (5.65,3.2) node {\large 2};
                \draw [fill=xdxdff] (2.5980762113533187,4.5) circle (2.5pt);
                \draw[color=xdxdff] (2.6,5.2) node {\large 1};
            \end{scriptsize}
            \end{tikzpicture}
            \vspace{-18em}
            \caption{The bad matching $\left\{a,d\right\}$ of $C_6$.}
            \label{fig: Bad matchings of C6}
        \end{minipage}\hfill
        \begin{minipage}{0.6\textwidth}
        \captionsetup{justification=centering}
        \centering
        \begin{tikzpicture}[scale=0.8,line cap=round,line join=round,>=triangle 45,x=0.5cm,y=0.5cm]
            \clip(-0.8,-20) rectangle (16.8,6.2);
            \draw [line width=1pt] (0,3)-- (0,0);
            \draw [line width=1pt] (0,0)-- (2.598076211353316,-1.5);
            \draw [line width=1pt,color=ffqqqq] (2.598076211353316,-1.5)-- (5.196152422706633,0);
            \draw [line width=1pt] (5.196152422706633,0)-- (5.196152422706634,3);
            \draw [line width=1pt,color=ffqqqq] (5.196152422706634,3)-- (2.5980762113533187,4.5);
            \draw [line width=1pt] (2.5980762113533187,4.5)-- (0,3);

            \draw [line width=1.2pt,{Stealth[]}-] (9,1.5) -- (6.196152422706633,1.5) ;
            
            \draw [line width=1pt,color=ffqqqq] (10,3)-- (10,0);
            \draw [line width=1pt] (10,0)-- (12.598076211353316,-1.5);
            \draw [line width=1pt,color=ffqqqq] (12.598076211353316,-1.5)-- (15.196152422706633,0);
            \draw [line width=1pt] (15.196152422706633,0)-- (15.196152422706634,3);
            \draw [line width=1pt,color=ffqqqq] (15.196152422706634,3)-- (12.5980762113533187,4.5);
            \draw [line width=1pt] (12.5980762113533187,4.5)-- (10,3);
            
            \begin{scriptsize}
                \draw [fill=xdxdff] (0,3) circle (2.5pt);
                \draw[color=xdxdff] (-0.5,3.2) node {\large 6};
                \draw [fill=xdxdff] (0,0) circle (2.5pt);
                \draw[color=xdxdff] (-0.5,-0.1) node {\large 5};
                \draw[color=black] (-0.35,1.5) node {\large$e$};
                \draw[color=black] (1,-1.1) node {\large$d$};
                \draw[color=black] (4.1,-1) node {\large$c$};
                \draw[color=black] (5.5,1.5) node {\large$b$};
                \draw[color=black] (4.1,4.1) node {\large$a$};
                \draw[color=black] (1,4.15) node {\large$f$};
                \draw [fill=xdxdff] (2.598076211353316,-1.5) circle (2.5pt);
                \draw[color=xdxdff] (2.6,-2.1) node {\large 4};
                \draw [fill=xdxdff] (5.196152422706633,0) circle (2.5pt);
                \draw[color=xdxdff] (5.65,-0.1) node {\large 3};
                \draw [fill=xdxdff] (5.196152422706634,3) circle (2.5pt);
                \draw[color=xdxdff] (5.65,3.2) node {\large 2};
                \draw [fill=xdxdff] (2.5980762113533187,4.5) circle (2.5pt);
                \draw[color=xdxdff] (2.6,5.2) node {\large 1};

                \draw [fill=xdxdff] (10,3) circle (2.5pt);
                \draw[color=xdxdff] (9.5,3.2) node {\large 6};
                \draw [fill=xdxdff] (10,0) circle (2.5pt);
                \draw[color=xdxdff] (9.5,-0.1) node {\large 5};
                \draw[color=black] (9.65,1.5) node {\large$e$};
                \draw[color=black] (11,-1.1) node {\large$d$};
                \draw[color=black] (14.1,-1) node {\large$c$};
                \draw[color=black] (15.5,1.5) node {\large$b$};
                \draw[color=black] (14.1,4.1) node {\large$a$};
                \draw[color=black] (11,4.15) node {\large$f$};
                \draw [fill=xdxdff] (12.598076211353316,-1.5) circle (2.5pt);
                \draw[color=xdxdff] (12.6,-2.1) node {\large 4};
                \draw [fill=xdxdff] (15.196152422706633,0) circle (2.5pt);
                \draw[color=xdxdff] (15.65,-0.1) node {\large 3};
                \draw [fill=xdxdff] (15.196152422706634,3) circle (2.5pt);
                \draw[color=xdxdff] (15.65,3.2) node {\large 2};
                \draw [fill=xdxdff] (12.5980762113533187,4.5) circle (2.5pt);
                \draw[color=xdxdff] (12.6,5.2) node {\large 1};
            \end{scriptsize}
            \end{tikzpicture}
            \vspace{-18em}
            \caption{Extending the matching $\left\{a,c\right\}$ to the perfect matching $\left\{a,c,e\right\}$ of $C_6$.}
            \label{fig: Matching extending to perfect matching}
            \end{minipage}
    \end{figure}

     The term \lq minimal\rq\ saves us from counting any matching (again as a bad matching) that properly contains a bad matching, as such matchings cannot be extended to form a perfect matching, too. For example, consider the same graph $C_6$. Note that the edge sets $\left\{a,\ c\right\}$ and $\left\{a,\ d\right\}$ are matchings, among which $\left\{a,\ c\right\}$ can be extended to form the perfect matching $\left\{a,\ c,\ e\right\}$ (see \Cref{fig: Matching extending to perfect matching}), whereas $\left\{a,\ d\right\}$ can never be extended to form a perfect matching of $C_6$ since there is no edge left to cover the vertex 3 or 6.  It is easy to see that $\left\{a\right\}$ and $\left\{d\right\}$ can each be extended to perfect matchings and therefore $\left\{a,\ d\right\}$ is minimal (and hence bad) matching in this case (see \Cref{fig: Bad matchings of C6}).
    
    \subsection{Discrete Morse theory}

    As stated earlier, the primary tool for our discussion is the discrete Morse theory formulated by Forman \cite{forman1998morse, forman2002user}. This is the classical reference for discrete Morse theory, but we use the definitions and notations given in \cite{kozlov2008combinatorial}. We also follow the basic terminologies of the abstract simplicial complex from \cite{kozlov2008combinatorial} and refrain from discussing it here. 


    Note that the set of vertices for the perfect matching complex of a graph $G$ is a subset of $E(G)$. However, in our discussion, we will use the term \textit{edge} to denote the element of the set of vertices of the perfect matching complex. This is done to avoid confusion with the elements of $V(G)$ since we extensively mention them in our proof.
    
    Following the terminology used in \cite{bayer2022perfect}, we will use the term \textit{paired} instead of the conventional expression \textit{matched} when two faces are associated with each other to prevent confusion with matchings of a graph. We now explore some essential tools from discrete Morse theory needed in the subsequent sections.

    \begin{defn} (\cite[{Definition~11.1}]{kozlov2008combinatorial})
        \begin{enumerate}
            \item\label{defn: Partial pairing} A \textit{partial pairing} in a poset $\mathcal{P}$ is a subset $\mathcal{M} \subseteq \mathcal{P}\times\mathcal{P}$, such that
            \begin{itemize}
                \item $\left(\alpha,\beta\right) \in \mathcal{M}$ implies $\alpha \prec \beta$;
                \item each $\gamma \in \mathcal{P}$ belongs to at most one element (pair) in $\mathcal{M}$.
            \end{itemize}
            Here, $\alpha \prec \beta$ means there exists no $\delta \in \mathcal{P}$ such that $\alpha < \delta < \beta$. Moreover, note that $\mathcal{M}$ is a partial matching on a poset $\mathcal{P}$ if and only if there exists $\mathcal{T} \subset \mathcal{P}$ and an injective map $\phi: \mathcal{T} \to \mathcal{P} \setminus \mathcal{T}$ such that $t \prec \phi\left(t\right)$ for all $t \in \mathcal{T}$.

            \item\label{Acyclic pairing} A partial pairing on $\mathcal{P}$ is said to be \textit{acyclic} if there does not exist a cycle, $$\alpha_1 \prec \phi\left(\alpha_1\right) \succ \alpha_2 \prec \phi\left(\alpha_2\right) \succ \dots \succ \alpha_k \prec \phi\left(\alpha_k\right) \succ \alpha_1,$$
            where $k\geq2$ and all $\alpha_i \in \mathcal{P}$ are distinct.
        \end{enumerate}

        For an acyclic pairing $\mathcal{M}$ on poset $\mathcal{P}$, we define \textit{critical elements} to be those elements of $\mathcal{P}$ which remain unpaired.
    \end{defn}

    We now state the main theorem of discrete Morse theory.

    \begin{theorem}\label{theorem 1}(\cite[{Theorem~11.13}]{kozlov2008combinatorial})
        Let $\mathcal{K}$ be a polyhedral cell complex, and let $\mathcal{M}$ be an acyclic pairing on the face poset of $\mathcal{K}$. If $c_i$ denotes the number of critical $i$-dimensional cells of $\mathcal{K}$ then the space $\mathcal{K}$ is homotopy equivalent to a cell complex $\mathcal{K}_c$ with $c_i$ cells of dimension $i$ for each $i \geq 0$, plus a single 0-dimensional cell in the case where the empty set is matched in the pairing.
    \end{theorem}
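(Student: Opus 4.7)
The plan is to prove this foundational result of discrete Morse theory by realising the acyclic pairing $\mathcal{M}$ as a composable sequence of elementary collapses, each of which is a strong deformation retraction. Recall that collapsing a pair $(\alpha,\beta)$ with $\alpha \prec \beta$ is permissible when $\alpha$ is a \emph{free face} of $\beta$, meaning $\beta$ is the unique cell of the current subcomplex strictly containing $\alpha$; such a move preserves homotopy type and, applied enough times, reduces $\mathcal{K}$ to a CW-complex with cells indexed by exactly the unpaired cells.

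First I would orient the Hasse diagram of the face poset $\mathcal{P}$ by sending every covering relation from the larger cell to the smaller, and then reverse precisely those edges that correspond to pairs in $\mathcal{M}$. The hypothesis on $\mathcal{M}$ is exactly that this directed graph is acyclic, and therefore admits a topological sort, which will dictate the order in which to perform the collapses.

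Next, I would execute the collapses inductively: at each stage, select a pair $(\alpha,\beta) \in \mathcal{M}$ with $\beta$ maximal in the current restriction of the topological sort. Acyclicity of $\mathcal{M}$ ensures that for such a pair $\alpha$ has become a free face of $\beta$ in the subcomplex at that stage, so the elementary collapse is legitimate and preserves homotopy type. Iterating, every paired cell is eventually removed, leaving a CW-complex whose cells are exactly the critical elements of $\mathcal{M}$; this is the desired $\mathcal{K}_c$. The caveat about matching the empty face with a vertex is handled by observing that such a collapse would strip off a needed basepoint, so a single $0$-cell must be appended to recover the correct homotopy type.

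The main obstacle lies in verifying, at every stage of the induction, that the chosen $\alpha$ really is a free face, i.e., that no other uncollapsed cell strictly above $\alpha$ survives aside from $\beta$. This is the crux where acyclicity is essential: a surviving competitor $\beta'$ of $\alpha$ would give a cover relation $\beta' \succ \alpha$ oriented downward into $\alpha$ which, chained with the pair $(\alpha,\beta)$ and the (not-yet-executed) pairs separating $\beta$ and $\beta'$ in the topological sort, would close up into a directed cycle in the modified Hasse diagram, contradicting the hypothesis on $\mathcal{M}$. Formalising this bookkeeping with a careful induction along the topological sort, together with the standard fact that elementary collapses are deformation retractions, completes the proof.
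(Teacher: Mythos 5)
The paper does not prove this theorem itself; it is quoted from Kozlov's book, so your proposal must be judged against the standard argument there. Your strategy --- realising the acyclic pairing as a sequence of elementary collapses performed from the top of a topological sort downward --- does not work, and the failure is not a bookkeeping issue. The claim that ``acyclicity of $\mathcal{M}$ ensures that $\alpha$ has become a free face of $\beta$ at that stage'' is false: $\alpha$ may have a coface $\gamma$ that is a \emph{critical} cell, and since critical cells are never removed, $\alpha$ never becomes a free face at any stage. Your cycle argument in the last paragraph only excludes competitors that are themselves paired; a critical competitor participates in no pair, hence contributes no reversed edge and closes no directed cycle, so acyclicity says nothing about it. A concrete counterexample is the boundary of a triangle with vertices $1,2,3$: pair $\left(\{1\},\{1,2\}\right)$ and $\left(\{2\},\{2,3\}\right)$, leaving $\{3\}$ and $\{1,3\}$ critical. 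This is an acyclic matching and the theorem correctly predicts the homotopy type $\mathbb{S}^1$, but this complex has no free face at all, so not a single elementary collapse can ever be performed. More fundamentally, the critical cells need not form a subcomplex, so $\mathcal{K}$ cannot deformation retract onto them; the theorem asserts only a homotopy equivalence to a CW complex with, in general, new attaching maps.

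The standard proof runs in the opposite direction. Acyclicity yields a linear extension of the face poset in which each matched pair occupies two consecutive positions; one then builds $\mathcal{K}$ up by adding cells in that order. Adding a matched pair $(\alpha,\beta)$ is an elementary expansion --- $\alpha$ \emph{is} a free face of $\beta$ inside the partial complex built so far, because all of its other cofaces come later in the order --- so the homotopy type is unchanged; adding a critical $i$-cell attaches an $i$-cell up to homotopy, where one needs a homotopy-extension/cellular-approximation argument to transport the attaching map to the smaller model built inductively. The clause about the empty set likewise falls out of this bookkeeping (the pair $(\emptyset, v)$ removes a vertex from the critical count, and a nonempty complex still needs one $0$-cell). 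To repair your write-up you would need to replace the top-down collapsing scheme by this bottom-up expansion scheme; as written, the central step of your proof is a genuine gap.
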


    Based on the above theorem, the following information can be obtained:

    \begin{corollary}\label{Corollary 1}(\cite[Corollary~2.5]{deshpande2020higher})
        For an acyclic pairing $\mathcal{M}$, if all the critical cells in $\mathcal{M}$ are of dimension $d$, then $\mathcal{K}$ is homotopy equivalent to a wedge of $d$-dimensional spheres.
    \end{corollary}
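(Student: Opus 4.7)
The plan is to invoke \Cref{theorem 1} as a black box and read off the conclusion. Under the hypothesis that every critical cell in the acyclic pairing $\mathcal{M}$ has dimension $d$, the number $c_i$ of critical $i$-dimensional cells satisfies $c_i = 0$ for every $i \neq d$. Applying \Cref{theorem 1}, one obtains a CW complex $\mathcal{K}_c$ homotopy equivalent to $\mathcal{K}$ whose only cells are the $c_d$ cells of dimension $d$, together possibly with one extra $0$-dimensional cell arising in the case where the empty set is matched by $\mathcal{M}$.

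Next, I would verify that this extra $0$-cell must indeed be present whenever $d \geq 1$. If the empty set were not matched it would itself be a critical cell of dimension $-1$ (or $0$, depending on convention), contradicting the assumption that every critical cell has dimension $d \geq 1$; equivalently, a nonempty CW complex cannot exist without at least one $0$-cell, and since $c_0 = 0$, the only way to supply one is via the matched empty set. Thus $\mathcal{K}_c$ consists of exactly one $0$-cell $v_0$ together with $c_d$ cells of dimension $d$, and the case $d = 0$ reduces to a finite discrete set of points, which is a wedge of $0$-spheres.

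Finally, I would identify the homotopy type of $\mathcal{K}_c$ as $\bigvee^{c_d} S^d$. Since the $(d-1)$-skeleton of $\mathcal{K}_c$ is the single point $v_0$, each $d$-cell is attached along a map $S^{d-1} \to \{v_0\}$, which is necessarily the constant map. Collapsing the boundary of each disk $D^d$ to $v_0$ yields a copy of $S^d$ wedged at $v_0$, so $\mathcal{K}_c \cong \bigvee^{c_d} S^d$, giving the desired homotopy equivalence $\mathcal{K} \simeq \bigvee^{c_d} S^d$.

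There is no genuine obstacle here: the statement is a direct consequence of \Cref{theorem 1} combined with the elementary observation that a CW complex built from one $0$-cell and finitely many $d$-cells attached by constant maps is a wedge of $d$-spheres. The only point requiring mild care is bookkeeping the $0$-cell of $\mathcal{K}_c$, which is why the statement implicitly assumes $d \geq 1$ (or interprets a wedge of $0$-spheres as a discrete set).
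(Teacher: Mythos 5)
Your proposal is correct. The paper does not prove this statement itself --- it is quoted directly from \cite[Corollary~2.5]{deshpande2020higher} --- but your derivation is exactly the standard one: \Cref{theorem 1} gives a CW complex with $c_i=0$ for $i\neq d$, the matched empty set supplies the unique $0$-cell, and since the $(d-1)$-skeleton is a point every $d$-cell is attached by a constant map, yielding $\bigvee^{c_d}\mathbb{S}^d$ (with the empty wedge read as a point, consistent with the paper's remark). The only bookkeeping you handle, the presence of the base $0$-cell, is precisely the one subtlety worth handling, so there is nothing to add.
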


    We would like to point out here that the empty wedge would mean the space is contractible. A convenient way of matching elements on the face poset is to perform \textit{element pairing} on them. We define this as follows.

    \begin{defn}[\cite{deshpande2020higher}, \cite{jonsson2008simplicial}] \label{def: Element Pairing}
        Let $x$ be a vertex and $\Delta$ be a simplicial complex. The \textit{element pairing using $x$ on $\Delta$} is defined as the following set of pairs, \[M_{x} = \left\{\left(\sigma,\sigma \cup \left\{x\right\}\right)\ |\ x \notin \sigma,\ \sigma \cup \left\{x\right\} \in \Delta\right\}\]
    \end{defn}

    Let $C_{x}$ denote the set of cells left unpaired after element pairing with $x$ (critical cells). We then define element pairing using some other element, say $y$, on the elements of $C_{x}$ and denote the critical cells left unpaired after this element pairing as $C_{y}$. Note that, $C_{x} \supset C_{y}$. Similarly, we get a sequence of element pairings, and we stop this process when either no critical cells are left or when any additional element pairing results in the same set of critical cells. The advantage of performing this procedure is that the union of a sequence of element pairings is an acyclic matching, as stated in the following theorem.

    \begin{theorem}\label{theorem 2}(\cite[Proposition~3.1]{deshpande2020higher}, \cite[Lemma~4.1]{jonsson2008simplicial})
        Let $\Delta$ be a simplicial complex and $\{v_{1},v_{2},\dots, $ $v_{t}\}$ be a subset of the vertex set of $\Delta$. Let $\Delta_0 = \Delta$, and for all $i \in \left\{1,\ 2,\dots,\ t\right\}$, define
        \begin{align*}
            M_{v_{i}} &= \left\{\left(\sigma,\ \sigma \cup \left\{v_{i}\right\}\right)\ \middle|\ v_{i} \notin \sigma, \text{and}\ \sigma,\ \sigma \cup \left\{v_{i}\right\} \in \Delta_{{i-1}}\right\},\\
            N_{v_{i}} &= \left\{\sigma \in \Delta_{i-1}\ \middle|\ \sigma \in \eta\ \text{for some}\ \eta \in M_{v_{i}}\right\},\ \text{and}\\
            \Delta_i &= \Delta_{i-1} \setminus N_{v_{i}}.
        \end{align*}
        Then $\bigsqcup_{i=1}^t{M_{v_{i}}}$ is an acyclic pairing on $\Delta$.
    \end{theorem}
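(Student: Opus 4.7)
The plan is to verify two properties of $M := \bigsqcup_{i=1}^{t} M_{v_i}$: that it is a well-defined partial pairing on the face poset of $\Delta$, and that it is acyclic. The first property is the easier of the two. Each $M_{v_i}$ is itself a partial pairing on $\Delta_{i-1}$ because the assignment $\sigma \mapsto \sigma \cup \{v_i\}$ is injective and its image is disjoint from its domain (since $v_i$ is freshly added). To see that pairs from different $M_{v_j}$'s cannot share an element, I would use the nesting built into the construction: if $\sigma$ appears in a pair of $M_{v_i}$ then $\sigma \in N_{v_i}$, hence $\sigma \notin \Delta_i$, and since $\Delta_j \subseteq \Delta_i$ for every $j \geq i$, $\sigma$ can appear in no $M_{v_j}$ with $j > i$. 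Thus the sets $N_{v_1}, \ldots, N_{v_t}$ are pairwise disjoint and the disjoint union makes sense as a partial pairing on $\Delta$.

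For acyclicity I would proceed by induction on $t$. The base case is the classical fact that a single element pairing is acyclic: in any alternating chain $\alpha_1 \prec \phi(\alpha_1) \succ \alpha_2 \prec \phi(\alpha_2) \succ \cdots$ built from $M_{v_1}$, the step $\alpha_1 \prec \phi(\alpha_1)$ forces $v_1 \notin \alpha_1$ and $v_1 \in \phi(\alpha_1) = \alpha_1 \cup \{v_1\}$, while the subsequent descent $\phi(\alpha_1) \succ \alpha_2$ with $\alpha_2 \neq \alpha_1$ forces $v_1 \in \alpha_2$; then $\alpha_2$ cannot be the lower element of any pair in $M_{v_1}$, so the chain terminates and cannot close up. For the inductive step, assume $M' := \bigsqcup_{i<t} M_{v_i}$ is acyclic and consider a hypothetical cycle in $M = M' \sqcup M_{v_t}$. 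A cycle using only $M'$-pairs would contradict the inductive hypothesis, so some pair $(\alpha_j, \alpha_j \cup \{v_t\})$ must come from $M_{v_t}$. As in the base case, the descent to $\alpha_{j+1} \neq \alpha_j$ forces $v_t \in \alpha_{j+1}$, which prevents the next pair from lying in $M_{v_t}$; it must therefore come from $M'$, and in particular $\alpha_{j+1} \notin \Delta_{t-1}$. Hence the cycle alternates between $M_{v_t}$-pairs whose elements lie in $\Delta_{t-1}$ and $M'$-pairs whose elements lie outside $\Delta_{t-1}$.

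The main technical obstacle is concluding that this alternating pattern cannot close up. The route I would pursue is a direct minimal-index argument: pick a cycle in $M$, let $i^{\ast}$ be the smallest index appearing among its pair-indices, and examine a pair of $M_{v_{i^{\ast}}}$ inside the cycle. The immediately following $\alpha_{j+1}$ contains $v_{i^{\ast}}$ and must lie in $\Delta_{i_{j+1}-1} \subseteq \Delta_{i^{\ast}}$, so $\alpha_{j+1} \notin N_{v_{i^{\ast}}}$; combined with $v_{i^{\ast}} \in \alpha_{j+1}$, this forces $\alpha_{j+1} \setminus \{v_{i^{\ast}}\} \notin \Delta_{i^{\ast}-1}$, pushing the obstruction into an earlier $N_{v_m}$ with $m < i^{\ast}$. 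Iterating this implication and exploiting strict descent of indices contradicts the minimality of $i^{\ast}$. One reason the step is delicate is that each $\Delta_i$ is only a subset, not generally a subcomplex, of $\Delta$; this blocks a direct application of the Cluster (Patchwork) Lemma via the natural map $\sigma \mapsto \min\{i : \sigma \in N_{v_i}\}$, since that map need not be order-preserving, and forces one back to the concrete cycle analysis above.
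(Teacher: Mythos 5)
The paper offers no proof of this statement: it is imported verbatim from \cite[Proposition~3.1]{deshpande2020higher} and \cite[Lemma~4.1]{jonsson2008simplicial}, so there is nothing in-paper to compare against, and your proposal must stand on its own. The easy half does: the sets $N_{v_1},\dots,N_{v_t}$ are pairwise disjoint because $N_{v_i}\cap\Delta_i=\emptyset$ and $\Delta_j\subseteq\Delta_i$ for $j\geq i$, so $\bigsqcup_i M_{v_i}$ is a well-defined partial pairing; and your observation that the patchwork/cluster lemma does not apply directly, because $\sigma\mapsto\min\{i:\sigma\in N_{v_i}\}$ need not be order-preserving, is correct and genuinely useful.

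The acyclicity argument, however, has a real gap at exactly the step you flag as delicate. From the minimal index $i^{\ast}$ and a pair $(\alpha_j,\alpha_j\cup\{v_{i^{\ast}}\})\in M_{v_{i^{\ast}}}$ in the cycle you deduce $\alpha_{j+1}\setminus\{v_{i^{\ast}}\}\notin\Delta_{i^{\ast}-1}$, hence $\alpha_{j+1}\setminus\{v_{i^{\ast}}\}\in N_{v_m}$ for some $m<i^{\ast}$. But $\alpha_{j+1}\setminus\{v_{i^{\ast}}\}$ equals $\alpha_j\setminus\{x_j\}$, a face of smaller cardinality than every cycle element, so it is \emph{not} in the cycle; its being matched at an earlier stage cannot contradict the minimality of $i^{\ast}$, which is a minimum over the cycle's pair-indices only. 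The proposed iteration then wanders through auxiliary faces ever farther from the cycle and terminates harmlessly when the index reaches $1$. The repair is to track the step where $v_{i^{\ast}}$ is \emph{removed} rather than where it is added. Each step has the form $\alpha_{j+1}=\alpha_j\cup\{v_{i_j}\}\setminus\{x_j\}$ with $x_j\in\alpha_j$ and $x_j\neq v_{i_j}$, so a vertex inserted along the cycle must be deleted at some other step (the cycle returns to $\alpha_j\not\ni v_{i^{\ast}}$); choose $j'$ with $x_{j'}=v_{i^{\ast}}$, i.e. $\alpha_{j'+1}=\phi(\alpha_{j'})\setminus\{v_{i^{\ast}}\}$. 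Then $i_{j'}\neq i^{\ast}$, so $i_{j'}>i^{\ast}$ and $\phi(\alpha_{j'})\in\Delta_{i_{j'}-1}\subseteq\Delta_{i^{\ast}}$, whence $\phi(\alpha_{j'})\notin N_{v_{i^{\ast}}}$ even though $v_{i^{\ast}}\in\phi(\alpha_{j'})\in\Delta_{i^{\ast}-1}$; by definition of $M_{v_{i^{\ast}}}$ this forces $\alpha_{j'+1}=\phi(\alpha_{j'})\setminus\{v_{i^{\ast}}\}\notin\Delta_{i^{\ast}-1}$, i.e. $\alpha_{j'+1}\in N_{v_l}$ for some $l<i^{\ast}$. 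Now the obstruction lands on a genuine cycle element; since $\alpha_{j'+1}$ lies in exactly one $N_{v_l}$, its pair-index is $l<i^{\ast}$, contradicting minimality. Note this argument never uses that $\Delta$ is a simplicial complex, so your induction on $t$ (whose inductive step would anyway be troubled by the fact that $\Delta_1$ is not a complex) is not needed.
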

    
    Note that $\Delta_i$, for all $i\neq 0$, is the set of critical cells $C_{v_{_i}}$ we defined earlier. In the proof of our results, we will analyze the characteristics of the critical cells left unpaired after every element pairing.

\section{The Perfect Matching Complex of \texorpdfstring{$\left(2\times n\right)$}~ Grid Graph}\label{Section 3}
    
    In this section, we find the homotopy type of the perfect matching complex of $\left(2\times n\right)$-grid graph. Let $\mathcal{M}_p\left(\mathcal{G}_{2\times n}\right)$ denote the perfect matching complex of $2\times n$ grid graph, where $\mathcal{G}_{2\times n}$ denotes the $2\times n$ grid graph.

    For $\twongg$ the vertex set $V\left(\twongg\right)$ and edge set $E\left(\twongg\right)$ of $\twongg$ is defined as follows:
    \begin{align*}
        V\left(\twongg\right) &= \left\{p_{{i,j}}\ \middle|\ i\in\left\{1,2,\dots,n\right\} \text{ and } j\in\left\{1,2\right\}\right\};\\
        E\left(\twongg\right) &= \left\{a_{i}, b_{j},c_{j}\ \middle|\ i\in\left\{1,2,\dots,n\right\} \text{ and } j\in\left\{1,2,\dots,n-1\right\}\right\},
    \end{align*}
    \begin{figure}[H]
    \centering
    \vspace{-2em}
        \begin{tikzpicture}[line cap=round,line join=round,>=triangle 45,x=0.7cm,y=0.7cm]
            \clip(-1,-8) rectangle (16,4);
            \draw [line width=1pt] (0,3)-- (3,3);
            \draw [line width=1pt] (3,3)-- (6,3);
            \draw [line width=1pt] (6,3)-- (9,3);
            \draw [line width=1pt,dash pattern=on 5pt off 5pt] (9,3)-- (12,3);
            \draw [line width=1pt] (12,3)-- (15,3);
            \draw [line width=1pt] (15,3)-- (15,0);
            \draw [line width=1pt] (0,3)-- (0,0);
            \draw [line width=1pt] (0,0)-- (3,0);
            \draw [line width=1pt] (3,0)-- (6,0);
            \draw [line width=1pt] (6,0)-- (9,0);
            \draw [line width=1pt,dash pattern=on 5pt off 5pt] (9,0)-- (12,0);
            \draw [line width=1pt] (12,0)-- (15,0);
            \draw [line width=1pt] (3,3)-- (3,0);
            \draw [line width=1pt] (6,3)-- (6,0);
            \draw [line width=1pt] (9,3)-- (9,0);
            \draw [line width=1pt] (12,3)-- (12,0);
            \begin{scriptsize}
                \draw [fill=ududff] (0,3) circle (1.5pt);
                \draw [fill=ududff] (3,3) circle (1.5pt);
                \draw [fill=ududff] (6,3) circle (1.5pt);
                \draw [fill=ududff] (9,3) circle (1.5pt);
                \draw [fill=ududff] (12,3) circle (1.5pt);
                \draw [fill=ududff] (15,3) circle (1.5pt);
                \draw [fill=ududff] (15,0) circle (1.5pt);
                \draw [fill=ududff] (0,0) circle (1.5pt);
                \draw [fill=ududff] (3,0) circle (1.5pt);
                \draw [fill=ududff] (6,0) circle (1.5pt);
                \draw [fill=ududff] (9,0) circle (1.5pt);
                \draw [fill=ududff] (12,0) circle (1.5pt);
                
                \draw[color=ududff] (-0.2,3.4) node[anchor=center] {\large$p_{{1,2}}$};                
                \draw[color=ududff] (2.8,3.4) node[anchor=center] {\large$p_{{2,2}}$};
                \draw[color=ududff] (5.8,3.4) node[anchor=center] {\large$p_{{3,2}}$};
                \draw[color=ududff] (8.8,3.4) node[anchor=center] {\large$p_{{4,2}}$};
                \draw[color=ududff] (11.8,3.4) node[anchor=center] {\large$p_{{n-1,2}}$};
                \draw[color=ududff] (15,3.4) node[anchor=center] {\large$p_{{n,2}}$};

                \draw[color=ududff] (-0.2,-0.4) node[anchor=center] {\large$p_{{1,1}}$};
                \draw[color=ududff] (2.8,-0.4) node[anchor=center] {\large$p_{{2,1}}$};
                \draw[color=ududff] (5.8,-0.4) node[anchor=center] {\large$p_{{3,1}}$};
                \draw[color=ududff] (8.8,-0.4) node[anchor=center] {\large$p_{{4,1}}$};
                \draw[color=ududff] (11.8,-0.4) node[anchor=center] {\large\large$p_{{n-1,1}}$};
                \draw[color=ududff] (15,-0.4) node[anchor=center] {\large$p_{{n,1}}$};

                \draw[color=black] (0.4,1.5) node[anchor=center] {\large$a_{1}$};
                \draw[color=black] (3.4,1.5) node[anchor=center] {\large$a_{2}$};
                \draw[color=black] (6.4,1.5) node[anchor=center] {\large$a_{3}$};
                \draw[color=black] (9.4,1.5) node[anchor=center] {\large$a_{4}$};
                \draw[color=black] (12.75,1.5) node[anchor=center] {\large$a_{{n-1}}$};
                \draw[color=black] (15.4,1.5) node[anchor=center] {\large$a_{n}$};
                                
                \draw[color=black] (1.4,3.4) node[anchor=center] {\large$b_{1}$};
                \draw[color=black] (4.4,3.4) node[anchor=center] {\large$b_{2}$};                
                \draw[color=black] (7.4,3.4) node[anchor=center] {\large$b_{3}$};         
                \draw[color=black] (13.5,3.4) node[anchor=center] {\large$b_{{n-1}}$};                
  
                \draw[color=black] (1.4,-0.4) node[anchor=center] {\large\large$c_{1}$};                
                \draw[color=black] (4.4,-0.4) node[anchor=center] {\large$c_{2}$};                
                \draw[color=black] (7.4,-0.4) node[anchor=center] {\large$c_{3}$};                
                \draw[color=black] (13.5,-0.4) node[anchor=center] {\large$c_{{n-1}}$};
            \end{scriptsize}
        \end{tikzpicture}
        \vspace{-13em}
        \caption{Labelling in $2\times n$ Grid Graph}
        \vspace{-1em}
        \label{fig: 2n Grid Graph}
    \end{figure}

    where $a_{i}$'s are the vertical edges with the endpoints $p_{{i,1}}$ and $p_{{i,2}}$; $b_{j}$'s and $c_{j}$'s, are the horizontal edges with the endpoints of the edge $b_{j}$, are $p_{{j,2}}$ and $p_{{j+1,2}}$ and the endpoints of the edge $c_{j}$, are $p_{{j,1}}$ and $p_{{j+1,1}}$ (see \Cref{fig: 2n Grid Graph}).

    In order to find the homotopy type of $\mathcal{M}_p\left(\mathcal{G}_{2\times n}\right)$, we are going to need the following two lemmas:

    \begin{lemma}\label{Lemma 1}
    A matching $\pi$ of $\twongg$ is a bad matching if and only if it has either of the following forms,

    \begin{tasks}[label-offset = 1em, label = \text{X}\arabic*.](2)
        \task\label{X1} $\left\{b_{i},\ c_{{i+1}}\right\}$, for all $1\leq i\leq n-2$.
        \vspace{-5em}
        \begin{figure}[H]
            \captionsetup{oneside,margin={-0.2cm,0.5cm}}
            \hspace{-2em}
            \begin{tikzpicture}[line cap=round,line join=round,>=triangle 45,x=1.2cm,y=1.2cm]
                \clip(0,0) rectangle (6,4);
                    \draw [line width=1pt,dash pattern=on 5pt off 5pt] (5,2)-- (4,2);
                    \draw [line width=1pt] (4,2)-- (3,2);
                    \draw [line width=1pt,color=ffqqqq] (3,2)-- (2,2);
                    \draw [line width=1pt] (2,2)-- (2,1);
                    \draw [line width=1pt] (2,1)-- (3,1);
                    \draw [line width=1pt,color=ffqqqq] (3,1)-- (4,1);
                    \draw [line width=1pt,dash pattern=on 5pt off 5pt] (4,1)-- (5,1);
                    \draw [line width=1pt] (4,2)-- (4,1);
                    \draw [line width=1pt] (3,2)-- (3,1);
                    \draw [line width=1pt,dash pattern=on 5pt off 5pt] (2,2)-- (1,2);
                    \draw [line width=1pt,dash pattern=on 5pt off 5pt] (2,1)-- (1,1);
                    \draw (2.5,2.25) node[anchor=center] {$b_{{i}}$};
                    \draw (3.5,1.2) node[anchor=center] {$c_{{i+1}}$};
                    \begin{scriptsize}
                        \draw [fill=wwwwww] (5,2) circle (1.5pt);
                        \draw [fill=wwwwww] (4,2) circle (1.5pt);
                        \draw [fill=wwwwww] (3,2) circle (1.5pt);
                        \draw [fill=wwwwww] (2,2) circle (1.5pt);
                        \draw [fill=wwwwww] (1,2) circle (1.5pt);
                        \draw [fill=wwwwww] (2,1) circle (1.5pt);
                        \draw [fill=wwwwww] (3,1) circle (1.5pt);
                        \draw [fill=wwwwww] (4,1) circle (1.5pt);
                        \draw [fill=wwwwww] (5,1) circle (1.5pt);
                        \draw [fill=wwwwww] (1,1) circle (1.5pt);
                    \end{scriptsize}
                \end{tikzpicture}
            \vspace{-3em}
            \caption{The set given in X1.}
            \label{fig: X1}
        \end{figure}
            
        \task\label{X2} $\left\{c_{i},\ b_{{i+1}}\right\}$, for all $1\leq i\leq n-2$.
        \vspace{-5em}
        \begin{figure}[H]
            \captionsetup{oneside,margin={-0.2cm,0.5cm}}
            \hspace{-2em}\begin{tikzpicture}[line cap=round,line join=round,>=triangle 45,x=1.2cm,y=1.2cm]
                \clip(0,0) rectangle (6,4);
                    \draw [line width=1pt,dash pattern=on 5pt off 5pt] (5,2)-- (4,2);
                    \draw [line width=1pt,color=ffqqqq] (4,2)-- (3,2);
                    \draw [line width=1pt] (3,2)-- (2,2);
                    \draw [line width=1pt] (2,2)-- (2,1);
                    \draw [line width=1pt,color=ffqqqq] (2,1)-- (3,1);
                    \draw [line width=1pt] (3,1)-- (4,1);
                    \draw [line width=1pt,dash pattern=on 5pt off 5pt] (4,1)-- (5,1);
                    \draw [line width=1pt] (4,2)-- (4,1);
                    \draw [line width=1pt] (3,2)-- (3,1);
                    \draw [line width=1pt,dash pattern=on 5pt off 5pt] (2,2)-- (1,2);
                    \draw [line width=1pt,dash pattern=on 5pt off 5pt] (2,1)-- (1,1);
                    \draw (3.5,2.25) node[anchor=center] {$b_{{i+1}}$};
                    \draw (2.5,1.2) node[anchor=center] {$c_{{i}}$};
                    \begin{scriptsize}
                        \draw [fill=wwwwww] (5,2) circle (1.5pt);
                        \draw [fill=wwwwww] (4,2) circle (1.5pt);
                        \draw [fill=wwwwww] (3,2) circle (1.5pt);
                        \draw [fill=wwwwww] (2,2) circle (1.5pt);
                        \draw [fill=wwwwww] (1,2) circle (1.5pt);
                        \draw [fill=wwwwww] (2,1) circle (1.5pt);
                        \draw [fill=wwwwww] (3,1) circle (1.5pt);
                        \draw [fill=wwwwww] (4,1) circle (1.5pt);
                        \draw [fill=wwwwww] (5,1) circle (1.5pt);
                        \draw [fill=wwwwww] (1,1) circle (1.5pt);
                    \end{scriptsize}
                \end{tikzpicture}
            \vspace{-3em}
            \caption{The set given in X2.}
            \label{fig: X2}
            \end{figure}
        \end{tasks}
    \end{lemma}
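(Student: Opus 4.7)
I prove the biconditional by induction on $n$, with base cases $n \le 3$ verified by direct enumeration.

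For the ``if'' direction, I argue that $\pi = \{b_i, c_{i+1}\}$ is non-extending by checking that the four used vertices make column $i+1$ fully occupied, disconnecting the remaining graph $H_\pi$ into a ``left part'' (columns $1, \ldots, i$ minus $p_{i,2}$) of $2i-1$ vertices and a ``right part'' of $2n-2i-3$ vertices; both are odd-sized and admit no perfect matching because $\mathcal{G}_{2 \times n}$ is bipartite and an odd-sized subgraph has unbalanced color classes. Minimality follows because each of $\{b_i\}$ and $\{c_{i+1}\}$ alone extends by adjoining the complementary horizontal edge in its column pair plus the vertical edges $a_k$ elsewhere. The configuration $\{c_i, b_{i+1}\}$ is handled symmetrically.

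For the ``only if'' direction, let $\pi$ be a bad matching of $\mathcal{G}_{2 \times n}$. In \emph{Step 1}, I show $\pi$ contains no vertical edge $a_i$: such an edge fully uses column $i$, so $\pi \setminus \{a_i\} = \pi_L \sqcup \pi_R$ where $\pi_L, \pi_R$ are matchings in the left and right subgrids, and $H_\pi$ splits accordingly into two disconnected pieces. Non-extendability of $\pi$ forces, WLOG, $\pi_L$ not to extend in $\mathcal{G}_{2 \times (i-1)}$; by the inductive hypothesis $\pi_L$ contains a sub-matching $\tau$ of X1/X2 form, and the same odd-component obstruction shows $\tau$ remains non-extending in the full grid, so $\tau \subsetneq \pi$ contradicts minimality of $\pi$.

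In \emph{Step 2}, with $\pi$ consisting only of $b$- and $c$-edges, let $B^\star$ and $C^\star$ denote the sets of columns whose top, respectively bottom, vertex is used by $\pi$. I claim $B^\star \cap C^\star \ne \emptyset$. Otherwise, I extend $\pi$ explicitly by adjoining $c_{j_l}$ for each $b_{j_l} \in \pi$, $b_{k_m}$ for each $c_{k_m} \in \pi$, and $a_i$ for each column $i$ in neither $B^\star$ nor $C^\star$; the disjointness hypothesis $B^\star \cap C^\star = \emptyset$ together with the pairwise non-overlap of $b$-dominoes (and of $c$-dominoes) within $\pi$ makes the resulting collection pairwise vertex-disjoint, and a direct count shows it covers every previously uncovered vertex — contradicting $\pi$ being non-extending. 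So some column lies in $B^\star \cap C^\star$, which forces $\pi$ to contain $\{b_j, c_k\}$ with $|j-k| \le 1$. Finally in \emph{Step 3}, if $|j-k| = 0$, i.e., $\{b_j, c_j\} \subseteq \pi$, then columns $j, j+1$ are fully used and $\pi$ splits as $\{b_j, c_j\} \sqcup \pi_L \sqcup \pi_R$; since $\pi$ does not extend, non-extendability propagates to one of the subgrid matchings, and induction again yields a bad proper sub-matching of $\pi$, contradicting minimality. Thus $|j-k| = 1$, so $\pi$ contains X1 or X2 as a sub-matching; this sub-matching is itself bad by the ``if'' direction, so minimality forces equality.

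The main obstacle I anticipate is the explicit extension in Step 2 — carefully handling all configurations of the $b$- and $c$-dominoes (boundary columns, runs of same-type dominoes, and the degenerate cases where $\pi$ consists only of $b$-edges or only of $c$-edges) and verifying pairwise disjointness and complete coverage of the proposed extension. The bookkeeping is elementary but must be done with care.
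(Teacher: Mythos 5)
Your proof is correct, and it takes a genuinely different route from the paper's. The paper argues the converse directly, without induction: assuming $\pi$ is bad but contains no X1/X2 pattern, it looks at the connected components of the subgraph induced on the uncovered vertices, locates the rightmost column $a_j$ meeting a component, and runs a case analysis (one vs.\ both vertices of $a_j$ uncovered, then sub-cases on which edges of $\pi$ produce that local picture) to sweep leftward and perfectly match each component, the absence of X1/X2 being exactly what rules out the obstructing sub-cases. You instead induct on $n$ and exploit the two configurations that disconnect the grid --- a vertical edge $a_i\in\pi$, or a pair $\{b_j,c_j\}\subseteq\pi$ --- to push non-extendability into a strictly smaller grid and contradict minimality; this reduces everything to a purely horizontal matching with $B^\star\cap C^\star=\emptyset$, for which you exhibit an explicit perfect matching ($c_j$ opposite each $b_j$, $b_k$ opposite each $c_k$, and $a_i$ on untouched columns), whose disjointness and coverage follow cleanly from $B^\star\cap C^\star=\emptyset$ and $\pi$ being a matching. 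Your Step~2 construction is arguably more checkable than the paper's ``continue until all vertical edges are exhausted'' sweep, at the cost of carrying an induction and verifying the splitting equivalences (that $\pi$ extends iff its restrictions to the two sub-grids extend) and the transfer of X1/X2 badness from a sub-grid to the full grid --- both of which do hold and which you correctly flag. The only bookkeeping worth making explicit in a full write-up is the degenerate boundary of the splits (empty sub-grids, $\pi_L$ or $\pi_R$ empty), but these all resolve in your favor.
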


    \begin{proof}
        We first show that if the matching $\pi$ is $\left\{b_{i},\ c_{{i+1}}\right\}$ or $\left\{c_{i},\ b_{{i+1}}\right\}$, for some $1\leq i \leq n-2$, then $\pi$ is a bad matching. Note that there will be an odd number of vertices on the left and right-hand sides of the edges in both of these matchings, which cannot be covered using any possible choice of edges. Therefore, these are bad matchings.
    
        Conversely, we now show that if $\pi$ is bad matching of $\twongg$ then either $\pi = \left\{b_{i},\ c_{{i+1}}\right\}$ or $\pi = \left\{c_{i},\ b_{{i+1}}\right\}$, for some $1\leq i \leq n-2$. Let us assume to the contrary that, $\pi = \left\{x_{1}, x_{2},\dots, x_{t}\right\}$, where $x_{i} \in E$, for all $1\leq i \leq t$ is a bad matching of $\twongg$ other than the form \hyperref[X1]{X1} or \hyperref[X2]{X2}. If $V_{\pi}$ denote the vertices covered by the edges in $\pi$, then $\twongg \left[V\setminus V_{\pi}\right]$ will give us non-empty connected components, say, $L_1,L_2,\dots,L_s$.

        Let $j$ be the largest index such that a vertex of $a_j$ is present in $L_1$, \textit{i.e.}, no vertex of $a_{j+1}$ is in $L_1$. In other words, at least one of the following holds:  

        \begin{tasks}[label = \Alph*.](3)
            \task\label{A} $p_{{j,1}} \in L_1,\ p_{{j,2}} \notin L_1$;
            \begin{figure}[H]
            \captionsetup{oneside,margin={0cm,0.5cm}}
            \hspace{-1.5em}
                \begin{tikzpicture}[line cap=round,line join=round,>=triangle 45,x=0.35cm,y=0.35cm]                  \clip(-0.2,-1) rectangle (12.2,4);
                    \draw [line width=1pt,dash pattern=on 5pt off 5pt] (0,3)-- (3,3);
                    \draw [line width=1pt] (3,3)-- (6,3);
                    \draw [line width=1pt] (6,3)-- (9,3);
                    \draw [line width=1pt,dash pattern=on 5pt off 5pt] (9,3)-- (12,3);
                    \draw [line width=1pt,dash pattern=on 5pt off 5pt] (0,0)-- (3,0);
                    \draw [line width=1pt] (3,0)-- (6,0);
                    \draw [line width=1pt] (6,0)-- (9,0);
                    \draw [line width=1pt,dash pattern=on 5pt off 5pt] (9,0)-- (12,0);
                    \draw [line width=1pt] (3,3)-- (3,0);
                    \draw [line width=1pt] (6,3)-- (6,0);
                    \draw [line width=1pt] (9,3)-- (9,0);
                    \draw [rotate around={90:(6,1.5)},line width=1.5pt,dash pattern=on 5pt off 5pt] (6,1.5) ellipse (0.8cm and 0.4cm);
                    \begin{scriptsize}
                        \draw [fill=wwwwww] (0,3) circle (2pt);
                        \draw [fill=wwwwww] (3,3) circle (2pt);
                        \draw [fill=ffqqqq] (6,3) circle (2pt);
                        \draw [fill=ffqqqq] (9,3) circle (2pt);
                        \draw [fill=wwwwww] (12,3) circle (2pt);
                        \draw [fill=wwwwww] (0,0) circle (2pt);
                        \draw [fill=wwwwww] (3,0) circle (2pt);
                        \draw [fill=qqqqff] (6,0) circle (2pt);
                        \draw [fill=ffqqqq] (9,0) circle (2pt);
                        \draw [fill=wwwwww] (12,0) circle (2pt);
                    \end{scriptsize}
                \end{tikzpicture}
                \caption{Case \hyperref[A]{A}.}
                \label{fig: Case A}
            \end{figure}
        
            \task\label{B} $p_{{j,2}} \in L_1,\ p_{{j,1}} \notin L_1$;
            \begin{figure}[H]
            \captionsetup{oneside,margin={-0.2cm,0.5cm}}
                \hspace{-1.5em}\begin{tikzpicture}[line cap=round,line join=round,>=triangle 45,x=0.35cm,y=0.35cm]                  \clip(-0.2,-1) rectangle (12.2,4);
                    \draw [line width=1pt,dash pattern=on 5pt off 5pt] (0,3)-- (3,3);
                    \draw [line width=1pt] (3,3)-- (6,3);
                    \draw [line width=1pt] (6,3)-- (9,3);
                    \draw [line width=1pt,dash pattern=on 5pt off 5pt] (9,3)-- (12,3);
                    \draw [line width=1pt,dash pattern=on 5pt off 5pt] (0,0)-- (3,0);
                    \draw [line width=1pt] (3,0)-- (6,0);
                    \draw [line width=1pt] (6,0)-- (9,0);
                    \draw [line width=1pt,dash pattern=on 5pt off 5pt] (9,0)-- (12,0);
                    \draw [line width=1pt] (3,3)-- (3,0);
                    \draw [line width=1pt] (6,3)-- (6,0);
                    \draw [line width=1pt] (9,3)-- (9,0);
                    \draw [rotate around={90:(6,1.5)},line width=1.5pt,dash pattern=on 5pt off 5pt] (6,1.5) ellipse (0.8cm and 0.4cm);
                    \begin{scriptsize}
                        \draw [fill=wwwwww] (0,3) circle (2pt);
                        \draw [fill=wwwwww] (3,3) circle (2pt);
                        \draw [fill=qqqqff] (6,3) circle (2pt);
                        \draw [fill=ffqqqq] (9,3) circle (2pt);
                        \draw [fill=wwwwww] (12,3) circle (2pt);
                        \draw [fill=wwwwww] (0,0) circle (2pt);
                        \draw [fill=wwwwww] (3,0) circle (2pt);
                        \draw [fill=ffqqqq] (6,0) circle (2pt);
                        \draw [fill=ffqqqq] (9,0) circle (2pt);
                        \draw [fill=wwwwww] (12,0) circle (2pt);
                    \end{scriptsize}
                \end{tikzpicture}
                \caption{Case \hyperref[B]{B}.}
                \label{fig: Case B}
            \end{figure}
        
            \task\label{C} $p_{{j,1}},\ p_{{j,2}} \in L_1$.
            \begin{figure}[H]
            \captionsetup{oneside,margin={-0.2cm,0.5cm}}
                \hspace{-1.5em}\begin{tikzpicture}[line cap=round,line join=round,>=triangle 45,x=0.35cm,y=0.35cm]                  \clip(-0.2,-1) rectangle (12.2,4);
                    \draw [line width=1pt,dash pattern=on 5pt off 5pt] (0,3)-- (3,3);
                    \draw [line width=1pt] (3,3)-- (6,3);
                    \draw [line width=1pt] (6,3)-- (9,3);
                    \draw [line width=1pt,dash pattern=on 5pt off 5pt] (9,3)-- (12,3);
                    \draw [line width=1pt,dash pattern=on 5pt off 5pt] (0,0)-- (3,0);
                    \draw [line width=1pt] (3,0)-- (6,0);
                    \draw [line width=1pt] (6,0)-- (9,0);
                    \draw [line width=1pt,dash pattern=on 5pt off 5pt] (9,0)-- (12,0);
                    \draw [line width=1pt] (3,3)-- (3,0);
                    \draw [line width=1pt] (6,3)-- (6,0);
                    \draw [line width=1pt] (9,3)-- (9,0);
                    \draw [rotate around={90:(6,1.5)},line width=1.5pt,dash pattern=on 5pt off 5pt] (6,1.5) ellipse (0.8cm and 0.4cm);
                    \begin{scriptsize}
                        \draw [fill=wwwwww] (0,3) circle (2pt);
                        \draw [fill=wwwwww] (3,3) circle (2pt);
                        \draw [fill=qqqqff] (6,3) circle (2pt);
                        \draw [fill=ffqqqq] (9,3) circle (2pt);
                        \draw [fill=wwwwww] (12,3) circle (2pt);
                        \draw [fill=wwwwww] (0,0) circle (2pt);
                        \draw [fill=wwwwww] (3,0) circle (2pt);
                        \draw [fill=qqqqff] (6,0) circle (2pt);
                        \draw [fill=ffqqqq] (9,0) circle (2pt);
                        \draw [fill=wwwwww] (12,0) circle (2pt);
                    \end{scriptsize}
                \end{tikzpicture}
                \caption{Case \hyperref[C]{C}.}
                \label{fig: Case C}
            \end{figure}
        \end{tasks}
    Here, the red-coloured vertices are covered by some edge, the blue-coloured vertex is uncovered, and the marked edge is $a_j$. We claim that every vertex of $L_1$ can be covered, \textit{i.e.}, we can get a perfect matching of $L_1$. Before proving that, first note that case \hyperref[B]{B} is equivalent to case \hyperref[A]{A}, and hence, a similar argument used for case \hyperref[A]{A} will work for case \hyperref[B]{B} also. For the case \hyperref[C]{C}, look at $a_{j-1}$. If at least one vertex is covered, then it can be considered either as case \hyperref[A]{A} or \hyperref[B]{B}, and we proceed according to the argument we provided for them. If both vertices are uncovered, look at $a_{j-2}$ and continue until all the vertical edges (\textit{i.e.}, $a_k$'s) in $L_1$ are exhausted. Thus, we only need to analyze case \hyperref[A]{A}.

    Considering \hyperref[A]{A}, note that to obtain \hyperref[A]{A}, both the vertices of $a_{j+1}$ must be covered along with the vertex $p_{{j,2}}$. This can be done in the following three ways:
    
    \begin{tasks}[label-offset = 1em, label = \text{A}\arabic*.](2)
        \task\label{A1} $b_{{j}},c_{{j+1}}\in \pi$.
        \begin{figure}[H]
            \hspace{1em}\begin{tikzpicture}[line cap=round,line join=round,>=triangle 45,x=0.35cm,y=0.35cm]                  
            \clip(-0.25,-1) rectangle (15.2,4);
                \draw [line width=1pt,dash pattern=on 5pt off 5pt] (0,3)-- (3,3);
                \draw [line width=1pt] (3,3)-- (6,3);
                \draw [line width=1pt,color=ffqqqq] (6,3)-- (9,3);
                \draw [line width=1pt] (9,3)-- (12,3);
                \draw [line width=1pt,dash pattern=on 5pt off 5pt] (0,0)-- (3,0);
                \draw [line width=1pt] (3,0)-- (6,0);
                \draw [line width=1pt] (6,0)-- (9,0);
                \draw [line width=1pt,color=ffqqqq] (9,0)-- (12,0);
                \draw [line width=1pt] (3,3)-- (3,0);
                \draw [line width=1pt] (6,3)-- (6,0);
                \draw [line width=1pt] (9,3)-- (9,0);
                \draw [line width=1pt,dash pattern=on 5pt off 5pt] (12,0)-- (15,0);
                \draw [line width=1pt,dash pattern=on 5pt off 5pt] (12,3)-- (15,3);
                \draw [line width=1pt] (12,3)-- (12,0);
                \draw [rotate around={90:(6,1.5)},line width=1.5pt,dash pattern=on 5pt off 5pt] (6,1.5) ellipse (0.8cm and 0.4cm);
                \begin{scriptsize}
                    \draw [fill=wwwwww] (0,3) circle (2pt);
                    \draw [fill=wwwwww] (3,3) circle (2pt);
                    \draw [fill=ffqqqq] (6,3) circle (2pt);
                    \draw [fill=ffqqqq] (9,3) circle (2pt);
                    \draw [fill=wwwwww] (12,3) circle (2pt);
                    \draw [fill=wwwwww] (0,0) circle (2pt);
                    \draw [fill=wwwwww] (3,0) circle (2pt);
                    \draw [fill=qqqqff] (6,0) circle (2pt);
                    \draw [fill=ffqqqq] (9,0) circle (2pt);
                    \draw [fill=ffqqqq] (12,0) circle (2pt);
                    \draw [fill=wwwwww] (15,0) circle (2pt);
                    \draw [fill=wwwwww] (15,3) circle (2pt);
                \end{scriptsize}
            \end{tikzpicture}
            \caption{Case \hyperref[A1]{A1}.}
            \label{fig: Case A1}
        \end{figure}
        
        \task\label{A2} $b_{{j-1}},a_{{j+1}}\in \pi$.

        \begin{figure}[H]
            \hspace{1em}\begin{tikzpicture}[line cap=round,line join=round,>=triangle 45,x=0.35cm,y=0.35cm]         
            \clip(-0.25,-1) rectangle (15.2,4);
                \draw [line width=1pt,dash pattern=on 5pt off 5pt] (0,3)-- (3,3);
                \draw [line width=1pt,color=ffqqqq] (3,3)-- (6,3);
                \draw [line width=1pt] (6,3)-- (9,3);
                \draw [line width=1pt] (9,3)-- (12,3);
                \draw [line width=1pt,dash pattern=on 5pt off 5pt] (0,0)-- (3,0);
                \draw [line width=1pt] (3,0)-- (6,0);
                \draw [line width=1pt] (6,0)-- (9,0);
                \draw [line width=1pt] (9,0)-- (12,0);
                \draw [line width=1pt] (3,3)-- (3,0);
                \draw [line width=1pt] (6,3)-- (6,0);
                \draw [line width=1pt,color=ffqqqq] (9,3)-- (9,0);
                \draw [line width=1pt,dash pattern=on 5pt off 5pt] (12,0)-- (15,0);
                \draw [line width=1pt,dash pattern=on 5pt off 5pt] (12,3)-- (15,3);
                \draw [line width=1pt] (12,3)-- (12,0);
                \draw [rotate around={90:(6,1.5)},line width=1.5pt,dash pattern=on 5pt off 5pt] (6,1.5) ellipse (0.8cm and 0.4cm);
                \begin{scriptsize}
                    \draw [fill=wwwwww] (0,3) circle (2pt);
                    \draw [fill=ffqqqq] (3,3) circle (2pt);
                    \draw [fill=ffqqqq] (6,3) circle (2pt);
                    \draw [fill=ffqqqq] (9,3) circle (2pt);
                    \draw [fill=wwwwww] (12,3) circle (2pt);
                    \draw [fill=wwwwww] (0,0) circle (2pt);
                    \draw [fill=wwwwww] (3,0) circle (2pt);
                    \draw [fill=qqqqff] (6,0) circle (2pt);
                    \draw [fill=ffqqqq] (9,0) circle (2pt);
                    \draw [fill=wwwwww] (12,0) circle (2pt);
                    \draw [fill=wwwwww] (15,0) circle (2pt);
                    \draw [fill=wwwwww] (15,3) circle (2pt);
                \end{scriptsize}
            \end{tikzpicture}
            \caption{Case \hyperref[A2]{A2}.}
            \label{fig: Case A2}
        \end{figure}
        
        \task\label{A3} $b_{{j-1}},b_{{j+1}},c_{{j+1}} \in \pi$.
        \begin{figure}[H]
            \hspace{1em}\begin{tikzpicture}[line cap=round,line join=round,>=triangle 45,x=0.35cm,y=0.35cm]         
            \clip(-0.25,-1) rectangle (15.2,4);
                \draw [line width=1pt,dash pattern=on 5pt off 5pt] (0,3)-- (3,3);
                \draw [line width=1pt,color=ffqqqq] (3,3)-- (6,3);
                \draw [line width=1pt] (6,3)-- (9,3);
                \draw [line width=1pt,color=ffqqqq] (9,3)-- (12,3);
                \draw [line width=1pt,dash pattern=on 5pt off 5pt] (0,0)-- (3,0);
                \draw [line width=1pt] (3,0)-- (6,0);
                \draw [line width=1pt] (6,0)-- (9,0);
                \draw [line width=1pt,color=ffqqqq] (9,0)-- (12,0);
                \draw [line width=1pt] (3,3)-- (3,0);
                \draw [line width=1pt] (6,3)-- (6,0);
                \draw [line width=1pt] (9,3)-- (9,0);
                \draw [line width=1pt,dash pattern=on 5pt off 5pt] (12,0)-- (15,0);
                \draw [line width=1pt,dash pattern=on 5pt off 5pt] (12,3)-- (15,3);
                \draw [line width=1pt] (12,3)-- (12,0);
                \draw [rotate around={90:(6,1.5)},line width=1.5pt,dash pattern=on 5pt off 5pt] (6,1.5) ellipse (0.8cm and 0.4cm);
                \begin{scriptsize}
                    \draw [fill=wwwwww] (0,3) circle (2pt);
                    \draw [fill=ffqqqq] (3,3) circle (2pt);
                    \draw [fill=ffqqqq] (6,3) circle (2pt);
                    \draw [fill=ffqqqq] (9,3) circle (2pt);
                    \draw [fill=ffqqqq] (12,3) circle (2pt);
                    \draw [fill=wwwwww] (0,0) circle (2pt);
                    \draw [fill=wwwwww] (3,0) circle (2pt);
                    \draw [fill=qqqqff] (6,0) circle (2pt);
                    \draw [fill=ffqqqq] (9,0) circle (2pt);
                    \draw [fill=ffqqqq] (12,0) circle (2pt);
                    \draw [fill=wwwwww] (15,0) circle (2pt);
                    \draw [fill=wwwwww] (15,3) circle (2pt);
                \end{scriptsize}
            \end{tikzpicture}
            \caption{Case \hyperref[A3]{A3}.}
            \label{fig: Case A3}
        \end{figure}
    \end{tasks}
    
    Clearly, \hyperref[A1]{A1} is not possible since then $\pi$ will contain a subset of the form \hyperref[X1]{X1}. For \hyperref[A2]{A2} and \hyperref[A3]{A3}, we look at $a_{j-2}$. This is because the vertex $p_{j-1,1}$ must remain uncovered; otherwise $\pi$ will contain a subset of the form \hyperref[X2]{X2}, since the only possible edge left to cover $p_{j-1,1}$ will be $c_{j-2}$ and we have $b_{j-1} \in \pi$. Now, if at least one vertex of $a_{j-2}$ is covered, then it can be considered as case \hyperref[A]{A} or \hyperref[B]{B}, and proceed according to the argument provided for them. If both the vertices are uncovered, we look at $a_{j-3}$ and continue the process until all the vertical edges in $L_1$ are exhausted.

     Observe that this exhaustive method will give us pairs of uncovered vertices that can be covered using vertical and horizontal edges, \textit{i.e.}, all the vertices of $L_1$ will get covered and give us perfect matching in $L_1$. Applying the same process to the rest of the components will lead us to conclude that the matching $\pi = \left\{x_{1}, x_{2},\dots, x_{t}\right\}$, can be extended to form a perfect matching, which is a contradiction to our assumption about $\pi$ being a bad matching. Thus, a bad matching of $\twongg$ only has the form \hyperref[X1]{X1} or \hyperref[X2]{X2}.
\end{proof}

    \begin{lemma}\label{Lemma 2}
        Let $\tau\in \mathcal{M}_p\left(\mathcal{G}_{2\times n}\right)$ be a \textit{facet}, \textit{i.e.}, $\tau$ is a perfect matching of $\mathcal{G}_{2\times n}$. For any $i\in \left\{1,2,\dots,n-1\right\}$, $b_{i} \in \tau$ if and only if $c_{i} \in \tau$.
    \end{lemma}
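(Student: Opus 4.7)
The plan is to reduce this to Lemma~\ref{Lemma 1}, using the elementary observation that a perfect matching of $\twongg$ cannot contain any bad matching as a subset (since any matching contained in the perfect matching $\tau$ is trivially extendable to $\tau$, hence not bad). By the symmetry of the two rows of $\twongg$, the forward implication ``$b_i \in \tau \Rightarrow c_i \in \tau$'' and its converse are established by the same argument, so I will only spell out the forward one.

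Assume $b_i \in \tau$. Then the vertices $p_{i,2}$ and $p_{i+1,2}$ are both covered by $b_i$, which forces $a_i, a_{i+1} \notin \tau$; otherwise one of these vertices would be incident to two edges of $\tau$, contradicting $\tau$ being a matching. Next, since $\tau$ is a perfect matching, the vertex $p_{i,1}$ must also be covered. The edges of $\twongg$ incident to $p_{i,1}$ are $a_i$, $c_i$, and (if $i \geq 2$) $c_{i-1}$. Because $a_i \notin \tau$, we must have $c_i \in \tau$ or $c_{i-1} \in \tau$.

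Suppose for contradiction that $c_i \notin \tau$. If $i = 1$, then $p_{1,1}$ has no other incident edge available to cover it, an immediate contradiction. If $i \geq 2$, then $c_{i-1} \in \tau$, so the two-element set $\{c_{i-1}, b_i\} \subseteq \tau$ has exactly the form \hyperref[X2]{X2} of Lemma~\ref{Lemma 1} (with index $i-1$), hence is a bad matching. But a bad matching cannot be extended to a perfect matching, whereas $\tau$ is itself a perfect matching containing $\{c_{i-1}, b_i\}$, again a contradiction. Thus $c_i \in \tau$.

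For the converse, swapping the roles of the two rows (i.e. reflecting $\twongg$ about its horizontal axis, which exchanges each $b_j$ with $c_j$ and fixes each $a_j$) carries $\tau$ to another perfect matching, and the same argument applied there gives $c_i \in \tau \Rightarrow b_i \in \tau$. There is no real obstacle beyond the minor boundary bookkeeping when $i=1$ or $i = n-1$; the heart of the proof is simply recognizing the configuration $\{c_{i-1}, b_i\}$ (or dually $\{b_i, c_{i+1}\}$) as a forbidden bad matching.
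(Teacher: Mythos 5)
Your proof is correct and follows essentially the same route as the paper's: to cover $p_{i,1}$ one needs $a_i$ or $c_{i-1}$, the former is excluded by $b_i\in\tau$, and the latter would put the bad matching $\left\{c_{i-1},b_i\right\}$ of form \hyperref[X2]{X2} inside the perfect matching $\tau$. Your explicit handling of the boundary case $i=1$ and the row-swap symmetry for the converse are minor elaborations of the same argument.
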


    \begin{proof}
        It is given that $\tau$ is a perfect matching of $\mathcal{G}_{2\times n}$ and let $b_{i} \in \tau$, for some $i\in \left\{1,2,\dots,n-1\right\}$. Let us assume that $c_{i} \notin \tau$. Now, in order to cover $p_{{i,1}}$, we must have either $a_{i}$ or $c_{{i-1}}$ in $\tau$. Clearly, $a_{i} \notin \tau$ as $b_{i} \in \tau$. Moreover, if $c_{{i-1}} \in \tau$, then $\tau$ will contain a subset of the form \hyperref[X2]{X2}, contradicting $\tau$ being a perfect matching. The converse part follows from a similar argument.
    \end{proof}

\subsection{The homotopy type of \texorpdfstring{$(2\times n)$ }~Grid Graph}
    \begin{theorem}\label{thm: 2n grid graph homotopy}
        The perfect matching complex of  $\left(2\times n\right)$ grid graph, $\mathcal{M}_p \left(\mathcal{G}_{2\times n}\right)$ is \textit{contractible}, when $n$ is odd and is homotopy equivalent to $\mathbb{S}^k$, when $n$ is even and written as $n=2k+2$, \textit{i.e.},

        $$\pmcgg \simeq \begin{cases}
            *, & \text{if n is odd};\\
            \mathbb{S}^k, & \text{if n is even and } n=2k+2.
        \end{cases}$$
    \end{theorem}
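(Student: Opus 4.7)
My plan is to recognize $\pmcgg$ as the independence complex of an auxiliary graph, then apply the fold lemma repeatedly to obtain a suspension-type recurrence and conclude by induction.

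By \Cref{Lemma 1}, a matching of $\twongg$ extends to a perfect matching if and only if it contains no pair of the form $\{b_i, c_{i+1}\}$ or $\{c_i, b_{i+1}\}$. Since these forbidden pairs have size two, $\pmcgg$ equals $\mathrm{Ind}(H)$, where $H$ is the graph on vertex set $E(\twongg)$ whose edges connect two elements of $E(\twongg)$ that either share a vertex in $\twongg$ or together form a bad matching. A direct inspection gives
\[
N_H(b_i) \;=\; N_H(c_i) \;=\; \{a_i, a_{i+1}, b_{i-1}, b_{i+1}, c_{i-1}, c_{i+1}\}
\]
(boundary indices suppressed), and $b_i c_i \notin E(H)$; hence $b_i$ and $c_i$ are nonadjacent twins in $H$, which is the graph-theoretic reformulation of \Cref{Lemma 2}.

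Since $N_H(b_i) \subseteq N_H(c_i)$, the fold lemma permits deleting $b_i$ without changing the homotopy type of $\mathrm{Ind}(H)$. The twin relation persists after each deletion, so deleting $b_1, \dots, b_{n-1}$ in sequence produces a graph $H'$ on $\{a_1, \dots, a_n, c_1, \dots, c_{n-1}\}$ with edges exactly $a_i c_{i-1}$, $a_i c_i$, and $c_i c_{i+1}$ (valid indices only). In $H'$ one has $N_{H'}(a_1) = \{c_1\} \subseteq N_{H'}(a_2) = \{c_1, c_2\}$, so another application of the fold lemma removes $a_2$; then $\{c_1\} = N(a_1) \subseteq N(c_2) = \{a_3, c_1, c_3\}$, so a further fold removes $c_2$ (these last two folds assume $n \geq 4$; the cases $n \in \{1,2,3\}$ are verified directly as base cases). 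After these folds, $\{a_1, c_1\}$ forms an isolated edge component, while the induced subgraph on $\{a_3, \dots, a_n, c_3, \dots, c_{n-1}\}$ is isomorphic, via the reindexing $a_{i+2} \mapsto a_i$ and $c_{j+2} \mapsto c_j$, to the analogue of $H'$ built from $\mathcal{G}_{2 \times (n-2)}$.

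Applying the join formula $\mathrm{Ind}(G_1 \sqcup G_2) \simeq \mathrm{Ind}(G_1) \ast \mathrm{Ind}(G_2)$ together with $\mathrm{Ind}(\text{single edge}) \simeq \mathbb{S}^0$, and reversing the same fold procedure on the smaller grid, yields the recurrence
\[
\pmcgg \;\simeq\; \mathbb{S}^0 \ast \mathcal{M}_p\left(\mathcal{G}_{2 \times (n-2)}\right) \;\simeq\; \Sigma \, \mathcal{M}_p\left(\mathcal{G}_{2 \times (n-2)}\right).
\]
Iterating from the base cases $\mathcal{M}_p(\mathcal{G}_{2 \times 1}) \simeq \{*\}$ and $\mathcal{M}_p(\mathcal{G}_{2 \times 2}) \simeq \mathbb{S}^0$ gives contractibility for odd $n$ and $\mathbb{S}^k$ for $n = 2k+2$, as claimed. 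The main care in the argument lies in the bookkeeping of boundary indices during the fold operations and in confirming the twin relation survives at each intermediate stage; once set up, the rest is a transparent suspension recurrence.
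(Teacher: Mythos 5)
Your proposal is correct and takes essentially the same route as the paper: both identify $\pmcgg$ with the independence complex of the conflict graph (the paper's $X_n$, your $H$), apply the fold lemma (\Cref{foldlemma}) to split off a two-point independence complex, and conclude by the suspension recurrence $\pmcgg \simeq \Sigma\,\mathcal{M}_p\left(\mathcal{G}_{2\times (n-2)}\right)$ with the same base cases. The only difference is the particular fold sequence --- the paper deletes $b_{n-2}$, $c_{n-2}$, $a_{n-1}$ to split off a $P_3$, whereas you first delete every $b_i$ via the twin relation $N_H(b_i)=N_H(c_i)$ (the graph-theoretic form of \Cref{Lemma 2}) and then split off the edge $\{a_1,c_1\}$ --- and this is immaterial.
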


    In an earlier version of this article, this result was proven by explicitly constructing a sequence of element pairings. The proof presented here, suggested by an anonymous referee and inspired by the earlier proof, is based on the fold lemma for independence complexes (see \Cref{foldlemma}). This approach significantly shortens the proof while effectively constructing an acyclic pairing.

    For $n\geq 2$, consider the graph $X_n$, as follows,
    \begin{align*}
        V\left(X_n\right) &= E\left(\twongg\right)\\
        E\left(X_n\right) &= \left\{\{v,w\} \subset V\left(X_n\right)\ \middle |\ v \text{ and } w \text{ share a common vertex} \right\}\\ &\ \ \ \ \ \bigsqcup \left\{\{v,w\} \subset V\left(X_n\right)\ \middle |\ \begin{array}{c}
         (v,w) = \left(b_i,c_{i+1}\right) \text{ or } (v,w) = \left(c_i,b_{i+1}\right),\\
         \text{ for some } 1\leq i \leq n-2 
        \end{array}
        \right\}
    \end{align*}

    In other words, $X_n$ is the graph formed by the line graph of $\twongg$ with additional edges for bad matchings of $\twongg$ (see \Cref{fig: graph of G(2×5) and X5.} for an example).
    
    \begin{figure}[H]
    \centering
    \begin{tikzpicture}[line cap=round, scale=0.45, line join=round, >=triangle 45, x=1.1cm, y=1.1cm]
        \clip(-1,-1) rectangle (33,4.5);
        \draw [line width=1pt] (0,0)-- (3,0);
        \draw [line width=1pt] (3,0)-- (6,0);
        \draw [line width=1pt] (6,0)-- (9,0);
        \draw [line width=1pt] (9,0)-- (12,0);
        
        \draw [line width=1pt] (0,3)-- (3,3);
        \draw [line width=1pt] (3,3)-- (6,3);
        \draw [line width=1pt] (6,3)-- (9,3);
        \draw [line width=1pt] (9,3)-- (12,3);
        
        \draw [line width=1pt] (0,0)-- (0,3);
        \draw [line width=1pt] (3,0)-- (3,3);
        \draw [line width=1pt] (6,0)-- (6,3);
        \draw [line width=1pt] (9,0)-- (9,3);
        \draw [line width=1pt] (12,0)-- (12,3);


        \draw (1.5,3.5) node[anchor=center] {$b_1$};
        \draw (4.5,3.5) node[anchor=center] {$b_2$};
        \draw (7.5,3.5) node[anchor=center] {$b_3$};
        \draw (10.5,3.5) node[anchor=center] {$b_4$};

        \draw (1.5,-0.5) node[anchor=center] {$c_1$};
        \draw (4.5,-0.5) node[anchor=center] {$c_2$};
        \draw (7.5,-0.5) node[anchor=center] {$c_3$};
        \draw (10.5,-0.5) node[anchor=center] {$c_4$};

        \draw (-0.5,1.5) node[anchor=center] {$a_1$};
        \draw (2.5,1.5) node[anchor=center] {$a_2$};
        \draw (5.5,1.5) node[anchor=center] {$a_3$};
        \draw (8.5,1.5) node[anchor=center] {$a_4$};
        \draw (11.5,1.5) node[anchor=center] {$a_5$};
        
        \begin{scriptsize}
            \draw [fill=xdxdff] (0,0) circle (3.5pt);
            \draw [fill=xdxdff] (3,0) circle (3.5pt);
            \draw [fill=xdxdff] (6,0) circle (3.5pt);
            \draw [fill=xdxdff] (9,0) circle (3.5pt);
            \draw [fill=xdxdff] (12,0) circle (3.5pt);
            \draw [fill=xdxdff] (0,3) circle (3.5pt);
            \draw [fill=xdxdff] (3,3) circle (3.5pt);
            \draw [fill=xdxdff] (6,3) circle (3.5pt);
            \draw [fill=xdxdff] (9,3) circle (3.5pt);
            \draw [fill=xdxdff] (12,3) circle (3.5pt);
        \end{scriptsize}


        \draw [line width=1pt] (18.5,3)-- (16.5,1.5);
        \draw [line width=1pt] (18.5,0)-- (16.5,1.5);
        \draw [line width=1pt] (18.5,0)-- (20.5,1.5);
        \draw [line width=1pt] (20.5,1.5)-- (18.5,3);
        \draw [line width=1pt] (20.5,1.5)-- (22.5,3);
        \draw [line width=1pt] (22.5,3)-- (24.5,1.5);
        \draw [line width=1pt] (24.5,1.5)-- (26.5,3);
        \draw [line width=1pt] (26.5,3)-- (28.5,1.5);
        \draw [line width=1pt] (28.5,1.5)-- (30.5,3);
        \draw [line width=1pt] (30.5,3)-- (32.5,1.5);
        \draw [line width=1pt] (32.5,1.5)-- (30.5,0);
        \draw [line width=1pt] (30.5,0)-- (28.5,1.5);
        \draw [line width=1pt] (28.5,1.5)-- (26.5,0);
        \draw [line width=1pt] (26.5,0)-- (24.5,1.5);
        \draw [line width=1pt] (24.5,1.5)-- (22.5,0);
        \draw [line width=1pt] (22.5,0)-- (20.5,1.5);
        \draw [line width=1pt] (18.5,3)-- (22.5,3);
        \draw [line width=1pt] (22.5,3)-- (26.5,3);
        \draw [line width=1pt] (26.5,3)-- (30.5,3);
        \draw [line width=1pt] (18.5,0)-- (22.5,0);
        \draw [line width=1pt] (22.5,0)-- (26.5,0);
        \draw [line width=1pt] (26.5,0)-- (30.5,0);

        \draw (18.5,3.5) node[anchor=center] {$b_1$};
        \draw (22.5,3.5) node[anchor=center] {$b_2$};
        \draw (26.5,3.5) node[anchor=center] {$b_3$};
        \draw (30.5,3.5) node[anchor=center] {$b_4$};

        \draw (18.5,-0.5) node[anchor=center] {$c_1$};
        \draw (22.5,-0.5) node[anchor=center] {$c_2$};
        \draw (26.5,-0.5) node[anchor=center] {$c_3$};
        \draw (30.5,-0.5) node[anchor=center] {$c_4$};

        \draw (16,1.6) node[anchor=center] {$a_1$};
        \draw (19.8,1.6) node[anchor=center] {$a_2$};
        \draw (23.8,1.6) node[anchor=center] {$a_3$};
        \draw (27.8,1.6) node[anchor=center] {$a_4$};
        \draw (31.8,1.6) node[anchor=center] {$a_5$};

        \draw [line width=1pt] (18.5,3) .. controls (21,2) .. (22.5,0);
        \draw [line width=1pt] (18.5,0) .. controls (21,0.9) .. (22.5,3);
        \draw [line width=1pt] (22.5,3) .. controls (25,2) .. (26.5,0);
        \draw [line width=1pt] (22.5,0) .. controls (25,0.9) .. (26.5,3);
        \draw [line width=1pt] (26.5,3) .. controls (29,2) .. (30.5,0);
        \draw [line width=1pt] (26.5,0) .. controls (29,0.9) .. (30.5,3);

        \begin{scriptsize}
            \draw [fill=xdxdff] (16.5,1.5) circle (3.5pt);
            \draw [fill=xdxdff] (18.5,0) circle (3.5pt);
            \draw [fill=xdxdff] (20.5,1.5) circle (3.5pt);
            \draw [fill=xdxdff] (18.5,3) circle (3.5pt);
            \draw [fill=xdxdff] (22.5,3) circle (3.5pt);
            \draw [fill=xdxdff] (24.5,1.5) circle (3.5pt);
            \draw [fill=xdxdff] (26.5,3) circle (3.5pt);
            \draw [fill=xdxdff] (28.5,1.5) circle (3.5pt);
            \draw [fill=xdxdff] (30.5,3) circle (3.5pt);
            \draw [fill=xdxdff] (32.5,1.5) circle (3.5pt);
            \draw [fill=xdxdff] (30.5,0) circle (3.5pt);
            \draw [fill=xdxdff] (26.5,0) circle (3.5pt);
            \draw [fill=xdxdff] (22.5,0) circle (3.5pt);
        \end{scriptsize}
    \end{tikzpicture}
    \caption{The graphs $\mathcal{G}_{2\times 5}$ (left) and $X_5$ (right).}
    \label{fig: graph of G(2×5) and X5.}
    \end{figure}
 
    In order to understand the proof, we first discuss the independent sets and neighborhood of a vertex in a graph $G$. An \textit{independent set}, $I$ is a subset of the vertex set $V(G)$ such that no two vertices in $I$ are adjacent. The neighborhood of a vertex $v$ in $G$, denoted $N_G(v)$ is the set $\{w \in V(G)\ | \ (v,w)\in E(G)\}$. If the graph is clear from context, we write $N(v)$ in place of $N_G(v)$.
    
    \begin{defn}
        For an undirected graph $G$, the \textit{independence complex} of $G$, denoted $\ind(G)$, is a simplicial complex with $V(G)$ as the set of vertices and $I\subseteq V(G)$ is a face of $\ind(G)$ if and only if $I$ is an independent set of $G$.
    \end{defn}
    
    The following lemma (known as the \textit{fold lemma} for the independence complexes) states that the removal of a vertex from the graph under certain conditions preserves the homotopy type of the independence complex.

    \begin{lemma}(\cite[{Lemma~2.4}]{engstrom_clawfree})\label{foldlemma}
        For two distinct vertices $v$ and $w$ of a graph $G$ with $N(v) \subseteq N(w)$, $\ind(G)$ collapses onto $\ind\left(G \setminus \left\{w\right\}\right)$
    \end{lemma}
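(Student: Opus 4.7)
The strategy is to exhibit an explicit acyclic matching on the face poset of $\ind(G)$ whose critical (unmatched) cells form precisely the subcomplex $\ind(G\setminus\{w\})$, after which Forman's discrete Morse theorem in its collapse form (compare \Cref{theorem 1}) delivers the conclusion. Throughout, faces of $\ind(G)$ are just the independent sets of $G$.

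The first step is a small but essential observation: the hypothesis $N(v)\subseteq N(w)$ in a simple graph forces $v$ and $w$ to be non-adjacent, since $v\in N(w)$ would imply $w\in N(v)\subseteq N(w)$, contradicting simplicity. Consequently, whenever $\sigma\in\ind(G)$ satisfies $w\in\sigma$, the set $\sigma\cup\{v\}$ is again independent: every vertex of $\sigma\setminus\{w\}$ lies outside $N(w)\supseteq N(v)$ because $\sigma$ is independent and $w\in\sigma$, and $w$ itself is not in $N(v)$.

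With this in hand I would define the pairing
\[
M=\bigl\{(\sigma,\sigma\cup\{v\})\ :\ \sigma\in\ind(G),\ w\in\sigma,\ v\notin\sigma\bigr\}.
\]
Each face of $\ind(G)$ that contains $w$ appears in exactly one pair: if $v\notin\sigma$, then $\sigma$ is the lower element of $(\sigma,\sigma\cup\{v\})$, and if $v\in\sigma$, then $\sigma$ is the upper element of $(\sigma\setminus\{v\},\sigma)$. The unpaired faces are therefore exactly the independent sets of $G$ that avoid $w$, which constitute the subcomplex $\ind(G\setminus\{w\})$.

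To verify that $M$ is acyclic, suppose for contradiction there is a cycle $\sigma_1\prec\tau_1\succ\sigma_2\prec\tau_2\succ\cdots\prec\tau_k\succ\sigma_1$ with $(\sigma_i,\tau_i)\in M$ and the $\sigma_i$ pairwise distinct. Every $\sigma_i$ is the lower element of a pair, so $w\in\sigma_i$ and $v\notin\sigma_i$, giving $\tau_i=\sigma_i\cup\{v\}$. The down-step $\tau_i\succ\sigma_{i+1}$ with $\sigma_{i+1}\neq\sigma_i$ forces $\sigma_{i+1}=\tau_i\setminus\{u\}$ for some $u\neq v$, hence $v\in\sigma_{i+1}$, contradicting $v\notin\sigma_{i+1}$. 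Thus no cycle exists. Since $M$ is a perfect acyclic matching on $\ind(G)\setminus\ind(G\setminus\{w\})$ and every matched pair differs by a single vertex (hence by one in dimension), Forman's theorem produces an ordered sequence of elementary collapses from $\ind(G)$ down to $\ind(G\setminus\{w\})$. I expect the only subtle point to be the final invocation—translating the acyclic matching into an honest simplicial collapse rather than a mere homotopy equivalence—but this is exactly the content of the standard collapse form of discrete Morse theory and requires no further combinatorial input.
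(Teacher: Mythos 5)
Your proof is correct and coincides with the standard argument for this fold lemma: the paper itself only cites Engstr\"om's Lemma~2.4 without reproducing a proof, and the proof in that reference is precisely the element pairing with $v$ on the faces containing $w$ that you construct (using $v\notin N(w)$ and $N(v)\subseteq N(w)$ to see that $\sigma\cup\{v\}$ stays independent), together with the observation that an acyclic matching whose critical cells form a subcomplex yields a sequence of elementary collapses onto that subcomplex. Nothing further is needed.
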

    
    We now discuss the proof of \Cref{thm: 2n grid graph homotopy}.
    
    \begin{proof}[Proof of \Cref{thm: 2n grid graph homotopy}]
        Note that, using \Cref{Lemma 1}, we have $$\ind\left(X_n\right) = \pmcgg.$$

        The proof is by induction on $n$. The result follows immediately for the case when $n=1$ and $n=2$, \textit{i.e.}, $\ind(X_1) \simeq *$ and $\ind(X_2) \simeq \mathbb{S}^0$. For $n\geq 3$, observe the following:
        \begin{enumerate}
            \item In $X_n$, $N(a_n) \subset N(b_{n-2})$, therefore, using \Cref{foldlemma} we have, $$\ind\left(X_n\right) \simeq \ind\left(X_n \setminus \left\{b_{n-2}\right\}\right);$$
            \begin{figure}[H]
            \centering
            \begin{tikzpicture}[line cap=round, scale=0.42, line join=round, >=triangle 45, x=1cm, y=1.2cm]
                \clip(-0.5,-1) rectangle (37.5,4.5);
                \draw [line width=1pt] (2,0)-- (4,1.5);
                \draw [line width=1pt] (4,1.5)-- (2,3);
                \draw [line width=1pt] (4,1.5)-- (6,3);
                \draw [line width=1pt] (6,3)-- (8,1.5);
                \draw [line width=1pt] (8,1.5)-- (10,3);
                \draw [line width=1pt] (10,3)-- (12,1.5);
                \draw [line width=1pt] (12,1.5)-- (14,3);
                \draw [line width=1pt] (14,3)-- (16,1.5);
                \draw [line width=1pt] (16,1.5)-- (14,0);
                \draw [line width=1pt] (14,0)-- (12,1.5);
                \draw [line width=1pt] (12,1.5)-- (10,0);
                \draw [line width=1pt] (10,0)-- (8,1.5);
                \draw [line width=1pt] (8,1.5)-- (6,0);
                \draw [line width=1pt] (6,0)-- (4,1.5);
                \draw [line width=1pt] (2,3)-- (6,3);
                \draw [line width=1pt] (6,3)-- (10,3);
                \draw [line width=1pt] (10,3)-- (14,3);
                \draw [line width=1pt] (2,0)-- (6,0);
                \draw [line width=1pt] (6,0)-- (10,0);
                \draw [line width=1pt] (10,0)-- (14,0);
        
                \draw [line width=1pt,dash pattern=on 5pt off 5pt] (0,3)-- (2,3);
                \draw [line width=1pt,dash pattern=on 5pt off 5pt] (0,0)-- (2,0);
        
                \draw [line width=1pt] (2,3) .. controls (4.5,2) .. (6,0);
                \draw [line width=1pt] (2,0) .. controls (4.5,0.9) .. (6,3);
                \draw [line width=1pt] (6,3) .. controls (8.5,2) .. (10,0);
                \draw [line width=1pt] (6,0) .. controls (8.5,0.9) .. (10,3);
                \draw [line width=1pt] (10,3) .. controls (12.5,2) .. (14,0);
                \draw [line width=1pt] (10,0) .. controls (12.5,0.9) .. (14,3);
        
                \draw (2.4,3.5) node[anchor=center] {$b_{n-4}$};
                \draw (6.4,3.5) node[anchor=center] {$b_{n-3}$};
                \draw (10.4,3.5) node[anchor=center] {$b_{n-2}$};
                \draw (14.4,3.5) node[anchor=center] {$b_{n-1}$};
        
                \draw (2.4,-0.5) node[anchor=center] {$c_{n-4}$};
                \draw (6.4,-0.5) node[anchor=center] {$c_{n-3}$};
                \draw (10.4,-0.5) node[anchor=center] {$c_{n-2}$};
                \draw (14.4,-0.5) node[anchor=center] {$c_{n-1}$};
        
                \draw (2.5,1.6) node[anchor=center] {$a_{n-3}$};
                \draw (6.5,1.6) node[anchor=center] {$a_{n-2}$};
                \draw (10.5,1.6) node[anchor=center] {$a_{n-1}$};
                \draw (15.1,1.6) node[anchor=center] {$a_{n}$};
                
                \begin{scriptsize}
                    \draw [fill=xdxdff] (2,0) circle (3.5pt);
                    \draw [fill=ududff] (4,1.5) circle (3.5pt);
                    \draw [fill=ududff] (2,3) circle (3.5pt);
                    \draw [fill=ududff] (6,3) circle (3.5pt);
                    \draw [fill=ududff] (8,1.5) circle (3.5pt);
                    \draw [fill=ududff] (10,3) circle (3.5pt);
                    \draw [fill=ududff] (12,1.5) circle (3.5pt);
                    \draw [fill=ududff] (14,3) circle (3.5pt);
                    \draw [fill=ududff] (16,1.5) circle (3.5pt);
                    \draw [fill=ududff] (14,0) circle (3.5pt);
                    \draw [fill=ududff] (10,0) circle (3.5pt);
                    \draw [fill=xdxdff] (6,0) circle (3.5pt);
                \end{scriptsize}
        
        
                \draw [line width=1pt, -stealth] (17.5,1.5)-- (19.5,1.5);
        
                \draw [line width=1pt] (22.5,0)-- (24.5,1.5);
                \draw [line width=1pt] (24.5,1.5)-- (22.5,3);
                \draw [line width=1pt] (24.5,1.5)-- (26.5,3);
                \draw [line width=1pt] (26.5,3)-- (28.5,1.5);
                \draw [line width=1pt] (32.5,1.5)-- (34.5,3);
                \draw [line width=1pt] (34.5,3)-- (36.5,1.5);
                \draw [line width=1pt] (36.5,1.5)-- (34.5,0);
                \draw [line width=1pt] (34.5,0)-- (32.5,1.5);
                \draw [line width=1pt] (32.5,1.5)-- (30.5,0);
                \draw [line width=1pt] (30.5,0)-- (28.5,1.5);
                \draw [line width=1pt] (28.5,1.5)-- (26.5,0);
                \draw [line width=1pt] (26.5,0)-- (24.5,1.5);
                \draw [line width=1pt] (22.5,3)-- (26.5,3);
                \draw [line width=1pt] (22.5,0)-- (26.5,0);
                \draw [line width=1pt] (26.5,0)-- (30.5,0);
                \draw [line width=1pt] (30.5,0)-- (34.5,0);
        
                \draw [line width=1pt,dash pattern=on 5pt off 5pt] (20.5,3)-- (22.5,3);
                \draw [line width=1pt,dash pattern=on 5pt off 5pt] (20.5,0)-- (22.5,0);
        
                \draw [line width=1pt] (22.5,3) .. controls (25,2) .. (26.5,0);
                \draw [line width=1pt] (22.5,0) .. controls (25,0.9) .. (26.5,3);
                \draw [line width=1pt] (26.5,3) .. controls (29,2) .. (30.5,0);
                \draw [line width=1pt] (30.5,0) .. controls (33,0.9) .. (34.5,3);

                \draw (22.9,3.5) node[anchor=center] {$b_{n-4}$};
                \draw (26.9,3.5) node[anchor=center] {$b_{n-3}$};
                \draw (30.9,3.6) node[anchor=center] {$b_{n-2}$};
                \draw (34.9,3.5) node[anchor=center] {$b_{n-1}$};
        
                \draw (22.9,-0.5) node[anchor=center] {$c_{n-4}$};
                \draw (26.9,-0.5) node[anchor=center] {$c_{n-3}$};
                \draw (30.9,-0.5) node[anchor=center] {$c_{n-2}$};
                \draw (34.9,-0.5) node[anchor=center] {$c_{n-1}$};
        
                \draw (23,1.6) node[anchor=center] {$a_{n-3}$};
                \draw (27,1.6) node[anchor=center] {$a_{n-2}$};
                \draw (31.3,1.7) node[anchor=center] {$a_{n-1}$};
                \draw (35.6,1.6) node[anchor=center] {$a_{n}$};
        
                \begin{scriptsize}
                    \draw [fill=ududff] (22.5,0) circle (3.5pt);
                    \draw [fill=ududff] (24.5,1.5) circle (3.5pt);
                    \draw [fill=ududff] (22.5,3) circle (3.5pt);
                    \draw [fill=ududff] (26.5,3) circle (3.5pt);
                    \draw [fill=ududff] (28.5,1.5) circle (3.5pt);
                    \draw (30.5,3) circle (4pt);
                    \draw [fill=ududff] (32.5,1.5) circle (3.5pt);
                    \draw [fill=ududff] (34.5,3) circle (3.5pt);
                    \draw [fill=ududff] (36.5,1.5) circle (3.5pt);
                    \draw [fill=ududff] (34.5,0) circle (3.5pt);
                    \draw [fill=ududff] (30.5,0) circle (3.5pt);
                    \draw [fill=xdxdff] (26.5,0) circle (3.5pt);
                \end{scriptsize}
            \end{tikzpicture}
            \label{fig: Xn to Xn - b(n-2)}
            \caption{Since $N(a_n) \subset N(b_{n-2})$ in $X_n$, $b_{n-2}$ can be deleted from $X_n$.}
            \end{figure}
            
            \item Similarly, in $X_n \setminus \left\{b_{n-2}\right\}$, $N(a_n) \subset N(c_{n-2})$, therefore, using \Cref{foldlemma} we have, $$\ind\left(X_n\setminus \left\{b_{n-2}\right\}\right) \simeq \ind\left(X_n \setminus \left\{b_{n-2}, c_{n-2}\right\}\right);$$
            \begin{figure}[H]
            \centering
            \begin{tikzpicture}[line cap=round, scale=0.42, line join=round, >=triangle 45, x=1cm, y=1.2cm]
                \clip(-0.5,-1) rectangle (37.5,4.5);
                \draw [line width=1pt] (2,0)-- (4,1.5);
                \draw [line width=1pt] (4,1.5)-- (2,3);
                \draw [line width=1pt] (4,1.5)-- (6,3);
                \draw [line width=1pt] (6,3)-- (8,1.5);
                \draw [line width=1pt] (12,1.5)-- (14,3);
                \draw [line width=1pt] (14,3)-- (16,1.5);
                \draw [line width=1pt] (16,1.5)-- (14,0);
                \draw [line width=1pt] (14,0)-- (12,1.5);
                \draw [line width=1pt] (12,1.5)-- (10,0);
                \draw [line width=1pt] (10,0)-- (8,1.5);
                \draw [line width=1pt] (8,1.5)-- (6,0);
                \draw [line width=1pt] (6,0)-- (4,1.5);
                \draw [line width=1pt] (2,3)-- (6,3);
                \draw [line width=1pt] (2,0)-- (6,0);
                \draw [line width=1pt] (6,0)-- (10,0);
                \draw [line width=1pt] (10,0)-- (14,0);
        
                \draw [line width=1pt,dash pattern=on 5pt off 5pt] (0,3)-- (2,3);
                \draw [line width=1pt,dash pattern=on 5pt off 5pt] (0,0)-- (2,0);
        
                \draw [line width=1pt] (2,3) .. controls (4.5,2) .. (6,0);
                \draw [line width=1pt] (2,0) .. controls (4.5,0.9) .. (6,3);
                \draw [line width=1pt] (6,3) .. controls (8.5,2) .. (10,0);
                \draw [line width=1pt] (10,0) .. controls (12.5,0.9) .. (14,3);
        
                \draw (2.4,3.5) node[anchor=center] {$b_{n-4}$};
                \draw (6.4,3.5) node[anchor=center] {$b_{n-3}$};
                \draw (10.4,3.6) node[anchor=center] {$b_{n-2}$};
                \draw (14.4,3.5) node[anchor=center] {$b_{n-1}$};
        
                \draw (2.4,-0.5) node[anchor=center] {$c_{n-4}$};
                \draw (6.4,-0.5) node[anchor=center] {$c_{n-3}$};
                \draw (10.4,-0.5) node[anchor=center] {$c_{n-2}$};
                \draw (14.4,-0.5) node[anchor=center] {$c_{n-1}$};
        
                \draw (2.5,1.6) node[anchor=center] {$a_{n-3}$};
                \draw (6.5,1.6) node[anchor=center] {$a_{n-2}$};
                \draw (10.8,1.7) node[anchor=center] {$a_{n-1}$};
                \draw (15.1,1.6) node[anchor=center] {$a_{n}$};
                
                \begin{scriptsize}
                    \draw [fill=ududff] (2,0) circle (3.5pt);
                    \draw [fill=ududff] (4,1.5) circle (3.5pt);
                    \draw [fill=ududff] (2,3) circle (3.5pt);
                    \draw [fill=ududff] (6,3) circle (3.5pt);
                    \draw [fill=ududff] (8,1.5) circle (3.5pt);
                    \draw (10,3) circle (4pt);
                    \draw [fill=ududff] (12,1.5) circle (3.5pt);
                    \draw [fill=ududff] (14,3) circle (3.5pt);
                    \draw [fill=ududff] (16,1.5) circle (3.5pt);
                    \draw [fill=ududff] (14,0) circle (3.5pt);
                    \draw [fill=ududff] (10,0) circle (3.5pt);
                    \draw [fill=xdxdff] (6,0) circle (3.5pt);
                \end{scriptsize}
        
        
                \draw [line width=1pt, -stealth] (17.5,1.5)-- (19.5,1.5);
        
                \draw [line width=1pt] (22.5,0)-- (24.5,1.5);
                \draw [line width=1pt] (24.5,1.5)-- (22.5,3);
                \draw [line width=1pt] (24.5,1.5)-- (26.5,3);
                \draw [line width=1pt] (26.5,3)-- (28.5,1.5);
                \draw [line width=1pt] (32.5,1.5)-- (34.5,3);
                \draw [line width=1pt] (34.5,3)-- (36.5,1.5);
                \draw [line width=1pt] (36.5,1.5)-- (34.5,0);
                \draw [line width=1pt] (34.5,0)-- (32.5,1.5);
                \draw [line width=1pt] (28.5,1.5)-- (26.5,0);
                \draw [line width=1pt] (26.5,0)-- (24.5,1.5);
                \draw [line width=1pt] (22.5,3)-- (26.5,3);
                \draw [line width=1pt] (22.5,0)-- (26.5,0);
        
                \draw [line width=1pt,dash pattern=on 5pt off 5pt] (20.5,3)-- (22.5,3);
                \draw [line width=1pt,dash pattern=on 5pt off 5pt] (20.5,0)-- (22.5,0);
        
                \draw [line width=1pt] (22.5,3) .. controls (25,2) .. (26.5,0);
                \draw [line width=1pt] (22.5,0) .. controls (25,0.9) .. (26.5,3);

                \draw (22.9,3.5) node[anchor=center] {$b_{n-4}$};
                \draw (26.9,3.5) node[anchor=center] {$b_{n-3}$};
                \draw (30.9,3.6) node[anchor=center] {$b_{n-2}$};
                \draw (34.9,3.5) node[anchor=center] {$b_{n-1}$};
        
                \draw (22.9,-0.5) node[anchor=center] {$c_{n-4}$};
                \draw (26.9,-0.5) node[anchor=center] {$c_{n-3}$};
                \draw (30.9,-0.6) node[anchor=center] {$c_{n-2}$};
                \draw (34.9,-0.5) node[anchor=center] {$c_{n-1}$};
        
                \draw (23,1.6) node[anchor=center] {$a_{n-3}$};
                \draw (27,1.6) node[anchor=center] {$a_{n-2}$};
                \draw (31.3,1.6) node[anchor=center] {$a_{n-1}$};
                \draw (35.6,1.6) node[anchor=center] {$a_{n}$};
        
                \begin{scriptsize}
                    \draw [fill=ududff] (22.5,0) circle (3.5pt);
                    \draw [fill=ududff] (24.5,1.5) circle (3.5pt);
                    \draw [fill=ududff] (22.5,3) circle (3.5pt);
                    \draw [fill=ududff] (26.5,3) circle (3.5pt);
                    \draw [fill=ududff] (28.5,1.5) circle (3.5pt);
                    \draw (30.5,3) circle (4pt);
                    \draw [fill=ududff] (32.5,1.5) circle (3.5pt);
                    \draw [fill=ududff] (34.5,3) circle (3.5pt);
                    \draw [fill=ududff] (36.5,1.5) circle (3.5pt);
                    \draw [fill=ududff] (34.5,0) circle (3.5pt);
                    \draw (30.5,0) circle (4pt);
                    \draw [fill=xdxdff] (26.5,0) circle (3.5pt);
                \end{scriptsize}
            \end{tikzpicture}
            \label{fig: Xn - b(n-2) to Xn - [b(n-2), c(n-2)]}
            \caption{\centering Since $N(a_n) \subset N(c_{n-2})$ in $X_n \setminus \left\{b_{n-2}\right\}$, $c_{n-2}$ can be deleted from $X_n \setminus \left\{b_{n-2}\right\}$.}
            \end{figure}
            
            \item At last, in $X_n \setminus \left\{b_{n-2}, c_{n-2}\right\}$, $N(a_n) \subset N(a_{n-1})$, therefore, using \Cref{foldlemma} we have, $$\ind\left(X_n\setminus \left\{b_{n-2},c_{n-2}\right\}\right) \simeq \ind\left(X_n \setminus \left\{b_{n-2}, c_{n-2},a_{n-1}\right\}\right);$$
            \begin{figure}[H]
            \centering
            \begin{tikzpicture}[line cap=round, scale=0.42, line join=round, >=triangle 45, x=1cm, y=1.2cm]
                \clip(-0.5,-1) rectangle (37.5,4.5);
                \draw [line width=1pt] (2,0)-- (4,1.5);
                \draw [line width=1pt] (4,1.5)-- (2,3);
                \draw [line width=1pt] (4,1.5)-- (6,3);
                \draw [line width=1pt] (6,3)-- (8,1.5);
                \draw [line width=1pt] (12,1.5)-- (14,3);
                \draw [line width=1pt] (14,3)-- (16,1.5);
                \draw [line width=1pt] (16,1.5)-- (14,0);
                \draw [line width=1pt] (14,0)-- (12,1.5);
                \draw [line width=1pt] (8,1.5)-- (6,0);
                \draw [line width=1pt] (6,0)-- (4,1.5);
                \draw [line width=1pt] (2,3)-- (6,3);
                \draw [line width=1pt] (2,0)-- (6,0);
        
                \draw [line width=1pt,dash pattern=on 5pt off 5pt] (0,3)-- (2,3);
                \draw [line width=1pt,dash pattern=on 5pt off 5pt] (0,0)-- (2,0);
        
                \draw [line width=1pt] (2,3) .. controls (4.5,2) .. (6,0);
                \draw [line width=1pt] (2,0) .. controls (4.5,0.9) .. (6,3);
        
                \draw (2.4,3.5) node[anchor=center] {$b_{n-4}$};
                \draw (6.4,3.5) node[anchor=center] {$b_{n-3}$};
                \draw (10.4,3.6) node[anchor=center] {$b_{n-2}$};
                \draw (14.4,3.5) node[anchor=center] {$b_{n-1}$};
        
                \draw (2.4,-0.5) node[anchor=center] {$c_{n-4}$};
                \draw (6.4,-0.5) node[anchor=center] {$c_{n-3}$};
                \draw (10.4,-0.6) node[anchor=center] {$c_{n-2}$};
                \draw (14.4,-0.5) node[anchor=center] {$c_{n-1}$};
        
                \draw (2.5,1.6) node[anchor=center] {$a_{n-3}$};
                \draw (6.5,1.6) node[anchor=center] {$a_{n-2}$};
                \draw (10.8,1.7) node[anchor=center] {$a_{n-1}$};
                \draw (15.1,1.6) node[anchor=center] {$a_{n}$};
                
                \begin{scriptsize}
                    \draw [fill=ududff] (2,0) circle (3.5pt);
                    \draw [fill=ududff] (4,1.5) circle (3.5pt);
                    \draw [fill=ududff] (2,3) circle (3.5pt);
                    \draw [fill=ududff] (6,3) circle (3.5pt);
                    \draw [fill=ududff] (8,1.5) circle (3.5pt);
                    \draw (10,3) circle (4pt);
                    \draw [fill=ududff] (12,1.5) circle (3.5pt);
                    \draw [fill=ududff] (14,3) circle (3.5pt);
                    \draw [fill=ududff] (16,1.5) circle (3.5pt);
                    \draw [fill=ududff] (14,0) circle (3.5pt);
                    \draw (10,0) circle (4pt);
                    \draw [fill=xdxdff] (6,0) circle (3.5pt);
                \end{scriptsize}
        
        
                \draw [line width=1pt, -stealth] (17.5,1.5)-- (19.5,1.5);
        
                \draw [line width=1pt] (22.5,0)-- (24.5,1.5);
                \draw [line width=1pt] (24.5,1.5)-- (22.5,3);
                \draw [line width=1pt] (24.5,1.5)-- (26.5,3);
                \draw [line width=1pt] (26.5,3)-- (28.5,1.5);
                \draw [line width=1pt] (34.5,3)-- (36.5,1.5);
                \draw [line width=1pt] (36.5,1.5)-- (34.5,0);
                \draw [line width=1pt] (28.5,1.5)-- (26.5,0);
                \draw [line width=1pt] (26.5,0)-- (24.5,1.5);
                \draw [line width=1pt] (22.5,3)-- (26.5,3);
                \draw [line width=1pt] (22.5,0)-- (26.5,0);
        
                \draw [line width=1pt,dash pattern=on 5pt off 5pt] (20.5,3)-- (22.5,3);
                \draw [line width=1pt,dash pattern=on 5pt off 5pt] (20.5,0)-- (22.5,0);
        
                \draw [line width=1pt] (22.5,3) .. controls (25,2) .. (26.5,0);
                \draw [line width=1pt] (22.5,0) .. controls (25,0.9) .. (26.5,3);

                \draw (22.9,3.5) node[anchor=center] {$b_{n-4}$};
                \draw (26.9,3.5) node[anchor=center] {$b_{n-3}$};
                \draw (30.9,3.6) node[anchor=center] {$b_{n-2}$};
                \draw (34.9,3.5) node[anchor=center] {$b_{n-1}$};
        
                \draw (22.9,-0.5) node[anchor=center] {$c_{n-4}$};
                \draw (26.9,-0.5) node[anchor=center] {$c_{n-3}$};
                \draw (30.9,-0.6) node[anchor=center] {$c_{n-2}$};
                \draw (34.9,-0.5) node[anchor=center] {$c_{n-1}$};
        
                \draw (23,1.6) node[anchor=center] {$a_{n-3}$};
                \draw (27,1.6) node[anchor=center] {$a_{n-2}$};
                \draw (31.3,1.6) node[anchor=center] {$a_{n-1}$};
                \draw (35.6,1.6) node[anchor=center] {$a_{n}$};
        
                \begin{scriptsize}
                    \draw [fill=ududff] (22.5,0) circle (3.5pt);
                    \draw [fill=ududff] (24.5,1.5) circle (3.5pt);
                    \draw [fill=ududff] (22.5,3) circle (3.5pt);
                    \draw [fill=ududff] (26.5,3) circle (3.5pt);
                    \draw [fill=ududff] (28.5,1.5) circle (3.5pt);
                    \draw (30.5,3) circle (4pt);
                    \draw (32.5,1.5) circle (4pt);
                    \draw [fill=ududff] (34.5,3) circle (3.5pt);
                    \draw [fill=ududff] (36.5,1.5) circle (3.5pt);
                    \draw [fill=ududff] (34.5,0) circle (3.5pt);
                    \draw (30.5,0) circle (4pt);
                    \draw [fill=xdxdff] (26.5,0) circle (3.5pt);
                \end{scriptsize}
            \end{tikzpicture}
            \label{fig: Xn - [b(n-2), c(n-2)] to Xn - [b(n-2), c(n-2), a(n-1)]}
            \caption{\centering Since $N(a_n) \subset N(a_{n-1})$ in $X_n \setminus \left\{ b_{n-2}, c_{n-2}\right\}$, $a_{n-1}$ can be deleted from $X_n \setminus \left\{ b_{n-2}, c_{n-2}\right\}$.}
            \end{figure}
        \end{enumerate}
        Additionally, it's evident that, $$\ind\left(X_n \setminus \left\{b_{n-2}, c_{n-2},a_{n-1}\right\}\right) \simeq \ind(X_{n-2} \sqcup P_3).$$
        Here, $P_3$ is a \textit{path graph} on 3 vertices. It is easy to observe that $\ind(P_3) \simeq S^0$. Therefore, using induction, we get the following.
        \begin{align*}
            \pmcgg &\simeq \ind\left(X_{n-2} \sqcup P_3\right)\\
            &\simeq \Sigma \ind(X_{n-2})\\
            &\simeq \begin{cases}
                \Sigma *, &\text{ if } n\text{ is odd};\\
                \Sigma \mathbb{S}^{k-1}, & \text{ if n is even and } n=2k+2.
            \end{cases}
        \end{align*}
        Hence,
        \begin{align*}
            \pmcgg &\simeq \begin{cases}
                *, &\text{ if } n\text{ is odd};\\
                \mathbb{S}^{k}, & \text{ if n is even and } n=2k+2.
            \end{cases}
        \end{align*}
        This completes the proof.
    \end{proof}

\section{The general line tiling of even-sided polygon}\label{Section 4}

    We got the homotopy type of the perfect matching complexes of the $\left(2\times n\right)$-grid graphs in the previous section. As stated earlier, the $\left(2\times n\right)$-grid graph can be visualized as square line tiling. In the next two sections, we determine the homotopy type of the perfect matching complexes of general polygonal line tiling. This section is dedicated to the case of $2n-$gons (polygons with an even number of sides) for $n \geq 3$ only because of the nice symmetry we get in this case. In fact, we show that the perfect matching complex of these graphs is contractible; for whatever size of the $2n-$gon, for $n \geq 3$, we choose, and for any number (at least two) of $2n-$gons, we attach. However, before discussing the proof of what we just stated, let us first understand what we mean by the general line tiling of $2n-$gons. Line tiling of $2n-$gons means polygons with $2n$ edges are attached adjacent to each other along the parallel edges, forming a line. By general line tiling, we mean that we are not restricting ourselves to taking a certain number of the polygons in this line tiling.

    \subsection{Labelling of edges, vertices and polygons in the line tiling of \texorpdfstring{$2n-$}~gons}
    Let $\mathcal{E}_{n,k}$ denote the general $k-2n-$gon line tiling, where $k\geq2$ and $n\geq3$, \textit{i.e.}, $k$ number of $2n-$gons are there in the line tiling. Note here that the edges along which the polygons are attached are parallel; thus, the number of edges above and below them are equal. We label the $2n-$gons, for $n\geq3$ as $P_j$, for $j=1,2,\dots,k$, from left to right, respectively. For each $P_j$, the vertex set and edge set are
    \begin{align*}
        V\left(P_{j}\right) &= \left\{u_{{j,t}}, v_{{j,t}}\ \middle|\ t\in\left\{1,2,\dots,n\right\}\right\};\\
        E\left(P_{j}\right) &= \left\{a_{j}, b_{{j,t}},c_{{j,t}}\ \middle|\ t\in\left\{1,2,\dots,n\right\}\right\}. 
    \end{align*}

    \begin{figure}[H]
    \vspace{-1em}
        \centering
        \hspace*{3em}
            \begin{tikzpicture}[line cap=round,line join=round,>=triangle 45,x=0.6cm,y=0.4cm]
            \clip(-1.5,-6) rectangle (16,10);
            
            \draw [line width=1pt] (0,3)-- (0,0);
            \draw [line width=1pt] (0,0)-- (1.3016512173526733,-2.702906603707256);
            \draw [line width=1pt] (1.3016512173526733,-2.702906603707256)-- (3.6471456647567626,-4.573376009283456);
            \draw [line width=1pt] (3.6471456647567626,-4.573376009283456)-- (6.571929401302232,-5.240938811152399);
            \draw [line width=1pt,dash pattern=on 4pt off 4pt] (6.571929401302232,-5.240938811152399)-- (9.496713137847703,-4.573376009283457);
            \draw [line width=1pt] (9.496713137847703,-4.573376009283457)-- (11.842207585251792,-2.702906603707257);
            \draw [line width=1pt] (11.842207585251792,-2.702906603707257)-- (13.143858802604466,0);
            \draw [line width=1pt] (13.143858802604466,0)-- (13.143858802604466,3);
            \draw [line width=1pt] (13.143858802604466,3)-- (11.842207585251794,5.702906603707255);
            \draw [line width=1pt] (11.842207585251794,5.702906603707255)-- (9.496713137847705,7.573376009283456);
            \draw [line width=1pt,dash pattern=on 4pt off 4pt] (9.496713137847705,7.573376009283456)-- (6.571929401302235,8.2409388111524);
            \draw [line width=1pt] (6.571929401302235,8.2409388111524)-- (3.6471456647567644,7.573376009283457);
            \draw [line width=1pt] (3.6471456647567644,7.573376009283457)-- (1.301651217352676,5.702906603707258);
            \draw [line width=1pt] (1.301651217352676,5.702906603707258)-- (0,3);
            \draw (5.5,2.5) node[anchor=north west] {\Huge$P_{j}$};
            \draw (-1,2.2) node[anchor=north west] {\large$a_{j}$};
            \draw (13.2,2.2) node[anchor=north west] {\large$a_{j+1}$};
            \draw (12.4,5.4) node[anchor=north west] {\large$b_{j,n}$};
            \draw (10.3,8) node[anchor=north west] {\large$b_{j,n-1}$};
            \draw (4.3,9.3) node[anchor=north west] {\large$b_{j,3}$};
            \draw (1.4,8) node[anchor=north west] {\large$b_{j,2}$};
            \draw (-0.5,5.8) node[anchor=north west] {\large$b_{j,1}$};
            \draw (-0.6,-0.8) node[anchor=north west] {\large$c_{j,1}$};
            \draw (1.3,-3.4) node[anchor=north west] {\large$c_{j,2}$};
            \draw (4.4,-4.8) node[anchor=north west] {\large$c_{j,3}$};
            \draw (10.5,-3.4) node[anchor=north west] {\large$c_{j,n-2}$};
            \draw (12.5,-1) node[anchor=north west] {\large$c_{j,n-1}$};
            \draw [color=xdxdff](-1.4,3.8) node[anchor=north west] {\large$u_{j,1}$};
            \draw [color=xdxdff](2.5,8.8) node[anchor=north west] {\large$u_{j,3}$};
            \draw [color=xdxdff](0,6.9) node[anchor=north west] {\large$u_{j,2}$};
            \draw [color=xdxdff](9.1,8.8) node[anchor=north west] {\large$u_{j,n-1}$};
            \draw [color=xdxdff](11.8,6.5) node[anchor=north west] {\large$u_{j,n}$};
            \draw [color=xdxdff](13.2,3.5) node[anchor=north west] {\large$u_{j+1,1}$};
            \draw [color=xdxdff](-1.2,0.8) node[anchor=north west] {\large$l_{j,1}$};
            \draw [color=xdxdff](0,-2) node[anchor=north west] {\large$l_{j,2}$};
            \draw [color=xdxdff](2.8,-4.2) node[anchor=north west] {\large$l_{j,3}$};
            \draw [color=xdxdff](9.4,-4.5) node[anchor=north west] {\large$l_{j,n-2}$};
            \draw [color=xdxdff](11.9,-2.3) node[anchor=north west] {\large$l_{j,n-1}$};
            \draw [color=xdxdff](13.2,0.5) node[anchor=north west] {\large$l_{j+1,1}$};
            \begin{scriptsize}
            \draw [fill=xdxdff] (0,3) circle (1.5pt);
            \draw [fill=xdxdff] (0,0) circle (1.5pt);
            \draw [fill=xdxdff] (1.3016512173526733,-2.702906603707256) circle (1.5pt);
            \draw [fill=xdxdff] (3.6471456647567626,-4.573376009283456) circle (1.5pt);
            \draw [fill=xdxdff] (6.571929401302232,-5.240938811152399) circle (1.5pt);
            \draw [fill=xdxdff] (9.496713137847703,-4.573376009283457) circle (1.5pt);
            \draw [fill=xdxdff] (11.842207585251792,-2.702906603707257) circle (1.5pt);
            \draw [fill=xdxdff] (13.143858802604466,0) circle (1.5pt);
            \draw [fill=xdxdff] (13.143858802604466,3) circle (1.5pt);
            \draw [fill=xdxdff] (11.842207585251794,5.702906603707255) circle (1.5pt);
            \draw [fill=xdxdff] (9.496713137847705,7.573376009283456) circle (1.5pt);
            \draw [fill=xdxdff] (6.571929401302235,8.2409388111524) circle (1.5pt);
            \draw [fill=xdxdff] (3.6471456647567644,7.573376009283457) circle (1.5pt);
            \draw [fill=xdxdff] (1.301651217352676,5.702906603707258) circle (1.5pt);
            \end{scriptsize}
            \end{tikzpicture}
        \vspace{-0.5em}
        \caption{Labelling of edges and vertices of a $2n-$gon in line tiling}
        \label{fig: Labelling}
    \end{figure}
    
    Here, $a_{{j}}$'s are the edges along which polygons are attached, and their endpoints are $u_{{j,1}}$ and $l_{{j,1}}$. For the edges and vertices lying above the edges $a_{{j}}$ and $a_{{j+1}}$, the vertices $u_{{j,s}}$ and $u_{{j,s+1}}$ are the endpoints of the edge $b_{{j,s}}$, for $1 \leq s \leq n-1$ and the vertices $u_{{j,n-1}}$ and $u_{{j+1,1}}$ are the endpoints of the edge $b_{{j,n-1}}$. Similarly, for the edges and vertices lying below, the vertices $l_{{j,s}}$ and $l_{{j,s+1}}$ are the endpoints of the edge $c_{{j,s}}$, for $1 \leq s \leq n-1$, and the vertices $l_{{j,n-1}}$ and $l_{{j+1,1}}$ are the endpoints of the edge $c_{{j,n-1}}$.

    \subsection{Homotopy type of the perfect matching complex of general line tiling of even-sided polygon}

    We now discuss the main result of this section.

    \begin{theorem}\label{thm: homotopy of 2n-gon}
        The perfect matching complex of line tiling of $k-2n-$gon, \textit{i.e.}, $\pmplt$, is contractible, \textit{i.e.},
        $$\pmplt \simeq *.$$
    \end{theorem}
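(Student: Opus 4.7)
I would apply discrete Morse theory via a sequence of element pairings (Theorem~\ref{theorem 2}) on the face poset of $\pmplt$, with the goal of matching every non-empty face; then Theorem~\ref{theorem 1} gives contractibility. The intuition is that because $n \geq 3$, each polygon $P_j$ has at least two edges on each of its top and bottom boundaries, giving enough ``slack'' to redirect any partial matching through a chosen pivot edge.

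Before pairing, I would first establish two structural lemmas in the spirit of Section~\ref{Section 3}. An analogue of Lemma~\ref{Lemma 1} would characterize the minimal bad matchings of $\mathcal{E}_{n,k}$: within a single polygon $P_j$, these are pairs $\{b_{j,s}, c_{j,t}\}$ whose removal leaves an odd number of uncovered vertices on one side; similar bad matchings arise spanning two consecutive polygons across an attachment edge $a_{j+1}$. An analogue of Lemma~\ref{Lemma 2} would then say that in any perfect matching, an attachment edge $a_j$ is present if and only if the two edges of its polygon adjacent to $a_j$ (on the top and bottom) are both absent. These two facts pin down exactly which partial matchings extend.

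The pairing itself I would construct polygon-by-polygon. For each $P_j$ in turn, I would pick a pivot top edge, for instance $b_{j,2}$, and pair each face $\sigma$ not containing it with $\sigma \cup \{b_{j,2}\}$ whenever this is still a face of the complex. The surviving critical cells are exactly those $\sigma$ for which $b_{j,2}$ either conflicts with an edge of $\sigma$ or produces a set that fails to extend to a perfect matching. I would then pair these further using a complementary edge such as $c_{j,2}$, and then iterate with other edges of $P_j$ chosen to mop up the remaining critical cells, using the fact (from the structural lemmas) that any residual configuration forces a matched edge on the opposite side. After processing $P_j$ I would move on to $P_{j+1}$, and the sequential construction in Theorem~\ref{theorem 2} automatically combines these local pairings into a single acyclic pairing on the whole complex.

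The main obstacle will be proving \emph{exhaustivity}: that after the pairings are performed on all polygons, only the empty face survives as a critical cell. This requires a careful case analysis of how the bad matchings characterized above interact with the order of element pairings, especially at the boundary between consecutive polygons, where the attachment edge $a_{j+1}$ constrains matchings of both $P_j$ and $P_{j+1}$ simultaneously. A secondary subtlety is the case $n = 3$ (hexagonal tiling): a single hexagon has $\mathcal{M}_p(C_6) \simeq \mathbb{S}^0$, so the contractibility argument is genuinely using $k \geq 2$, and the hexagonal case is the tightest instance of the pivot/complementary-edge bookkeeping; for $n \geq 4$ the extra interior edges make things easier, so I would verify $n=3$ in detail and then observe that the argument goes through uniformly for larger $n$.
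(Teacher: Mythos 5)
Your overall strategy (a sequence of element pairings combined with \Cref{theorem 1} and \Cref{theorem 2}) is the same as the paper's, but the specific pairing sequence you propose is different and the part of the argument that carries all the content is missing. The paper does not process the tiling polygon-by-polygon: it uses exactly \emph{three} element pairings, all concentrated at the left end of the tiling --- first $a_{1}$, then $b_{1,1}$, then a single edge of the \emph{second} polygon ($c_{2,n-2}$ if $n$ is even, $c_{2,n-1}$ if $n$ is odd) --- and shows by a case analysis that after the third pairing no critical cells remain at all. The reason this works is that pairing with the degree-two boundary edge $a_{1}$ and then with $b_{1,1}$ forces every surviving cell $\sigma$ to contain $b_{1,1}$ and no other edge of $P_{1}$, and to avoid all odd-indexed edges of $P_{2}$; the third edge is then chosen so that $\sigma\cup\{c_{2,n-2}\}$ (resp.\ $\sigma\cup\{c_{2,n-1}\}$) is \emph{always} again a face satisfying the same constraints, so the last pairing is exhaustive. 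Your plan instead pivots on the interior edge $b_{j,2}$ of each polygon in turn and defers the exhaustivity claim to ``a careful case analysis,'' but that case analysis \emph{is} the proof: nothing in your outline shows that the residual critical cells after pairing with $b_{j,2}$ and $c_{j,2}$ can actually all be mopped up, nor which edges would do it, and it is not evident that your sequence terminates with zero critical cells. As written, the proposal is a plausible plan rather than a proof.

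Two smaller points. First, the paper never needs a full analogue of \Cref{Lemma 1} for $\mathcal{E}_{n,k}$; it gets by with local parity observations (an odd number of vertices trapped between two chosen edges), and a complete classification of the minimal bad matchings of a general $2n$-gon line tiling would be substantially more work than the grid case, so building your argument on such a lemma adds difficulty rather than removing it. Second, your proposed analogue of \Cref{Lemma 2} is imprecise: the attachment edge $a_{j}$ meets \emph{four} boundary edges (two from $P_{j-1}$ and two from $P_{j}$), so ``$a_{j}\in\tau$ iff the two adjacent edges of its polygon are absent'' is only trivially true in one direction and false as stated in the other. Your observation that $k\ge 2$ is essential (a single $2n$-gon has perfect matching complex homotopy equivalent to $\mathbb{S}^{0}$) is correct and consistent with the paper's hypothesis.
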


    \begin{proof}
        To prove this result, we perform the following three element pairings to prove this result:

        \begin{itemize}
            \item \textbf{When $n$ is even}, we start element pairing using $a_{1}$, followed by $b_{{1,1}}$ and $c_{2,n-2}$, respectively.
            \item \textbf{When $n$ is odd}, we start element pairing using $a_{1}$, followed by $b_{{1,1}}$ and $c_{{2,n-1}}$, respectively.
        \end{itemize}
        
        In both cases, we are left with no critical cells at the end; hence, this will prove $\pmplt$ is contractible. A visualization of this result can be understood using the \Cref{fig: Hexagon line tiling}, \Cref{fig: Octagon line tiling} and \Cref{Decagon line tiling}, in which we perform the element pairing using the edges marked with \textcolor{red}{red}, \textcolor{green}{green} and \textcolor{cyan}{blue}, respectively.

        \begin{figure}[H]
        \vspace{-4em}
        \centering
        \hspace*{12.5em}
        \begin{tikzpicture}[line cap=round,line join=round,>=triangle 45,x=0.4cm,y=0.4cm]
            \clip(-0.1,-7.22) rectangle (31.54,7.82);
                \draw [line width=2pt,color=ffqqqq] (0,2)-- (0,0);
                \draw [line width=1pt] (0,0)-- (1.7320508075688774,-1);
                \draw [line width=1pt] (1.7320508075688774,-1)-- (3.4641016151377553,0);
                \draw [line width=1pt] (3.4641016151377553,0)-- (3.4641016151377557,2);
                \draw [line width=1pt] (3.4641016151377557,2)-- (1.732050807568879,3);
                \draw [line width=2pt,color=qqffqq] (1.732050807568879,3)-- (0,2);
                \draw [line width=1pt] (3.4641016151377557,2)-- (3.4641016151377553,0);
                \draw [line width=1pt] (3.4641016151377553,0)-- (5.196152422706631,-1);
                \draw [line width=2pt,color=qqffff] (5.196152422706631,-1)-- (6.928203230275508,0);
                \draw [line width=1pt] (6.928203230275508,0)-- (6.928203230275509,2);
                \draw [line width=1pt] (6.928203230275509,2)-- (5.196152422706634,3);
                \draw [line width=1pt] (5.196152422706634,3)-- (3.4641016151377557,2);
                \draw [line width=1pt] (6.928203230275509,2)-- (6.928203230275508,0);
                \draw [line width=1pt] (6.928203230275508,0)-- (8.660254037844386,-1);
                \draw [line width=1pt] (8.660254037844386,-1)-- (10.392304845413262,0);
                \draw [line width=1pt] (10.392304845413262,0)-- (10.392304845413262,2);
                \draw [line width=1pt] (10.392304845413262,2)-- (8.660254037844387,3);
                \draw [line width=1pt] (8.660254037844387,3)-- (6.928203230275509,2);
                \draw [line width=1pt] (13,2)-- (13,0);
                \draw [line width=1pt] (13,0)-- (14.732050807568879,-1);
                \draw [line width=1pt] (14.732050807568879,-1)-- (16.464101615137757,0);
                \draw [line width=1pt] (16.464101615137757,0)-- (16.464101615137757,2);
                \draw [line width=1pt] (16.464101615137757,2)-- (14.732050807568879,3);
                \draw [line width=1pt] (14.732050807568879,3)-- (13,2);
                \draw (10.7,1.3) node[anchor=north west] {\Large$\dots$};
                \begin{scriptsize}
                \draw [fill=xdxdff] (0,2) circle (1.5pt);
                \draw [fill=xdxdff] (0,0) circle (1.5pt);
                \draw [fill=xdxdff] (1.7320508075688774,-1) circle (1.5pt);
                \draw [fill=xdxdff] (3.4641016151377553,0) circle (1.5pt);
                \draw [fill=xdxdff] (3.4641016151377557,2) circle (1.5pt);
                \draw [fill=xdxdff] (1.732050807568879,3) circle (1.5pt);
                \draw [fill=xdxdff] (5.196152422706631,-1) circle (1.5pt);
                \draw [fill=xdxdff] (6.928203230275508,0) circle (1.5pt);
                \draw [fill=xdxdff] (6.928203230275509,2) circle (1.5pt);
                \draw [fill=xdxdff] (5.196152422706634,3) circle (1.5pt);
                \draw [fill=xdxdff] (8.660254037844386,-1) circle (1.5pt);
                \draw [fill=xdxdff] (10.392304845413262,0) circle (1.5pt);
                \draw [fill=xdxdff] (10.392304845413262,2) circle (1.5pt);
                \draw [fill=xdxdff] (8.660254037844387,3) circle (1.5pt);
                \draw [fill=ududff] (13,2) circle (1.5pt);
                \draw [fill=ududff] (13,0) circle (1.5pt);
                \draw [fill=xdxdff] (14.732050807568879,-1) circle (1.5pt);
                \draw [fill=xdxdff] (16.464101615137757,0) circle (1.5pt);
                \draw [fill=xdxdff] (16.464101615137757,2) circle (1.5pt);
                \draw [fill=xdxdff] (14.732050807568879,3) circle (1.5pt);
                \end{scriptsize}
            \end{tikzpicture}
        \vspace{-7.5em}
        \caption{Hexagon line tiling}
        \label{fig: Hexagon line tiling}
        \end{figure}
    
        \begin{figure}[H]
        \vspace{-4em}
        \centering
        \hspace{1em}
        \begin{tikzpicture}[line cap=round,line join=round,>=triangle 45,x=0.3cm,y=0.3cm]
            \clip(-0.2,-6.76) rectangle (24.04,8.28);
            
                \draw [line width=2pt,color=ffqqqq] (0,2)-- (0,0);
                \draw [line width=1pt] (0,0)-- (1.414213562373095,-1.414213562373095);
                \draw [line width=1pt] (1.414213562373095,-1.414213562373095)-- (3.4142135623730945,-1.414213562373095);
                \draw [line width=1pt] (3.4142135623730945,-1.414213562373095)-- (4.82842712474619,0);
                \draw [line width=1pt] (4.82842712474619,0)-- (4.82842712474619,2);
                \draw [line width=1pt] (4.82842712474619,2)-- (3.4142135623730954,3.414213562373095);
                \draw [line width=1pt] (3.4142135623730954,3.414213562373095)-- (1.4142135623730956,3.4142135623730954);
                \draw [line width=2pt,color=qqffqq] (1.4142135623730956,3.4142135623730954)-- (0,2);
                \draw [line width=1pt] (4.82842712474619,2)-- (4.82842712474619,0);
                \draw [line width=1pt] (4.82842712474619,0)-- (6.242640687119284,-1.4142135623730945);
                \draw [line width=2pt,color=qqffff] (6.242640687119284,-1.4142135623730945)-- (8.242640687119284,-1.414213562373095);
                \draw [line width=1pt] (8.242640687119284,-1.414213562373095)-- (9.656854249492378,0);
                \draw [line width=1pt] (9.656854249492378,0)-- (9.656854249492378,2);
                \draw [line width=1pt] (9.656854249492378,2)-- (8.242640687119284,3.4142135623730936);
                \draw [line width=1pt] (8.242640687119284,3.4142135623730936)-- (6.242640687119285,3.414213562373094);
                \draw [line width=1pt] (6.242640687119285,3.414213562373094)-- (4.82842712474619,2);
                \draw [line width=1pt] (9.656854249492378,2)-- (9.656854249492378,0);
                \draw [line width=1pt] (9.656854249492378,0)-- (11.071067811865472,-1.414213562373095);
                \draw [line width=1pt] (11.071067811865472,-1.414213562373095)-- (13.071067811865472,-1.414213562373095);
                \draw [line width=1pt] (13.071067811865472,-1.414213562373095)-- (14.485281374238566,0);
                \draw [line width=1pt] (14.485281374238566,0)-- (14.485281374238566,2);
                \draw [line width=1pt] (14.485281374238566,2)-- (13.071067811865472,3.414213562373093);
                \draw [line width=1pt] (13.071067811865472,3.414213562373093)-- (11.071067811865474,3.414213562373093);
                \draw [line width=1pt] (11.071067811865474,3.414213562373093)-- (9.656854249492378,2);
                \draw [line width=1pt] (17.36,2)-- (17.36,0);
                \draw [line width=1pt] (17.36,0)-- (18.774213562373095,-1.4142135623730958);
                \draw [line width=1pt] (18.774213562373095,-1.4142135623730958)-- (20.774213562373095,-1.4142135623730958);
                \draw [line width=1pt] (20.774213562373095,-1.4142135623730958)-- (22.18842712474619,0);
                \draw [line width=1pt] (22.18842712474619,0)-- (22.18842712474619,2);
                \draw [line width=1pt] (22.18842712474619,2)-- (20.774213562373095,3.414213562373096);
                \draw [line width=1pt] (20.774213562373095,3.414213562373096)-- (18.774213562373095,3.414213562373096);
                \draw [line width=1pt] (18.774213562373095,3.414213562373096)-- (17.36,2);
                \draw (14.5,1.5) node[anchor=north west] {\Large$\dots$};
                \begin{scriptsize}
                \draw [fill=xdxdff] (0,2) circle (1.5pt);
                \draw [fill=xdxdff] (0,0) circle (1.5pt);
                \draw [fill=xdxdff] (1.414213562373095,-1.414213562373095) circle (1.5pt);
                \draw [fill=xdxdff] (3.4142135623730945,-1.414213562373095) circle (1.5pt);
                \draw [fill=xdxdff] (4.82842712474619,0) circle (1.5pt);
                \draw [fill=xdxdff] (4.82842712474619,2) circle (1.5pt);
                \draw [fill=xdxdff] (3.4142135623730954,3.414213562373095) circle (1.5pt);
                \draw [fill=xdxdff] (1.4142135623730956,3.4142135623730954) circle (1.5pt);
                \draw [fill=xdxdff] (6.242640687119284,-1.4142135623730945) circle (1.5pt);
                \draw [fill=xdxdff] (8.242640687119284,-1.414213562373095) circle (1.5pt);
                \draw [fill=xdxdff] (9.656854249492378,0) circle (1.5pt);
                \draw [fill=xdxdff] (9.656854249492378,2) circle (1.5pt);
                \draw [fill=xdxdff] (8.242640687119284,3.4142135623730936) circle (1.5pt);
                \draw [fill=xdxdff] (6.242640687119285,3.414213562373094) circle (1.5pt);
                \draw [fill=xdxdff] (11.071067811865472,-1.414213562373095) circle (1.5pt);
                \draw [fill=xdxdff] (13.071067811865472,-1.414213562373095) circle (1.5pt);
                \draw [fill=xdxdff] (14.485281374238566,0) circle (1.5pt);
                \draw [fill=xdxdff] (14.485281374238566,2) circle (1.5pt);
                \draw [fill=xdxdff] (13.071067811865472,3.414213562373093) circle (1.5pt);
                \draw [fill=xdxdff] (11.071067811865474,3.414213562373093) circle (1.5pt);
                \draw [fill=xdxdff] (17.36,2) circle (1.5pt);
                \draw [fill=xdxdff] (17.36,0) circle (1.5pt);
                \draw [fill=xdxdff] (18.774213562373095,-1.4142135623730958) circle (1.5pt);
                \draw [fill=xdxdff] (20.774213562373095,-1.4142135623730958) circle (1.5pt);
                \draw [fill=xdxdff] (22.18842712474619,0) circle (1.5pt);
                \draw [fill=xdxdff] (22.18842712474619,2) circle (1.5pt);
                \draw [fill=xdxdff] (20.774213562373095,3.414213562373096) circle (1.5pt);
                \draw [fill=xdxdff] (18.774213562373095,3.414213562373096) circle (1.5pt);
                \end{scriptsize}
            \end{tikzpicture}
            \vspace{-4em}
        \caption{Octagon line tiling}
        \label{fig: Octagon line tiling}
        \end{figure}
    
        \begin{figure}[H]
        \vspace{-3em}
        \centering
        \hspace{0.5em}
        \begin{tikzpicture}[line cap=round,line join=round,>=triangle 45,x=0.27cm,y=0.27cm]
            \clip(-0.374,-8.594) rectangle (28.622,7.95);
                \draw [line width=2pt,color=ffqqqq] (0,2)-- (0,0);
                \draw [line width=1pt] (0,0)-- (1.1755705045849458,-1.618033988749894);
                \draw [line width=1pt] (1.1755705045849458,-1.618033988749894)-- (3.0776835371752522,-2.2360679774997894);
                \draw [line width=1pt] (3.0776835371752522,-2.2360679774997894)-- (4.979796569765559,-1.618033988749895);
                \draw [line width=1pt] (4.979796569765559,-1.618033988749895)-- (6.155367074350505,0);
                \draw [line width=1pt] (6.155367074350505,0)-- (6.155367074350506,2);
                \draw [line width=1pt] (6.155367074350506,2)-- (4.9797965697655595,3.618033988749894);
                \draw [line width=1pt] (4.9797965697655595,3.618033988749894)-- (3.0776835371752536,4.236067977499789);
                \draw [line width=1pt] (3.0776835371752536,4.236067977499789)-- (1.1755705045849467,3.618033988749895);
                \draw [line width=2pt,color=qqffqq] (1.1755705045849467,3.618033988749895)-- (0,2);
                \draw [line width=1pt] (6.155367074350506,2)-- (6.155367074350505,0);
                \draw [line width=1pt] (6.155367074350505,0)-- (7.33093757893545,-1.6180339887498951);
                \draw [line width=1pt] (7.33093757893545,-1.6180339887498951)-- (9.233050611525757,-2.2360679774997907);
                \draw [line width=1pt] (9.233050611525757,-2.2360679774997907)-- (11.135163644116064,-1.6180339887498973);
                \draw [line width=2pt,color=qqffff] (11.135163644116064,-1.6180339887498973)-- (12.31073414870101,0);
                \draw [line width=1pt] (12.31073414870101,0)-- (12.310734148701012,2);
                \draw [line width=1pt] (12.310734148701012,2)-- (11.135163644116066,3.6180339887498913);
                \draw [line width=1pt] (11.135163644116066,3.6180339887498913)-- (9.23305061152576,4.236067977499787);
                \draw [line width=1pt] (9.23305061152576,4.236067977499787)-- (7.330937578935454,3.618033988749894);
                \draw [line width=1pt] (7.330937578935454,3.618033988749894)-- (6.155367074350506,2);
                \draw [line width=1pt] (12.310734148701012,2)-- (12.31073414870101,0);
                \draw [line width=1pt] (12.31073414870101,0)-- (13.486304653285956,-1.6180339887498993);
                \draw [line width=1pt] (13.486304653285956,-1.6180339887498993)-- (15.388417685876261,-2.2360679774997965);
                \draw [line width=1pt] (15.388417685876261,-2.2360679774997965)-- (17.29053071846657,-1.6180339887499033);
                \draw [line width=1pt] (17.29053071846657,-1.6180339887499033)-- (18.466101223051517,0);
                \draw [line width=1pt] (18.466101223051517,0)-- (18.46610122305152,2);
                \draw [line width=1pt] (18.46610122305152,2)-- (17.290530718466574,3.618033988749886);
                \draw [line width=1pt] (17.290530718466574,3.618033988749886)-- (15.388417685876268,4.236067977499783);
                \draw [line width=1pt] (15.388417685876268,4.236067977499783)-- (13.486304653285961,3.61803398874989);
                \draw [line width=1pt] (13.486304653285961,3.61803398874989)-- (12.310734148701012,2);
                \draw [line width=1pt] (21.38,2)-- (21.38,0);
                \draw [line width=1pt] (21.38,0)-- (22.555570504584946,-1.6180339887498953);
                \draw [line width=1pt] (22.555570504584946,-1.6180339887498953)-- (24.457683537175253,-2.2360679774997907);
                \draw [line width=1pt] (24.457683537175253,-2.2360679774997907)-- (26.35979656976556,-1.6180339887498953);
                \draw [line width=1pt] (26.35979656976556,-1.6180339887498953)-- (27.535367074350507,0);
                \draw [line width=1pt] (27.535367074350507,0)-- (27.535367074350507,2);
                \draw [line width=1pt] (27.535367074350507,2)-- (26.35979656976556,3.6180339887498953);
                \draw [line width=1pt] (26.35979656976556,3.6180339887498953)-- (24.457683537175253,4.236067977499791);
                \draw [line width=1pt] (24.457683537175253,4.236067977499791)-- (22.555570504584946,3.618033988749896);
                \draw [line width=1pt] (22.555570504584946,3.618033988749896)-- (21.38,2);
                \draw (18.3,1.5) node[anchor=north west] {\Large$\dots$};
                \begin{scriptsize}
                \draw [fill=xdxdff] (0,2) circle (1.5pt);
                \draw [fill=xdxdff] (0,0) circle (1.5pt);
                \draw [fill=xdxdff] (1.1755705045849458,-1.618033988749894) circle (1.5pt);
                \draw [fill=xdxdff] (3.0776835371752522,-2.2360679774997894) circle (1.5pt);
                \draw [fill=xdxdff] (4.979796569765559,-1.618033988749895) circle (1.5pt);
                \draw [fill=xdxdff] (6.155367074350505,0) circle (1.5pt);
                \draw [fill=xdxdff] (6.155367074350506,2) circle (1.5pt);
                \draw [fill=xdxdff] (4.9797965697655595,3.618033988749894) circle (1.5pt);
                \draw [fill=xdxdff] (3.0776835371752536,4.236067977499789) circle (1.5pt);
                \draw [fill=xdxdff] (1.1755705045849467,3.618033988749895) circle (1.5pt);
                \draw [fill=xdxdff] (7.33093757893545,-1.6180339887498951) circle (1.5pt);
                \draw [fill=xdxdff] (9.233050611525757,-2.2360679774997907) circle (1.5pt);
                \draw [fill=xdxdff] (11.135163644116064,-1.6180339887498973) circle (1.5pt);
                \draw [fill=xdxdff] (12.31073414870101,0) circle (1.5pt);
                \draw [fill=xdxdff] (12.310734148701012,2) circle (1.5pt);
                \draw [fill=xdxdff] (11.135163644116066,3.6180339887498913) circle (1.5pt);
                \draw [fill=xdxdff] (9.23305061152576,4.236067977499787) circle (1.5pt);
                \draw [fill=xdxdff] (7.330937578935454,3.618033988749894) circle (1.5pt);
                \draw [fill=xdxdff] (13.486304653285956,-1.6180339887498993) circle (1.5pt);
                \draw [fill=xdxdff] (15.388417685876261,-2.2360679774997965) circle (1.5pt);
                \draw [fill=xdxdff] (17.29053071846657,-1.6180339887499033) circle (1.5pt);
                \draw [fill=xdxdff] (18.466101223051517,0) circle (1.5pt);
                \draw [fill=xdxdff] (18.46610122305152,2) circle (1.5pt);
                \draw [fill=xdxdff] (17.290530718466574,3.618033988749886) circle (1.5pt);
                \draw [fill=xdxdff] (15.388417685876268,4.236067977499783) circle (1.5pt);
                \draw [fill=xdxdff] (13.486304653285961,3.61803398874989) circle (1.5pt);
                \draw [fill=xdxdff] (21.38,2) circle (1.5pt);
                \draw [fill=xdxdff] (21.38,0) circle (1.5pt);
                \draw [fill=xdxdff] (22.555570504584946,-1.6180339887498953) circle (1.5pt);
                \draw [fill=xdxdff] (24.457683537175253,-2.2360679774997907) circle (1.5pt);
                \draw [fill=xdxdff] (26.35979656976556,-1.6180339887498953) circle (1.5pt);
                \draw [fill=xdxdff] (27.535367074350507,0) circle (1.5pt);
                \draw [fill=xdxdff] (27.535367074350507,2) circle (1.5pt);
                \draw [fill=xdxdff] (26.35979656976556,3.6180339887498953) circle (1.5pt);
                \draw [fill=xdxdff] (24.457683537175253,4.236067977499791) circle (1.5pt);
                \draw [fill=xdxdff] (22.555570504584946,3.618033988749896) circle (1.5pt);
                \end{scriptsize}
            \end{tikzpicture}
            \vspace{-4.5em}
            \caption{Decagon line tiling}
            \label{Decagon line tiling}
    
        \end{figure}

        Observe that, for both cases (\textit{i.e.}, when $n$ is even and odd), the first two-element pairings are the same, \textit{i.e.}, first with $a_{1}$ and then with $b_{{1,1}}$. We first interpret the critical cells obtained after element pairing with these two elements and then do the last element pairing with the suitable edges.
        \vspace{0.5cm}
        
        \noindent\textbf{Element pairing using $a_{1}$:} Recall that $C_{a_{1}}$ is the set of critical cells obtained after element pairing with $a_{1}$. If $\sigma \in C_{a_{1}}$ then $a_{1} \notin \sigma$, implying $\sigma \cup \left\{a_{1}\right\} \notin \pmplt$ which will mean either of the following (not both):
        \begin{enumerate}
        \itemindent5em
            \item[\textbf{Condition 1.}]\label{cond 1} $\sigma \cup \left\{a_{1}\right\}$ is not a matching implying $\sigma \cap \left\{b_{{1,1}},c_{{1,1}}\right\} \neq \emptyset$, or;
            \item[\textbf{Condition 2.}]\label{cond 2} $\sigma \cup \left\{a_{1}\right\}$ is a bad matching, implying $\sigma \cap \left\{b_{{1,1}},c_{{1,1}}\right\} = \emptyset$.
        \end{enumerate}

        Thus,
        \[C_{a_{1}} = \left\{ \sigma \in \pmplt\ |\ a_{1} \notin \sigma,\ \sigma\cup \left\{a_{1}\right\} \notin \pmplt \right\}.\]

        \noindent{\textbf{Element pairing using $b_{{1,1}}$:}} We now define element pairing using $b_{{1,1}}$ on the elements of $C_{a_{1}}$. If $\sigma \in C_{b_{{1,1}}}$, then at least one of the following two conditions must hold:
        \begin{enumerate}[label = \Alph*.]
            \item $b_{{1,1}}\notin \sigma$ and $\sigma\cup \left\{b_{{1,1}}\right\} \notin C_{a_{1}}$, which mean at least one of the following:
            \begin{enumerate}[label = \roman*.]
                \item $\sigma\cup \left\{b_{{1,1}}\right\} \notin \pmplt$.\\
                Like before, this could imply that either $\sigma\cup \left\{b_{{1,1}}\right\}$ is not a matching, or it is a bad matching. Let us observe these two cases separately as follows:

                \begin{itemize}
                \itemsep1em
                    \item If $\sigma \cup \left\{b_{{1,1}}\right\}$ is not a matching, then $\sigma \cap \left\{a_{1},\ b_{{1,2}}\right\} \neq \phi$. We know that $a_{1} \notin \sigma$ which implies $b_{{1,2}} \in \sigma$.
                    
                    If $\sigma$ satisfies the \hyperref[cond 1]{Condition 1}, then we must have $c_{{1,1}} \in \sigma$. However, this leads to a contradiction, as no edge will be left to cover the vertex $u_{{1,1}}$ when $\sigma$ is extended to form a perfect matching.
                    
                    Similarly, if $\sigma$ satisfies \hyperref[cond 2]{Condition 2}, we have $c_{{1,1}} \notin \sigma$. This is not true because $b_{{1,1}} \notin \sigma$, implies that in order to cover the vertex $u_{{1,1}}$, we need to include $a_{1}$ along with $\sigma$ when extending it to form a perfect matching. In which case, $\sigma \cup \left\{ a_{1}\right\} \in \pmplt$, contradicting $\sigma \in C_{a_{1}}$.

                    \item If $\sigma \cup \left\{b_{{1,1}}\right\}$ is a bad matching, then $\sigma \cap \left\{a_{1},\ b_{{1,2}}\right\} = \phi$.

                    If $\sigma$ satisfies the \hyperref[cond 1]{Condition 1}, then $c_{{1,1}} \in \sigma$. Note that the only possible edge left to cover the vertex $u_{{1,1}}$ is $b_{{1,1}}$ when $\sigma$ is extended to form a perfect matching. In which case $\sigma \cup \left\{b_{{1,1}} \right\} \in \pmplt$, which is a contradiction.

                    Similarly, if $\sigma$ satisfies \hyperref[cond 2]{Condition 2}, then to cover the vertex $u_{{1,1}}$, we need to either take $a_{1}$ or $b_{{1,1}}$ along with $\sigma$, when it is extended to form a perfect matching. If we take $a_{1}$, then $\sigma \cup \left\{a_{1}\right\} in \pmplt$ will contradict $\sigma \in C_{a_{1}}$ and, if we take $b_{{1,1}}$, then $\sigma \cup \left\{b_{{1,1}}\right\} \in \pmplt$, contradicting our assumption.
                \end{itemize}
                Hence, this case is not possible.
                \item $\sigma\cup \left\{b_{{1,1}}\right\} \in \pmplt$, $\sigma \cup \left\{b_{{1,1}}\right\} \notin C_{a_{1}}$.\\
                This would imply that, $\sigma \cup \left\{a_{1},\ b_{{1,1}}\right\} \in \pmplt$. However, this is a contradiction to $\pmplt$ being simplicial complex as $\sigma \cup \left\{a_{1}\right\} \notin \pmplt$. Hence, this case is not possible.                
            \end{enumerate}
            \item $b_{{1,1}} \in \sigma$, $\sigma \setminus \left\{b_{{1,1}}\right\} \notin C_{a_{1}}$.

            The latter would imply two cases as earlier. The first case is $\sigma \setminus \left\{b_{{1,1}}\right\} \notin \pmplt$, but this will contradict $\pmplt$ being a simplicial complex as $\sigma \in \pmplt$. Therefore, this case is not possible. The other case is when $\sigma \setminus \left\{b_{{1,1}}\right\} \in \pmplt$ and $\sigma \setminus \left\{b_{{1,1}}\right\} \notin C_{a_{1}}$, \textit{i.e.}, $\sigma \cup \left\{a_{1}\right\} \setminus \left\{b_{{1,1}}\right\} \in \pmplt$. This would imply that $c_{{1,1}} \notin \sigma$, since $\sigma \cup \left\{a_{1}\right\} \notin \pmplt$. Hence, this case is possible.
        \end{enumerate}
        Thus,
        \begin{align*}
            C_{b_{{1,1}}} = \left\{ \sigma \in \pmplt\ \middle|\ \begin{array}{c} 
                a_{1} \notin \sigma,\ b_{{1,1}} \in \sigma,\ \sigma\cup \left\{a_{1}\right\} \notin \pmplt,\\
                \sigma\cup\left\{a_{1}\right\}\setminus\left\{b_{{1,1}}\right\} \in \pmplt \end{array}\right\}.
        \end{align*}
    
    Observe here that the property $\sigma \cup \left\{a_{1}\right\} \notin \pmplt$ will give the same result we got earlier when $n$ is even or odd. However, $\sigma \cup \left\{a_{1}\right\} \setminus \left\{b_{{1,1}}\right\} \in \pmplt$ will give slightly different results in case when it is even and when it is odd. Note here that $b_{{1,1}}\in\sigma$ implies that $b_{{1,2}} \notin \sigma$ in either case.

    The proof will be similar till this point, irrespective of what $n$ is. From now on, we interpret the properties of $C_{b_{{1,1}}}$ by bifurcating our proof for the case when $n$ is even and odd.

    \begin{itemize}
        \item \textbf{When $n$ is even.}
         
            For the case when $n$ is even, using the property $b_{{1,1}} \in \sigma$, we conclude the following
            \begin{enumerate}[label = \roman*.]
            \itemsep0em
                \item $b_{1,i} \notin \sigma$, for $i = 4,\ 6,\dots,\ n-2$ ,\item $c_{1,j} \notin \sigma$, for $j = 2,\ 4,\ 6, \dots,\ n-2$,
                \item $a_{2} \notin \sigma$.
            \end{enumerate}
            
            This is because, for every edge here and $b_{{1,1}}$, there will be an odd number of vertices in between them (e.g., $b_{{1,1}}$ and $b_{{1,6}}$ have three vertices, namely, $u_{{1,3}}$, $u_{{1,4}}$ and $u_{{1,5}}$ in between). Thus, when $\sigma$ is extended to form a perfect matching, a vertex will be left uncovered among the vertices in between. Similarly, using the property $\sigma \cup \left\{a_{1}\right\}\setminus\left\{b_{{1,1}}\right\} \in \pmplt$, we conclude that, $b_{{1,k}},\ c_{{1,k}} \notin \sigma$, for $k = 3,\ 5,\ \dots,\ n-1$, where instead of $b_{{1,1}}$, we consider the fact that $a_{1} \in\sigma \cup \left\{a_{1}\right\}\setminus\left\{b_{{1,1}}\right\}$. Also, note that $c_{{1,1}}\notin \sigma$. Thus, from the first polygon, no edge is in $\sigma \in C_{b_{{1,1}}}$ except for $b_{{1,1}}$.

            We also need to find the edges from the second polygon that will be in $\sigma$ because it will help to interpret the result after the third-element pairing.

            Recall that, $\sigma \in C_{b_{{1,1}}}$. From the second polygon, again using the previous argument and the property $b_{{1,1}} \in \sigma$, we conclude that the edges $b_{2, i},\ c_{2, i} \notin \sigma$, for $i=1,\ 3,\dots,\ n-1$. However, note here that while considering the edges $b_{2, i}$, consider the upper vertices of the first polygon, and for the edges $c_{2, i}$, consider the lower vertices for our argument.

            Now, the edges $b_{{2,j}},\ c_{{2,j}}$ can be in $\sigma$, for some $j = 2,\ 4,\dots,\ n-2$ and some $\sigma \in C_{b_{{1,1}}}$. We show this for $b_{{2,2}}$ since the other edges will satisfy similar conditions. To have $b_{{2,2}} \in \sigma$, we need to find such $\sigma \in C_{b_{{1,1}}}$, which can be extended to form a perfect matching. This can be easily done, since if $b_{{2,2}} \in \sigma$ for some $\sigma \in C_{b_{{1,1}}}$, then $b_{{1,1}} \in \sigma$ and we can extend $\sigma$ to a perfect matching $\tau$ such that $b_{{1,m}},\ c_{{1,m}} \in \tau$, for $m = 1,\ 3,\dots,\ n-1$ from the first polygon. Note that, this $\sigma$ will also satisfy the condition $\sigma \cup \left\{a_{1}\right\} \setminus \left\{b_{{1,1}}\right\} \in \pmplt$. Furthermore, it is not hard to see that the same perfect matching $\tau$ will work for all those $\sigma \in C_{b_{{1,1}}}$, such that either $b_{{2,t}} \in \sigma$ or $c_{{2,s}} \in \sigma$, where $t = 4,\ 6,\dots,\ n-2$ and $s = 2,\ 4,\dots,\ n-2$.
            
            Note here that, it is not necessary that every $\sigma\in C_{b_{{1,1}}}$ will contain at least one of the $b_{{2,j}}$ or $c_{{2,j}}$, \textit{i.e.}, there might be a $\sigma \in C_{b_{{1,1}}}$, not having any edge $b_{{2,j}}\ \text{or}\ c_{{2,j}}$, for $j = 2,\ 4,\dots,\ n-2$. Thus, from the second polygon, the edges $b_{{2,i}},\ c_{{2,i}} \notin \sigma$, for $i=1,\ 3,\dots,\ n-1$, and the edges $b_{{2,j}},\ c_{{2,j}}$, for $j = 2,\ 4,\dots,\ n-2$, can be there in some $\sigma \in C_{b_{{1,1}}}$.

            \subsubsection*{Element pairing using $c_{{2,n-2}}$:}
    
            We now have all the necessary information for defining the last element pairing using $c_{{2,n-2}}$ on the elements of $C_{b_{{1,1}}}$. If an element $\sigma \in C_{b_{{1,1}}}$ is left unpaired after element pairing with $c_{{2,n-2}}$, then at least one of the following two conditions must hold:
        
            \begin{enumerate}[label = \Alph*.]
                \item $c_{{2,n-2}}\notin \sigma$ and $\sigma\cup \left\{c_{{2,n-2}}\right\} \notin C_{b_{{1,1}}}$, which means at least one of the following:
                \begin{enumerate}[label = \roman*.]
                    \item $\sigma\cup \left\{c_{{2,n-2}}\right\} \notin \pmplt$.\\
                    Note that, when $\sigma$ is extended to form a perfect matching, say $\tau$, then $\tau$ must contain $c_{{2,n-2}}$ to cover the vertex $l_{{2,n-2}}$ (or $l_{{2,n-1}}$) since $\sigma \cap \left\{c_{{2,n-3}}, c_{{2,n-1}}\right\} = \emptyset$. Thus, $\sigma \cup \left\{c_{{2,n-2}}\right\} \in \pmplt$ will always hold. Hence, this case is not possible.
                    
                    \item $\sigma\cup \left\{c_{{2,n-2}}\right\} \in \pmplt$ and $\sigma \cup \left\{c_{{2,n-2}}\right\} \notin C_{a_{1}}$.\\
                    This would imply that, $\sigma \cup \left\{ a_{1},\ c_{{2,n-2}}\right\} \in \pmplt$. However, this is a contradiction to $\pmplt$ being simplicial complex as $\sigma \cup \left\{a_{1}\right\} \notin \pmplt$. Hence, this case is not possible.
    
                    \item $\sigma\cup \left\{c_{{2,n-2}}\right\} \in C_{a_{1}}$ and $\sigma \cup \left\{c_{{2,n-2}}\right\} \notin C_{b_{{1,1}}}$.\\
                    This would imply that $\sigma \cup \left\{ a_{1},\ c_{{2,n-2}} \right\} \setminus \left\{b_{{1,1}}\right\} \notin \pmplt$. However, this cell can be extended to form a perfect matching, say $\tau$, by taking $a_{{2}} \in \tau$ and filling the rest of the edges accordingly. Hence, this case is not possible.
                \end{enumerate}
                
                \item $c_{{2,n-2}}\in\sigma$ and $\sigma\setminus\left\{c_{{2,n-2}}\right\} \notin C_{b_{{1,1}}}$.
    
                Note that $\sigma \setminus \left\{c_{{2,n-2}}\right\} \notin C_{b_{{1,1}}}$ means at least one of the following:
    
                \begin{enumerate}[label = \roman*.]
                    \item $\sigma\setminus\left\{c_{{2,n-2}}\right\} \notin \pmplt$.
                            
                    This case contradicts $\pmplt$ being a simplicial complex, as $\sigma \in \pmplt$. Hence, this case is not possible.
                            
                    \item $\sigma \setminus \left\{c_{{2,n-2}}\right\} \in \pmplt \text{ and } \sigma\setminus\left\{c_{{2,n-2}}\right\} \notin C_{a_{1}}$.
        
                    This would $\sigma \cup \left\{a_{1}\right\} \setminus \left\{c_{{2,n-2}}\right\} \in \pmplt$, which is a contradiction since $b_{{1,1}} \in \sigma$. Hence, this case is not possible.
        
                    \item $\sigma\setminus\left\{c_{{2,n-2}}\right\} \in C_{a_{1}} \text{ and } \sigma\setminus\left\{c_{{2,n-2}}\right\} \notin C_{b_{{1,1}}}$.
        
                    This would imply that, $\sigma \cup \left\{a_{1}\right\} \setminus \left\{b_{{1,1}},\ c_{{2,n-2}}\right\} \notin \pmplt$. This contradicts $\pmplt$ being a simplicial complex as $\sigma \cup \left\{a_{1}\right\} \setminus \left\{b_{{1,1}}\right\} \in \pmplt$ (since $\sigma \in C_{b_{{1,1}}}$). Hence, this case is also not possible.
                \end{enumerate}
            \end{enumerate}
    
            This computation shows no critical cells are left after element pairing with $C_{c_{{2,n-2}}}$. Thus, 
                \begin{align*}
                        C_{c_{{2,n-2}}} = \emptyset.
                \end{align*}

        \item \textbf{When $n$ is odd.}
        
            In this case, using the property $b_{{1,1}} \in \sigma$, we conclude the following:
            \begin{enumerate}[label = \roman*.]
            \itemsep0em
                \item $b_{{1,i}} \notin \sigma$, for $i = 4,\ 6,\dots,\ n-1$
                \item $c_{{1,j}} \notin \sigma$, for $j = 2,\ 4,\ 6, \dots,\ n-1$
            \end{enumerate}
            This follows from the similar argument we used in the earlier case. Similarly, using the property $\sigma \cup \left\{a_{1}\right\}\setminus\left\{b_{{1,1}}\right\} \in \pmplt$, we conclude the following,
    
            \begin{enumerate}[label = \roman*.]
                \item $b_{{1,k}},\ c_{{1,k}} \notin \sigma$, for $k = 3,\ 5,\ \dots,\ n-2$,
                \item $a_{2} \notin \sigma$.
            \end{enumerate}
            
            Thus, from the first polygon, no edge is in $\sigma \in C_{b_{{1,1}}}$ except for $b_{{1,1}}$.
            Moreover, from the second polygon, the edges $b_{{2,i}},\ c_{{2,i}} \notin \sigma$, for $i = 1,\ 3,\dots,\ n-2$, using the property $\sigma \cup \left\{a_{1}\right\} \setminus \left\{b_{{1,1}}\right\} \in \pmplt$. The edges $b_{{2,j}},\ c_{{2,j}} \in \sigma$, for $j= 2,\ 4,\dots,\ n-1$, using the argument we applied earlier. Thus, from the second polygon, the edges $b_{{2,i}},\ c_{{2,i}} \notin \sigma$, for $i=1,\ 3,\dots,\ n-2$, and the edges $b_{{2,j}},\ c_{{2,j}}$, for $j = 2,\ 4,\dots,\ n-1$, can be there in some $\sigma \in C_{b_{{1,1}}}$.
            
            \vspace{0.5cm}    
            \textbf{Element pairing using $c_{{2,n-1}}$:} We define the last element pairing using $c_{{2,n-1}}$ on the elements of $C_{b_{{1,1}}}$. We refrain from going into details of this as the argument is similar to what we have provided in the previous part. Thus,
            \begin{align*}
                C_{c_{{2,n-1}}} = \emptyset.
            \end{align*}

    \end{itemize}

    In both cases, we are left with no critical cells after the last element pairing. Hence, \Cref{Corollary 1} implies that $\pmplt$ is contractible.
    \end{proof}

\section{The curious case of odd-sided polygons}\label{Section 5}

    
    The reason why we are dealing with this case separately in a section is because of how we can arrange an odd-sided polygon in a line. We use different methods for different arrangements to get to our results. The interesting part is that we get the same result for both scenarios, \textit{i.e.}, the simplicial complex associated here is contractible, no matter what arrangement we choose. Furthermore, we separately deal with the case when we take the general line tiling of triangles.

    Let us first discuss what we mean when we say odd-sided polygons can be arranged differently. Recall how we attached even-sided polygons to obtain the line tiling in the previous section. The even-sided polygons were attached along the parallel edges, so the number of edges above and below them is the same. Odd-sided polygons have no parallel edges; thus, we cannot use the same technique here. We can attach odd-sided polygons in the following ways to make them appear like they are arranged in a line and make further interpretations from it:

    \begin{enumerate}
        \item First, attach them so that the number of edges above the attached edges is exactly one more than that of the number of edges below them in every polygon present in the line tiling. Let us call this a \textit{simple arrangement} of odd-sided polygons in line tiling (see \Cref{fig: Simple arrangement}).

        \begin{figure}[H]
            \vspace{-4em}
            \centering
            \hspace*{0.5em}\begin{tikzpicture}[line cap=round,line join=round,>=triangle 45,x=0.3cm,y=0.3cm]
                \clip(-8.853438905441942,-14.03135743151773) rectangle (33.63375748029108,10.210199231662921);
                    \draw [line width=1pt] (0,0)-- (0,2.5);
                    \draw [line width=1pt] (0,2.5)-- (2.0022750483802536,3.753367938940545);
                    \draw [line width=1pt] (2.0022750483802536,3.753367938940545)-- (4,2.5);
                    \draw [line width=1pt] (4,2.5)-- (4.003406850078927,0);
                    \draw [line width=1pt] (4.003406850078927,0)-- (0,0);
                    \draw [line width=1pt] (4,2.5)-- (6,3.75);
                    \draw [line width=1pt] (6,3.75)-- (8,2.5);
                    \draw [line width=1pt] (8,2.5)-- (8,0);
                    \draw [line width=1pt] (8,0)-- (4.003406850078927,0);
                    \draw [line width=1pt] (8,2.5)-- (10,3.75);
                    \draw [line width=1pt] (10,3.75)-- (12.018213373925816,2.604668471970693);
                    \draw [line width=1pt] (12.018213373925816,2.604668471970693)-- (12,0);
                    \draw [line width=1pt] (12,0)-- (8,0);
                    \draw [line width=1pt] (12.018213373925816,2.604668471970693)-- (14.118286398710405,3.6958530300519614);
                    \draw [line width=1pt] (14.118286398710405,3.6958530300519614)-- (16,2.5);
                    \draw [line width=1pt] (16,2.5)-- (16,0);
                    \draw [line width=1pt] (16,0)-- (12,0);
                    \draw [line width=1pt] (16,2.5)-- (17.997760761151742,3.724601932924742);
                    \draw [line width=1pt] (17.997760761151742,3.724601932924742)-- (20,2.5);
                    \draw [line width=1pt] (20,2.5)-- (20,0);
                    \draw [line width=1pt] (20,0)-- (16,0);
                    \draw [line width=1pt] (20,2.5)-- (22,3.75);
                    \draw [line width=1pt] (22,3.75)-- (24,2.5);
                    \draw [line width=1pt] (24,2.5)-- (24,0);
                    \draw [line width=1pt] (24,0)-- (20,0);
                    \draw [line width=1pt] (0,-6)-- (0,-4);
                    \draw [line width=1pt] (0,-4)-- (1.7320508075688772,-3);
                    \draw [line width=1pt] (1.7320508075688772,-3)-- (4.330127018922193,-3);
                    \draw [line width=1pt] (4.330127018922193,-3)-- (6.06217782649107,-4);
                    \draw [line width=1pt] (6.06217782649107,-4)-- (6.06217782649107,-6);
                    \draw [line width=1pt] (0,-6)-- (3.031088913245535,-7.25);
                    \draw [line width=1pt] (3.031088913245535,-7.25)-- (6.06217782649107,-6);
                    \draw [line width=1pt] (6.06217782649107,-4)-- (7.794228634059947,-3);
                    \draw [line width=1pt] (7.794228634059947,-3)-- (10.392304845413264,-3);
                    \draw [line width=1pt] (10.392304845413264,-3)-- (12.12435565298214,-4);
                    \draw [line width=1pt] (12.12435565298214,-4)-- (12.12435565298214,-6);
                    \draw [line width=1pt] (6.06217782649107,-6)-- (9.093266739736606,-7.25);
                    \draw [line width=1pt] (9.093266739736606,-7.25)-- (12.12435565298214,-6);
                    \draw [line width=1pt] (12.12435565298214,-4)-- (13.856406460551018,-3);
                    \draw [line width=1pt] (13.856406460551018,-3)-- (16.454482671904334,-3);
                    \draw [line width=1pt] (16.454482671904334,-3)-- (18.186533479473212,-4);
                    \draw [line width=1pt] (18.186533479473212,-4)-- (18.186533479473212,-6);
                    \draw [line width=1pt] (18.186533479473212,-6)-- (15.155444566227676,-7.25);
                    \draw [line width=1pt] (15.155444566227676,-7.25)-- (12.12435565298214,-6);
                    \draw [line width=1pt] (18.186533479473212,-4)-- (19.918584287042087,-3);
                    \draw [line width=1pt] (19.918584287042087,-3)-- (22.516660498395403,-3);
                    \draw [line width=1pt] (22.516660498395403,-3)-- (24.24871130596428,-4);
                    \draw [line width=1pt] (24.24871130596428,-4)-- (24.24871130596428,-6);
                    \draw [line width=1pt] (21.217622392718745,-7.25)-- (24.24871130596428,-6);
                    \draw [line width=1pt] (21.217622392718745,-7.25)-- (18.186533479473212,-6);
                    \draw [line width=1pt] (0,-4)-- (-1.7320508075688772,-3);
                    \draw [line width=1pt] (-1.7320508075688772,-3)-- (-4.330127018922193,-3);
                    \draw [line width=1pt] (-4.330127018922193,-3)-- (-6.06217782649107,-4);
                    \draw [line width=1pt] (-6.06217782649107,-4)-- (-6.06217782649107,-6);
                    \draw [line width=1pt] (0,-6)-- (-3.031088913245535,-7.25);
                    \draw [line width=1pt] (-3.031088913245535,-7.25)-- (-6.06217782649107,-6);
                    \draw [line width=1pt] (24.24871130596428,-4)-- (25.980762113533157,-3);
                    \draw [line width=1pt] (25.980762113533157,-3)-- (28.578838324886473,-3);
                    \draw [line width=1pt] (28.578838324886473,-3)-- (30.31088913245535,-4);
                    \draw [line width=1pt] (30.31088913245535,-4)-- (30.31088913245535,-6);
                    \draw [line width=1pt] (30.31088913245535,-6)-- (27.279800219209815,-7.25);
                    \draw [line width=1pt] (27.279800219209815,-7.25)-- (24.24871130596428,-6);
                    \draw [line width=1pt] (24,2.5)-- (26,3.75);
                    \draw [line width=1pt] (26,3.75)-- (28,2.5);
                    \draw [line width=1pt] (28,2.5)-- (28,0);
                    \draw [line width=1pt] (28,0)-- (24,0);
                    \draw [line width=1pt] (0,2.5)-- (-2,3.75);
                    \draw [line width=1pt] (-2,3.75)-- (-4,2.5);
                    \draw [line width=1pt] (-4,2.5)-- (-4,0);
                    \draw [line width=1pt] (-4,0)-- (0,0);
                \begin{scriptsize}
                    \draw [fill=xdxdff] (0,0) circle (1.5pt);
                    \draw [fill=xdxdff] (0,2.5) circle (1.5pt);
                    \draw [fill=ududff] (2.0022750483802536,3.753367938940545) circle (1.5pt);
                    \draw [fill=ududff] (4,2.5) circle (1.5pt);
                    \draw [fill=xdxdff] (4.003406850078927,0) circle (1.5pt);
                    \draw [fill=ududff] (6,3.75) circle (1.5pt);
                    \draw [fill=ududff] (8,2.5) circle (1.5pt);
                    \draw [fill=xdxdff] (8,0) circle (1.5pt);
                    \draw [fill=ududff] (10,3.75) circle (1.5pt);
                    \draw [fill=ududff] (12.018213373925816,2.604668471970693) circle (1.5pt);
                    \draw [fill=xdxdff] (12,0) circle (1.5pt);
                    \draw [fill=ududff] (14.118286398710405,3.6958530300519614) circle (1.5pt);
                    \draw [fill=ududff] (16,2.5) circle (1.5pt);
                    \draw [fill=ududff] (16,0) circle (1.5pt);
                    \draw [fill=ududff] (17.997760761151742,3.724601932924742) circle (1.5pt);
                    \draw [fill=ududff] (20,2.5) circle (1.5pt);
                    \draw [fill=xdxdff] (20,0) circle (1.5pt);
                    \draw [fill=ududff] (22,3.75) circle (1.5pt);
                    \draw [fill=ududff] (24,2.5) circle (1.5pt);
                    \draw [fill=ududff] (24,0) circle (1.5pt);
                    \draw [fill=xdxdff] (0,-6) circle (1.5pt);
                    \draw [fill=xdxdff] (0,-4) circle (1.5pt);
                    \draw [fill=ududff] (1.7320508075688772,-3) circle (1.5pt);
                    \draw [fill=ududff] (4.330127018922193,-3) circle (1.5pt);
                    \draw [fill=ududff] (6.06217782649107,-4) circle (1.5pt);
                    \draw [fill=ududff] (6.06217782649107,-6) circle (1.5pt);
                    \draw [fill=ududff] (3.031088913245535,-7.25) circle (1.5pt);
                    \draw [fill=ududff] (7.794228634059947,-3) circle (1.5pt);
                    \draw [fill=ududff] (10.392304845413264,-3) circle (1.5pt);
                    \draw [fill=ududff] (12.12435565298214,-4) circle (1.5pt);
                    \draw [fill=ududff] (12.12435565298214,-6) circle (1.5pt);
                    \draw [fill=ududff] (9.093266739736606,-7.25) circle (1.5pt);
                    \draw [fill=ududff] (13.856406460551018,-3) circle (1.5pt);
                    \draw [fill=ududff] (16.454482671904334,-3) circle (1.5pt);
                    \draw [fill=ududff] (18.186533479473212,-4) circle (1.5pt);
                    \draw [fill=ududff] (18.186533479473212,-6) circle (1.5pt);
                    \draw [fill=ududff] (15.155444566227676,-7.25) circle (1.5pt);
                    \draw [fill=ududff] (19.918584287042087,-3) circle (1.5pt);
                    \draw [fill=ududff] (22.516660498395403,-3) circle (1.5pt);
                    \draw [fill=ududff] (24.24871130596428,-4) circle (1.5pt);
                    \draw [fill=ududff] (24.24871130596428,-6) circle (1.5pt);
                    \draw [fill=ududff] (21.217622392718745,-7.25) circle (1.5pt);
                    \draw [fill=ududff] (-1.7320508075688772,-3) circle (1.5pt);
                    \draw [fill=ududff] (-4.330127018922193,-3) circle (1.5pt);
                    \draw [fill=ududff] (-6.06217782649107,-4) circle (1.5pt);
                    \draw [fill=ududff] (-6.06217782649107,-6) circle (1.5pt);
                    \draw [fill=ududff] (-3.031088913245535,-7.25) circle (1.5pt);
                    \draw [fill=ududff] (25.980762113533157,-3) circle (1.5pt);
                    \draw [fill=ududff] (28.578838324886473,-3) circle (1.5pt);
                    \draw [fill=ududff] (30.31088913245535,-4) circle (1.5pt);
                    \draw [fill=ududff] (30.31088913245535,-6) circle (1.5pt);
                    \draw [fill=ududff] (27.279800219209815,-7.25) circle (1.5pt);
                    \draw [fill=ududff] (26,3.75) circle (1.5pt);
                    \draw [fill=ududff] (28,2.5) circle (1.5pt);
                    \draw [fill=xdxdff] (28,0) circle (1.5pt);
                    \draw [fill=ududff] (-2,3.75) circle (1.5pt);
                    \draw [fill=ududff] (-4,2.5) circle (1.5pt);
                    \draw [fill=xdxdff] (-4,0) circle (1.5pt);
                \end{scriptsize}
            \end{tikzpicture}
            \vspace{-5.5em}
            \caption{Simple arrangement of odd-sided polygons in line tiling}
            \label{fig: Simple arrangement}
        \end{figure}
    
        \item Secondly, attach the odd-sided polygon so that in the first polygon (from left), the number of edges above the attached edges is precisely one more than those below them. In the second polygon, the number of edges below the attached edges is precisely one more than those above them. Similarly, in the third polygon, we use the first polygon's layout; in the fourth polygon, we use the second polygon's layout and continue similarly. This will give an effect as if the polygons are attached alternatively. We call this an \textit{alternate arrangement} of odd-sided polygons in the line tiling (see \Cref{fig: Alternate arrangement}).

        \begin{figure}[H]
            \vspace{-2em}
            \centering
                \hspace*{0.5em}\begin{tikzpicture}[line cap=round,line join=round,>=triangle 45,x=0.3cm,y=0.3cm]
                \clip(-7.488488939126034,-15.613996986304144) rectangle (32.35348150025198,7.118295707484252);
                    \draw [line width=1pt] (0,0)-- (0,2.5);
                    \draw [line width=1pt] (4,2.5)-- (4,0);
                    \draw [line width=1pt] (4,2.5)-- (6,3.75);
                    \draw [line width=1pt] (6,3.75)-- (8,2.5);
                    \draw [line width=1pt] (8,2.5)-- (8,0);
                    \draw [line width=1pt] (8,0)-- (4.003406850078927,0);
                    \draw [line width=1pt] (12,2.5)-- (12,0);
                    \draw [line width=1pt] (12,2.5)-- (14.118286398710405,3.6958530300519614);
                    \draw [line width=1pt] (14.118286398710405,3.6958530300519614)-- (16,2.5);
                    \draw [line width=1pt] (16,2.5)-- (16,0);
                    \draw [line width=1pt] (16,0)-- (12,0);
                    \draw [line width=1pt] (20,2.5)-- (20,0);
                    \draw [line width=1pt] (20,2.5)-- (22,3.75);
                    \draw [line width=1pt] (22,3.75)-- (24,2.5);
                    \draw [line width=1pt] (24,2.5)-- (24,0);
                    \draw [line width=1pt] (24,0)-- (20,0);
                    \draw [line width=1pt] (0,-6)-- (0,-4);
                    \draw [line width=1pt] (6.06217782649107,-4)-- (6.06217782649107,-6);
                    \draw [line width=1pt] (6.06217782649107,-4)-- (7.794228634059947,-3);
                    \draw [line width=1pt] (7.794228634059947,-3)-- (10.392304845413264,-3);
                    \draw [line width=1pt] (10.392304845413264,-3)-- (12.12435565298214,-4);
                    \draw [line width=1pt] (12.12435565298214,-4)-- (12.12435565298214,-6);
                    \draw [line width=1pt] (6.06217782649107,-6)-- (9.093266739736606,-7.25);
                    \draw [line width=1pt] (9.093266739736606,-7.25)-- (12.12435565298214,-6);
                    \draw [line width=1pt] (18.186533479473212,-4)-- (18.186533479473212,-6);
                    \draw [line width=1pt] (18.186533479473212,-4)-- (19.918584287042087,-3);
                    \draw [line width=1pt] (19.918584287042087,-3)-- (22.516660498395403,-3);
                    \draw [line width=1pt] (22.516660498395403,-3)-- (24.24871130596428,-4);
                    \draw [line width=1pt] (24.24871130596428,-4)-- (24.24871130596428,-6);
                    \draw [line width=1pt] (21.217622392718745,-7.25)-- (24.24871130596428,-6);
                    \draw [line width=1pt] (21.217622392718745,-7.25)-- (18.186533479473212,-6);
                    \draw [line width=1pt] (0,-4)-- (-1.7320508075688772,-3);
                    \draw [line width=1pt] (-1.7320508075688772,-3)-- (-4.330127018922193,-3);
                    \draw [line width=1pt] (-4.330127018922193,-3)-- (-6.06217782649107,-4);
                    \draw [line width=1pt] (-6.06217782649107,-4)-- (-6.06217782649107,-6);
                    \draw [line width=1pt] (0,-6)-- (-3.031088913245535,-7.25);
                    \draw [line width=1pt] (-3.031088913245535,-7.25)-- (-6.06217782649107,-6);
                    \draw [line width=1pt] (30.31088913245535,-4)-- (30.31088913245535,-6);
                    \draw [line width=1pt] (28,2.5)-- (28,0);
                    \draw [line width=1pt] (0,2.5)-- (-2,3.75);
                    \draw [line width=1pt] (-2,3.75)-- (-4,2.5);
                    \draw [line width=1pt] (-4,2.5)-- (-4,0);
                    \draw [line width=1pt] (-4,0)-- (0,0);
                    \draw [line width=1pt] (4.003406850078927,0)-- (2,-1.25);
                    \draw [line width=1pt] (2,-1.25)-- (0,0);
                    \draw [line width=1pt] (8,0)-- (10,-1.25);
                    \draw [line width=1pt] (10,-1.25)-- (12,0);
                    \draw [line width=1pt] (16,0)-- (18,-1.25);
                    \draw [line width=1pt] (18,-1.25)-- (20,0);
                    \draw [line width=1pt] (24,0)-- (26,-1.25);
                    \draw [line width=1pt] (26,-1.25)-- (28,0);
                    \draw [line width=1pt] (0,2.5)-- (4,2.5);
                    \draw [line width=1pt] (8,2.5)-- (12,2.5);
                    \draw [line width=1pt] (16,2.5)-- (20,2.5);
                    \draw [line width=1pt] (24,2.5)-- (28,2.5);
                    \draw [line width=1pt] (0,-4)-- (3.031088913245535,-2.75);
                    \draw [line width=1pt] (3.031088913245535,-2.75)-- (6.06217782649107,-4);
                    \draw [line width=1pt] (0,-6)-- (1.7320508075688772,-7);
                    \draw [line width=1pt] (1.7320508075688772,-7)-- (4.330127018922193,-7);
                    \draw [line width=1pt] (4.330127018922193,-7)-- (6.06217782649107,-6);
                    \draw [line width=1pt] (12.12435565298214,-4)-- (15.155444566227676,-2.75);
                    \draw [line width=1pt] (15.155444566227676,-2.75)-- (18.186533479473212,-4);
                    \draw [line width=1pt] (12.12435565298214,-6)-- (13.856406460551018,-7);
                    \draw [line width=1pt] (13.856406460551018,-7)-- (16.454482671904334,-7);
                    \draw [line width=1pt] (16.454482671904334,-7)-- (18.186533479473212,-6);
                    \draw [line width=1pt] (24.24871130596428,-4)-- (27.279800219209815,-2.75);
                    \draw [line width=1pt] (27.279800219209815,-2.75)-- (30.31088913245535,-4);
                    \draw [line width=1pt] (24.24871130596428,-6)-- (25.980762113533157,-7);
                    \draw [line width=1pt] (25.980762113533157,-7)-- (28.578838324886473,-7);
                    \draw [line width=1pt] (28.578838324886473,-7)-- (30.31088913245535,-6);
                \begin{scriptsize}
                    \draw [fill=xdxdff] (0,0) circle (1.5pt);
                    \draw [fill=xdxdff] (0,2.5) circle (1.5pt);
                    \draw [fill=ududff] (4,2.5) circle (1.5pt);
                    \draw [fill=xdxdff] (4,0) circle (1.5pt);
                    \draw [fill=ududff] (6,3.75) circle (1.5pt);
                    \draw [fill=ududff] (8,2.5) circle (1.5pt);
                    \draw [fill=xdxdff] (8,0) circle (1.5pt);
                    \draw [fill=ududff] (12,2.5) circle (1.5pt);
                    \draw [fill=xdxdff] (12,0) circle (1.5pt);
                    \draw [fill=ududff] (14,3.75) circle (1.5pt);
                    \draw [fill=ududff] (16,2.5) circle (1.5pt);
                    \draw [fill=ududff] (16,0) circle (1.5pt);
                    \draw [fill=ududff] (20,2.5) circle (1.5pt);
                    \draw [fill=xdxdff] (20,0) circle (1.5pt);
                    \draw [fill=ududff] (22,3.75) circle (1.5pt);
                    \draw [fill=ududff] (24,2.5) circle (1.5pt);
                    \draw [fill=ududff] (24,0) circle (1.5pt);
                    \draw [fill=xdxdff] (0,-6) circle (1.5pt);
                    \draw [fill=xdxdff] (0,-4) circle (1.5pt);
                    \draw [fill=ududff] (6.06217782649107,-4) circle (1.5pt);
                    \draw [fill=ududff] (6.06217782649107,-6) circle (1.5pt);
                    \draw [fill=ududff] (7.794228634059947,-3) circle (1.5pt);
                    \draw [fill=ududff] (10.392304845413264,-3) circle (1.5pt);
                    \draw [fill=ududff] (12.12435565298214,-4) circle (1.5pt);
                    \draw [fill=ududff] (12.12435565298214,-6) circle (1.5pt);
                    \draw [fill=ududff] (9.093266739736606,-7.25) circle (1.5pt);
                    \draw [fill=ududff] (18.186533479473212,-4) circle (1.5pt);
                    \draw [fill=ududff] (18.186533479473212,-6) circle (1.5pt);
                    \draw [fill=ududff] (19.918584287042087,-3) circle (1.5pt);
                    \draw [fill=ududff] (22.516660498395403,-3) circle (1.5pt);
                    \draw [fill=ududff] (24.24871130596428,-4) circle (1.5pt);
                    \draw [fill=ududff] (24.24871130596428,-6) circle (1.5pt);
                    \draw [fill=ududff] (21.217622392718745,-7.25) circle (1.5pt);
                    \draw [fill=ududff] (-1.7320508075688772,-3) circle (1.5pt);
                    \draw [fill=ududff] (-4.330127018922193,-3) circle (1.5pt);
                    \draw [fill=ududff] (-6.06217782649107,-4) circle (1.5pt);
                    \draw [fill=ududff] (-6.06217782649107,-6) circle (1.5pt);
                    \draw [fill=ududff] (-3.031088913245535,-7.25) circle (1.5pt);
                    \draw [fill=ududff] (30.31088913245535,-4) circle (1.5pt);
                    \draw [fill=ududff] (30.31088913245535,-6) circle (1.5pt);
                    \draw [fill=ududff] (28,2.5) circle (1.5pt);
                    \draw [fill=xdxdff] (28,0) circle (1.5pt);
                    \draw [fill=ududff] (-2,3.75) circle (1.5pt);
                    \draw [fill=ududff] (-4,2.5) circle (1.5pt);
                    \draw [fill=xdxdff] (-4,0) circle (1.5pt);
                    \draw [fill=ududff] (10,-1.25) circle (1.5pt);
                    \draw [fill=ududff] (18,-1.25) circle (1.5pt);
                    \draw [fill=ududff] (26,-1.25) circle (1.5pt);
                    \draw [fill=xdxdff] (2,-1.25) circle (1.5pt);
                    \draw [fill=ududff] (3.031088913245535,-2.75) circle (1.5pt);
                    \draw [fill=ududff] (1.7320508075688772,-7) circle (1.5pt);
                    \draw [fill=ududff] (4.330127018922193,-7) circle (1.5pt);
                    \draw [fill=ududff] (15.155444566227676,-2.75) circle (1.5pt);
                    \draw [fill=ududff] (13.856406460551018,-7) circle (1.5pt);
                    \draw [fill=ududff] (16.454482671904334,-7) circle (1.5pt);
                    \draw [fill=ududff] (27.279800219209815,-2.75) circle (1.5pt);
                    \draw [fill=ududff] (25.980762113533157,-7) circle (1.5pt);
                    \draw [fill=ududff] (28.578838324886473,-7) circle (1.5pt);
                \end{scriptsize}
            \end{tikzpicture}
            \vspace{-6.5em}
            \caption{Alternate arrangement of odd-sided polygons in line tiling.}
            \label{fig: Alternate arrangement}
        \end{figure}
    \end{enumerate}

    We will discuss the reason for taking two different cases to tackle this particular problem later. Note that the only way in which triangles can be arranged in a line tiling is by alternatively arranging them (see \Cref{fig: Alternate arrangement of triangles}). This line tiling will be dealt with separately when discussing alternatively arranged polygons.

    \begin{figure}[h!]
    \vspace{-5em}
        \centering
            \begin{tikzpicture}[line cap=round,line join=round,>=triangle 45,x=0.5cm,y=0.5cm]
            \clip(-0.9089262832126813,-2.0069579064894896) rectangle (19.347144372775173,9.284611506182573);
                    \draw [line width=1pt] (0,2)-- (3.4641016151377544,2);
                    \draw [line width=1pt] (0,2)-- (1.7320508075688772,5);
                    \draw [line width=1pt] (1.7320508075688772,5)-- (3.4641016151377544,2);
                    \draw [line width=1pt] (1.7320508075688772,5)-- (5.196152422706632,5);
                    \draw [line width=1pt] (5.196152422706632,5)-- (3.4641016151377544,2);
                    \draw [line width=1pt] (3.4641016151377544,2)-- (6.928203230275509,2);
                    \draw [line width=1pt] (6.928203230275509,2)-- (5.196152422706632,5);
                    \draw [line width=1pt] (5.196152422706632,5)-- (8.660254037844386,5);
                    \draw [line width=1pt] (8.660254037844386,5)-- (6.928203230275509,2);
                    \draw [line width=1pt] (6.928203230275509,2)-- (10.392304845413264,2);
                    \draw [line width=1pt] (10.392304845413264,2)-- (8.660254037844386,5);
                    \draw [line width=1pt] (8.660254037844386,5)-- (12.12435565298214,5);
                    \draw [line width=1pt] (12.12435565298214,5)-- (10.392304845413264,2);
                    \draw [line width=1pt] (10.392304845413264,2)-- (13.856406460551018,2);
                    \draw [line width=1pt] (13.856406460551018,2)-- (12.12435565298214,5);
                    \draw [line width=1pt] (12.12435565298214,5)-- (15.588457268119894,5);
                    \draw [line width=1pt] (15.588457268119894,5)-- (13.856406460551018,2);
                    \draw [line width=1pt] (13.856406460551018,2)-- (17.32050807568877,2);
                    \draw [line width=1pt] (17.32050807568877,2)-- (15.588457268119894,5);
                    \draw [line width=1pt] (15.588457268119894,5)-- (19.05255888325765,5);
                    \draw [line width=1pt] (19.05255888325765,5)-- (17.32050807568877,2);
                \begin{scriptsize}
                    \draw [fill=ududff] (0,2) circle (1.5pt);
                    \draw [fill=ududff] (3.4641016151377544,2) circle (1.5pt);
                    \draw [fill=ududff] (1.7320508075688772,5) circle (1.5pt);
                    \draw [fill=ududff] (5.196152422706632,5) circle (1.5pt);
                    \draw [fill=ududff] (6.928203230275509,2) circle (1.5pt);
                    \draw [fill=ududff] (8.660254037844386,5) circle (1.5pt);
                    \draw [fill=ududff] (10.392304845413264,2) circle (1.5pt);
                    \draw [fill=ududff] (12.12435565298214,5) circle (1.5pt);
                    \draw [fill=ududff] (13.856406460551018,2) circle (1.5pt);
                    \draw [fill=ududff] (15.588457268119894,5) circle (1.5pt);
                    \draw [fill=ududff] (17.32050807568877,2) circle (1.5pt);
                    \draw [fill=ududff] (19.05255888325765,5) circle (1.5pt);
                \end{scriptsize}
            \end{tikzpicture}
        \vspace{-5em}
        \caption{Only alternate arrangement of triangles is possible in line tiling}
        \vspace{-1em}
        \label{fig: Alternate arrangement of triangles}
    \end{figure}

    An interesting point to note about these arrangements is that we need to take an even number of polygons for our discussion. If we take an odd number of odd-sided polygons in our line tiling, we end up having an odd number of vertices in the graph, and then the complex $\mathcal{M}_p\left(G\right)$ would be empty.
    
    \subsection*{Labelling of edges, vertices and polygons in the line tiling of odd-sided polygons}

    Let $\mathcal{O}^{\mathcal{S}}_{n,k}$ and $\mathcal{O}^{\mathcal{A}}_{n,k}$ denote the \textit{simple} and \textit{alternate} arrangement of the general line tiling of $2k-\left(2n+1\right)-$gon, respectively, where, $k\geq 2$ and $n\geq 1$. In both cases, $2k$  number of $\left(2n+1\right)-$gon (odd-sided polygons) are in the line tiling.

    We define the labelling for the consecutive polygons in the line tiling as $O_{2j-1}$ and $O_{2j}$, respectively, where $1 \leq j \leq k$ and use the same labelling defined for the edges and vertices as described for the even-sided case with slight modifications; see \Cref{fig: Simple Arrangement Labelling} and \Cref{fig: Alternate Arrangement Labelling}.

    \begin{figure}[h]
        \centering
            \begin{tikzpicture}[line cap=round,line join=round,>=triangle 45,x=0.5cm,y=0.35cm]
            \clip(-2,-6) rectangle (29,10);
                
                \draw [line width=1pt] (0,3)-- (0,0);
                \draw [line width=1pt] (0,0)-- (1.3016512173526733,-2.702906603707256);
                \draw [line width=1pt] (1.3016512173526733,-2.702906603707256)-- (3.6471456647567626,-4.573376009283456);
                \draw [line width=1pt] (3.6471456647567626,-4.573376009283456)-- (6.571929401302232,-5.240938811152399);
                \draw [line width=1pt,dash pattern=on 4pt off 4pt] (6.571929401302232,-5.240938811152399)-- (9.496713137847703,-4.573376009283457);
                \draw [line width=1pt] (9.496713137847703,-4.573376009283457)-- (11.842207585251792,-2.702906603707257);
                \draw [line width=1pt] (11.842207585251792,-2.702906603707257)-- (13.143858802604466,0);
                \draw [line width=1pt] (13.143858802604466,0)-- (13.143858802604466,3);
                \draw [line width=1pt] (13.143858802604466,3)-- (11.842207585251794,5.702906603707255);
                \draw [line width=1pt] (11.842207585251794,5.702906603707255)-- (9.496713137847705,7.573376009283456);
                \draw [line width=1pt,dash pattern=on 4pt off 4pt] (9.496713137847705,7.573376009283456)-- (6.571929401302235,8.2409388111524);
                \draw [line width=1pt] (6.571929401302235,8.2409388111524)-- (3.6471456647567644,7.573376009283457);
                \draw [line width=1pt] (3.6471456647567644,7.573376009283457)-- (1.301651217352676,5.702906603707258);
                \draw [line width=1pt] (1.301651217352676,5.702906603707258)-- (0,3);
                \draw [line width=1pt] (13.143858802604466,3)-- (13.143858802604466,0);
                \draw [line width=1pt] (13.143858802604466,0)-- (14.445510019957139,-2.7029066037072567);
                \draw [line width=1pt] (14.445510019957139,-2.7029066037072567)-- (16.791004467361226,-4.5733760092834554);
                \draw [line width=1pt] (16.791004467361226,-4.5733760092834554)-- (19.715788203906698,-5.2409388111523985);
                \draw [line width=1pt,dash pattern=on 4pt off 4pt] (19.715788203906698,-5.2409388111523985)-- (22.640571940452165,-4.573376009283457);
                \draw [line width=1pt] (22.640571940452165,-4.573376009283457)-- (24.986066387856255,-2.7029066037072575);
                \draw [line width=1pt] (24.986066387856255,-2.7029066037072575)-- (26.28771760520893,0);
                \draw [line width=1pt] (26.28771760520893,0)-- (26.28771760520893,3);
                \draw [line width=1pt] (26.28771760520893,3)-- (24.986066387856255,5.7029066037072536);
                \draw [line width=1pt] (24.986066387856255,5.7029066037072536)-- (22.64057194045217,7.573376009283454);
                \draw [line width=1pt,dash pattern=on 4pt off 4pt] (22.64057194045217,7.573376009283454)-- (19.715788203906698,8.240938811152397);
                \draw [line width=1pt] (19.715788203906698,8.240938811152397)-- (16.79100446736123,7.573376009283454);
                \draw [line width=1pt] (16.79100446736123,7.573376009283454)-- (14.44551001995714,5.702906603707255);
                \draw [line width=1pt] (14.44551001995714,5.702906603707255)-- (13.143858802604466,3);
                \draw (6.5,1.5) node[anchor=center] {\Huge$O_{2j-1}$};
                \draw (-0.6,1.5) node[anchor=center] {\large$a_{j}$};
                \draw (27.3,1.5) node[anchor=center] {\large$a_{j+2}$};
                \draw (19.5,1.5) node[anchor=center] {\Huge$O_{2j}$};
                \draw (12.3,1.5) node[anchor=center] {\large$a_{j+1}$};
                \draw (11.8,4.3) node[anchor=center] {\large$b_{j,n}$};
                \draw (11.5,7.2) node[anchor=center] {\large$b_{j,n-1}$};
                \draw (4.9,8.7) node[anchor=center] {\large$b_{j,3}$};
                \draw (2,7.4) node[anchor=center] {\large$b_{j,2}$};
                \draw (0,4.9) node[anchor=center] {\large$b_{j,1}$};
                \draw (14.9,4.2) node[anchor=center] {\large$b_{j+1,1}$};
                \draw (14.6,7.2) node[anchor=center] {\large$b_{j+1,2}$};
                \draw (18,8.7) node[anchor=center] {\large$b_{j+1,3}$};
                \draw (26.8,4.6) node[anchor=center] {\large$b_{j+1,n}$};
                \draw (25.3,7.2) node[anchor=center] {\large$b_{j+1,n-1}$};
                \draw (-0,-1.6) node[anchor=center] {\large$c_{j,1}$};
                \draw (1.9,-4) node[anchor=center] {\large$c_{j,2}$};
                \draw (5,-5.5) node[anchor=center] {\large$c_{j,3}$};
                \draw (11.6,-4.1) node[anchor=center] {\large$c_{j,n-2}$};
                \draw (11.3,-1) node[anchor=center] {\large$c_{j,n-1}$};
                \draw (14.8,-1.2) node[anchor=center] {\large$c_{j+1,1}$};
                \draw (14.6,-4) node[anchor=center] {\large$c_{j+1,2}$};
                \draw (18.1,-5.4) node[anchor=center] {\large$c_{j+1,3}$};
                \draw (25.4,-4) node[anchor=center] {\large$c_{j+1,n-2}$};
                \draw (27.2,-1.6) node[anchor=center] {\large$c_{j+1,n-1}$};
                \draw [color=xdxdff](-1,3) node[anchor=center] {\large$u_{j,1}$};
                \draw [color=xdxdff](3,8.4) node[anchor=center] {\large$u_{j,3}$};
                \draw [color=xdxdff](0.4,6.2) node[anchor=center] {\large$u_{j,2}$};
                \draw [color=xdxdff](10.3,8.4) node[anchor=center] {\large$u_{j,n-1}$};
                \draw [color=xdxdff](10.9,5.5) node[anchor=center] {\large$u_{j,n}$};
                \draw [color=xdxdff](14.5,2.9) node[anchor=center] {\large$u_{j+1,1}$};
                \draw [color=xdxdff](15.6,8.2) node[anchor=center] {\large$u_{j+1,3}$};
                \draw [color=xdxdff](15.7,5.3) node[anchor=center] {\large$u_{j+1,2}$};
                \draw [color=xdxdff](24,8.3) node[anchor=center] {\large$u_{j+1,n-1}$};
                \draw [color=xdxdff](26.2,6) node[anchor=center] {\large$u_{j+1,n}$};
                \draw [color=xdxdff](27.6,2.8) node[anchor=center] {\large$u_{j+2,1}$};
                \draw [color=xdxdff](-0.9,0) node[anchor=center] {\large$l_{j,1}$};
                \draw [color=xdxdff](0.5,-2.7) node[anchor=center] {\large$l_{j,2}$};
                \draw [color=xdxdff](3,-5.1) node[anchor=center] {\large$l_{j,3}$};
                \draw [color=xdxdff](10.5,-5.3) node[anchor=center] {\large$l_{j,n-2}$};
                \draw [color=xdxdff](10.6,-2.3) node[anchor=center] {\large$l_{j,n-1}$};
                \draw [color=xdxdff](14.5,0) node[anchor=center] {\large$l_{j+1,1}$};
                \draw [color=xdxdff](15.8,-2.5) node[anchor=center] {\large$l_{j+1,2}$};
                \draw [color=xdxdff](15.8,-5) node[anchor=center] {\large$l_{j+1,3}$};
                \draw [color=xdxdff](24.2,-5.2) node[anchor=center] {\large$l_{j+1,n-2}$};
                \draw [color=xdxdff](26.7,-2.9) node[anchor=center] {\large$l_{j+1,n-1}$};
                \draw [color=xdxdff](27.5,-0.1) node[anchor=center] {\large$l_{j+2,1}$};
            \begin{scriptsize}
                \draw [fill=xdxdff] (0,3) circle (2.5pt);
                \draw [fill=xdxdff] (0,0) circle (2.5pt);
                \draw [fill=xdxdff] (1.3016512173526733,-2.702906603707256) circle (2.5pt);
                \draw [fill=xdxdff] (3.6471456647567626,-4.573376009283456) circle (2.5pt);
                \draw [fill=xdxdff] (6.571929401302232,-5.240938811152399) circle (2.5pt);
                \draw [fill=xdxdff] (9.496713137847703,-4.573376009283457) circle (2.5pt);
                \draw [fill=xdxdff] (11.842207585251792,-2.702906603707257) circle (2.5pt);
                \draw [fill=xdxdff] (13.143858802604466,0) circle (2.5pt);
                \draw [fill=xdxdff] (13.143858802604466,3) circle (2.5pt);
                \draw [fill=xdxdff] (11.842207585251794,5.702906603707255) circle (2.5pt);
                \draw [fill=xdxdff] (9.496713137847705,7.573376009283456) circle (2.5pt);
                \draw [fill=xdxdff] (6.571929401302235,8.2409388111524) circle (2.5pt);
                \draw [fill=xdxdff] (3.6471456647567644,7.573376009283457) circle (2.5pt);
                \draw [fill=xdxdff] (1.301651217352676,5.702906603707258) circle (2.5pt);
                \draw [fill=xdxdff] (14.445510019957139,-2.7029066037072567) circle (2.5pt);
                \draw [fill=xdxdff] (16.791004467361226,-4.5733760092834554) circle (2.5pt);
                \draw [fill=xdxdff] (19.715788203906698,-5.2409388111523985) circle (2.5pt);
                \draw [fill=xdxdff] (22.640571940452165,-4.573376009283457) circle (2.5pt);
                \draw [fill=xdxdff] (24.986066387856255,-2.7029066037072575) circle (2.5pt);
                \draw [fill=xdxdff] (26.28771760520893,0) circle (2.5pt);
                \draw [fill=xdxdff] (26.28771760520893,3) circle (2.5pt);
                \draw [fill=xdxdff] (24.986066387856255,5.7029066037072536) circle (2.5pt);
                \draw [fill=xdxdff] (22.64057194045217,7.573376009283454) circle (2.5pt);
                \draw [fill=xdxdff] (19.715788203906698,8.240938811152397) circle (2.5pt);
                \draw [fill=xdxdff] (16.79100446736123,7.573376009283454) circle (2.5pt);
                \draw [fill=xdxdff] (14.44551001995714,5.702906603707255) circle (2.5pt);
            \end{scriptsize}
            \end{tikzpicture}
        \caption{Labelling of edges and polygons in $\mathcal{O}^{\mathcal{S}}_{n,k}$}
        \label{fig: Simple Arrangement Labelling}
    \end{figure}

    \begin{figure}[h!]
        \centering
            \begin{tikzpicture}[line cap=round,line join=round,>=triangle 45,x=0.5cm,y=0.35cm]
            \clip(-2,-6) rectangle (29,10);
                
                \draw [line width=1pt] (0,3)-- (0,0);
                \draw [line width=1pt] (0,0)-- (1.3016512173526733,-2.702906603707256);
                \draw [line width=1pt] (1.3016512173526733,-2.702906603707256)-- (3.6471456647567626,-4.573376009283456);
                \draw [line width=1pt] (3.6471456647567626,-4.573376009283456)-- (6.571929401302232,-5.240938811152399);
                \draw [line width=1pt,dash pattern=on 4pt off 4pt] (6.571929401302232,-5.240938811152399)-- (9.496713137847703,-4.573376009283457);
                \draw [line width=1pt] (9.496713137847703,-4.573376009283457)-- (11.842207585251792,-2.702906603707257);
                \draw [line width=1pt] (11.842207585251792,-2.702906603707257)-- (13.143858802604466,0);
                \draw [line width=1pt] (13.143858802604466,0)-- (13.143858802604466,3);
                \draw [line width=1pt] (13.143858802604466,3)-- (11.842207585251794,5.702906603707255);
                \draw [line width=1pt] (11.842207585251794,5.702906603707255)-- (9.496713137847705,7.573376009283456);
                \draw [line width=1pt,dash pattern=on 4pt off 4pt] (9.496713137847705,7.573376009283456)-- (6.571929401302235,8.2409388111524);
                \draw [line width=1pt] (6.571929401302235,8.2409388111524)-- (3.6471456647567644,7.573376009283457);
                \draw [line width=1pt] (3.6471456647567644,7.573376009283457)-- (1.301651217352676,5.702906603707258);
                \draw [line width=1pt] (1.301651217352676,5.702906603707258)-- (0,3);
                \draw [line width=1pt] (13.143858802604466,3)-- (13.143858802604466,0);
                \draw [line width=1pt] (13.143858802604466,0)-- (14.445510019957139,-2.7029066037072567);
                \draw [line width=1pt] (14.445510019957139,-2.7029066037072567)-- (16.791004467361226,-4.5733760092834554);
                \draw [line width=1pt] (16.791004467361226,-4.5733760092834554)-- (19.715788203906698,-5.2409388111523985);
                \draw [line width=1pt,dash pattern=on 4pt off 4pt] (19.715788203906698,-5.2409388111523985)-- (22.640571940452165,-4.573376009283457);
                \draw [line width=1pt] (22.640571940452165,-4.573376009283457)-- (24.986066387856255,-2.7029066037072575);
                \draw [line width=1pt] (24.986066387856255,-2.7029066037072575)-- (26.28771760520893,0);
                \draw [line width=1pt] (26.28771760520893,0)-- (26.28771760520893,3);
                \draw [line width=1pt] (26.28771760520893,3)-- (24.986066387856255,5.7029066037072536);
                \draw [line width=1pt] (24.986066387856255,5.7029066037072536)-- (22.64057194045217,7.573376009283454);
                \draw [line width=1pt,dash pattern=on 4pt off 4pt] (22.64057194045217,7.573376009283454)-- (19.715788203906698,8.240938811152397);
                \draw [line width=1pt] (19.715788203906698,8.240938811152397)-- (16.79100446736123,7.573376009283454);
                \draw [line width=1pt] (16.79100446736123,7.573376009283454)-- (14.44551001995714,5.702906603707255);
                \draw [line width=1pt] (14.44551001995714,5.702906603707255)-- (13.143858802604466,3);
                \draw (6.5,1.5) node[anchor=center] {\Huge$O_{2j-1}$};
                \draw (-0.6,1.5) node[anchor=center] {\large$a_{j}$};
                \draw (27.3,1.5) node[anchor=center] {\large$a_{j+2}$};
                \draw (19.5,1.5) node[anchor=center] {\Huge$O_{2j}$};
                \draw (12.3,1.5) node[anchor=center] {\large$a_{j+1}$};
                \draw (11.8,4.3) node[anchor=center] {\large$b_{j,n}$};
                \draw (11.5,7.2) node[anchor=center] {\large$b_{j,n-1}$};
                \draw (4.9,8.7) node[anchor=center] {\large$b_{j,3}$};
                \draw (2,7.4) node[anchor=center] {\large$b_{j,2}$};
                \draw (0,4.9) node[anchor=center] {\large$b_{j,1}$};
                \draw (14.9,4.2) node[anchor=center] {\large$b_{j+1,1}$};
                \draw (14.6,7.2) node[anchor=center] {\large$b_{j+1,2}$};
                \draw (18,8.7) node[anchor=center] {\large$b_{j+1,3}$};
                \draw (27.1,4.6) node[anchor=center] {\large$b_{j+1,n-1}$};
                \draw (25.3,7.2) node[anchor=center] {\large$b_{j+1,n-2}$};
                \draw (-0,-1.6) node[anchor=center] {\large$c_{j,1}$};
                \draw (1.9,-4) node[anchor=center] {\large$c_{j,2}$};
                \draw (5,-5.5) node[anchor=center] {\large$c_{j,3}$};
                \draw (11.6,-4.1) node[anchor=center] {\large$c_{j,n-2}$};
                \draw (11.3,-1) node[anchor=center] {\large$c_{j,n-1}$};
                \draw (14.8,-1.2) node[anchor=center] {\large$c_{j+1,1}$};
                \draw (14.6,-4) node[anchor=center] {\large$c_{j+1,2}$};
                \draw (18.1,-5.4) node[anchor=center] {\large$c_{j+1,3}$};
                \draw (25.4,-4) node[anchor=center] {\large$c_{j+1,n-1}$};
                \draw (27,-1.6) node[anchor=center] {\large$c_{j+1,n}$};
                \draw [color=xdxdff](-1,3) node[anchor=center] {\large$u_{j,1}$};
                \draw [color=xdxdff](3,8.4) node[anchor=center] {\large$u_{j,3}$};
                \draw [color=xdxdff](0.4,6.2) node[anchor=center] {\large$u_{j,2}$};
                \draw [color=xdxdff](10.3,8.4) node[anchor=center] {\large$u_{j,n-1}$};
                \draw [color=xdxdff](10.9,5.5) node[anchor=center] {\large$u_{j,n}$};
                \draw [color=xdxdff](14.5,2.9) node[anchor=center] {\large$u_{j+1,1}$};
                \draw [color=xdxdff](15.6,8.2) node[anchor=center] {\large$u_{j+1,3}$};
                \draw [color=xdxdff](15.7,5.3) node[anchor=center] {\large$u_{j+1,2}$};
                \draw [color=xdxdff](24,8.3) node[anchor=center] {\large$u_{j+1,n-2}$};
                \draw [color=xdxdff](26.5,6) node[anchor=center] {\large$u_{j+1,n-1}$};
                \draw [color=xdxdff](27.6,2.8) node[anchor=center] {\large$u_{j+2,1}$};
                \draw [color=xdxdff](-0.9,0) node[anchor=center] {\large$l_{j,1}$};
                \draw [color=xdxdff](0.5,-2.7) node[anchor=center] {\large$l_{j,2}$};
                \draw [color=xdxdff](3,-5.1) node[anchor=center] {\large$l_{j,3}$};
                \draw [color=xdxdff](10.5,-5.3) node[anchor=center] {\large$l_{j,n-2}$};
                \draw [color=xdxdff](10.6,-2.3) node[anchor=center] {\large$l_{j,n-1}$};
                \draw [color=xdxdff](14.5,0) node[anchor=center] {\large$l_{j+1,1}$};
                \draw [color=xdxdff](15.8,-2.5) node[anchor=center] {\large$l_{j+1,2}$};
                \draw [color=xdxdff](15.8,-5) node[anchor=center] {\large$l_{j+1,3}$};
                \draw [color=xdxdff](24.2,-5.2) node[anchor=center] {\large$l_{j+1,n-1}$};
                \draw [color=xdxdff](26.5,-2.9) node[anchor=center] {\large$l_{j+1,n}$};
                \draw [color=xdxdff](27.5,-0.1) node[anchor=center] {\large$l_{j+2,1}$};
            \begin{scriptsize}
                \draw [fill=xdxdff] (0,3) circle (2.5pt);
                \draw [fill=xdxdff] (0,0) circle (2.5pt);
                \draw [fill=xdxdff] (1.3016512173526733,-2.702906603707256) circle (2.5pt);
                \draw [fill=xdxdff] (3.6471456647567626,-4.573376009283456) circle (2.5pt);
                \draw [fill=xdxdff] (6.571929401302232,-5.240938811152399) circle (2.5pt);
                \draw [fill=xdxdff] (9.496713137847703,-4.573376009283457) circle (2.5pt);
                \draw [fill=xdxdff] (11.842207585251792,-2.702906603707257) circle (2.5pt);
                \draw [fill=xdxdff] (13.143858802604466,0) circle (2.5pt);
                \draw [fill=xdxdff] (13.143858802604466,3) circle (2.5pt);
                \draw [fill=xdxdff] (11.842207585251794,5.702906603707255) circle (2.5pt);
                \draw [fill=xdxdff] (9.496713137847705,7.573376009283456) circle (2.5pt);
                \draw [fill=xdxdff] (6.571929401302235,8.2409388111524) circle (2.5pt);
                \draw [fill=xdxdff] (3.6471456647567644,7.573376009283457) circle (2.5pt);
                \draw [fill=xdxdff] (1.301651217352676,5.702906603707258) circle (2.5pt);
                \draw [fill=xdxdff] (14.445510019957139,-2.7029066037072567) circle (2.5pt);
                \draw [fill=xdxdff] (16.791004467361226,-4.5733760092834554) circle (2.5pt);
                \draw [fill=xdxdff] (19.715788203906698,-5.2409388111523985) circle (2.5pt);
                \draw [fill=xdxdff] (22.640571940452165,-4.573376009283457) circle (2.5pt);
                \draw [fill=xdxdff] (24.986066387856255,-2.7029066037072575) circle (2.5pt);
                \draw [fill=xdxdff] (26.28771760520893,0) circle (2.5pt);
                \draw [fill=xdxdff] (26.28771760520893,3) circle (2.5pt);
                \draw [fill=xdxdff] (24.986066387856255,5.7029066037072536) circle (2.5pt);
                \draw [fill=xdxdff] (22.64057194045217,7.573376009283454) circle (2.5pt);
                \draw [fill=xdxdff] (19.715788203906698,8.240938811152397) circle (2.5pt);
                \draw [fill=xdxdff] (16.79100446736123,7.573376009283454) circle (2.5pt);
                \draw [fill=xdxdff] (14.44551001995714,5.702906603707255) circle (2.5pt);
            \end{scriptsize}
            \end{tikzpicture}
        \caption{Labelling of edges and polygons in $\mathcal{O}^{\mathcal{A}}_{n,k}$}
        \label{fig: Alternate Arrangement Labelling}
    \end{figure}
    
    \subsection{Homotopy type of the perfect matching complex of \texorpdfstring{$\mathcal{O}^{\mathcal{S}}_{n,k}$ and $\mathcal{O}^{\mathcal{A}}_{n,k}$}{}}

    We can now discuss a lemma that will be useful in proving the required result for the alternate arrangement of polygons. Let $\pma$ denote the perfect matching complex of the general line tiling of $2k-\left(2n+1\right)-$gon when arranged alternatively.

    \begin{lemma}\label{lemma 5.1}
        For any facet (perfect matching) $\tau \in \pma$, the edges $a_{{2i}} \notin \tau$, for all $1 \leq i \leq k$.
    \end{lemma}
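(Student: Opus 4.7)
The plan is to derive a contradiction via a vertex-parity count. Suppose for contradiction that $a_{2i} \in \tau$ for some $1 \le i \le k$, and let $G_L$ denote the subgraph of $\mathcal{O}^{\mathcal{A}}_{n,k}$ induced by the vertex set $V_L := V(O_1) \cup V(O_2) \cup \cdots \cup V(O_{2i-1})$, the ``left half'' of the tiling. Since $a_{2i}$ is the right attachment edge of $O_{2i-1}$, both of its endpoints lie in $V_L$, and the key will be to show that after removing these two endpoints, the set of remaining vertices of $V_L$ has odd cardinality, making it impossible to cover them using the remaining disjoint edges of $\tau$.

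First I compute $|V_L|$ by inclusion--exclusion: each $O_m$ contributes $2n+1$ vertices, each pair of adjacent polygons $O_m, O_{m+1}$ double-counts exactly the $2$ endpoints of their shared attachment edge, and no three polygons share a vertex in a linear tiling (for $n \geq 2$, the two attachment edges of any interior polygon are disjoint). This yields
$$|V_L| = (2i-1)(2n+1) - 2(2i-2),$$
which is the product of two odd numbers minus an even number, hence odd. Removing the two endpoints of $a_{2i}$ from $V_L$ therefore leaves $|V_L| - 2$ vertices, still an odd number, that must be covered by edges of $\tau \setminus \{a_{2i}\}$.

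Finally, I will check that every edge $e \in \tau \setminus \{a_{2i}\}$ incident to $V_L$ actually has both endpoints inside $V_L$ (and hence inside $V_L$ minus the two $a_{2i}$-endpoints). Each edge of the tiling belongs to some polygon $O_m$: if $m \leq 2i-1$ then both endpoints of $e$ lie in $V_L$, and neither can equal an endpoint of $a_{2i}$ by the matching property; if $m \geq 2i$, then the endpoints of $e$ lie in $V(O_{2i}) \cup \cdots \cup V(O_{2k})$, whose intersection with $V_L$ consists of exactly the two endpoints of $a_{2i}$, again forbidden. Hence the remaining $|V_L|-2$ vertices of $V_L$ would have to be partitioned into matched pairs by disjoint edges, each covering two vertices, which is impossible as $|V_L|-2$ is odd. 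Therefore $a_{2i} \notin \tau$.

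The argument is elementary and does not invoke discrete Morse theory; the only mild subtlety is confirming that no edge of the tiling straddles the cut in an unexpected way, which follows immediately from the linear attachment structure. The triangle case ($n=1$) is treated separately in the paper and is not addressed by this lemma.
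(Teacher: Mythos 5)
Your proof is correct and is essentially the paper's own argument: both proceed by noting that the vertices lying strictly to the left of $a_{2i}$ are odd in number (an odd number of odd-sided polygons, glued along edges) and hence cannot be covered by pairwise-disjoint edges once $a_{2i}$ is used, with your version simply making the inclusion--exclusion count and the ``no edge of $\tau$ crosses the cut'' step explicit. One small remark: your closing disclaimer is slightly off, since the paper does invoke this lemma for triangles in \Cref{Theorem 7}; there consecutive attachment edges do share a vertex, but the left-hand vertex count is $2i+1$, still odd, so the same parity contradiction goes through.
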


    \begin{remark}
        The above result also holds for the simple arrangement of odd-sided polygons in line tiling, and the proof is similar to this proof.
    \end{remark}

    \begin{proof}[Proof of \Cref{lemma 5.1}]
        Let us assume to the contrary that this is not true. Therefore, $a_{{2i}} \in \tau$, for some perfect matching $\tau$ of $\mathcal{O}^{\mathcal{A}}_{n,k}$ and for some index $i \in \left\{1, 2,\dots, k\right\}$.

        Note that $a_{{2i}}$ is the edge where the polygons $O_{2i-1}$ and $O_{2i}$ are attached, and there are an odd number of polygons to the left of the polygon $O_{2i}$, namely, $O_1, O_2, \dots, O_{2i-1}$. Now, if $a_{{2i}} \in \tau$, then we have an odd number of vertices on the left side of $a_{{2i}}$, which contradicts $\tau$ being a perfect matching since there will be a vertex left uncovered by $\tau$.
    \end{proof}

    The statement of the above result says that for every perfect matching $\tau$ of $\mathcal{O}^{\mathcal{A}}_{n,k}$, $\tau$ omit the edges $a_{{2i}}$, for all $1\leq i \leq k$. This is an important observation because now we can think of excluding the edges $a_{{2i}}$ from the graph as it does not contribute to any perfect matching of $\mathcal{O}^{\mathcal{A}}_{n,k}$ (see \Cref{fig: Omitting edges}). However, the structure thus obtained is similar to that of an even-sided polygon line tiling whose corresponding result we have already proved, \textit{i.e.}, \Cref{thm: homotopy of 2n-gon}.


    \begin{figure}[h]
        \vspace{-1em}
        \centering
            \hspace*{-1em}\begin{tikzpicture}[line cap=round,line join=round,>=triangle 45,x=0.3cm,y=0.25cm]
            \clip(-8.953571863424697,-14.182849343130211) rectangle (50.397192337223004,18.90171503503709);
                    \draw [line width=1pt] (13.856406460551018,15)-- (13.856406460551018,13);
                    \draw [line width=1pt] (13.856406460551018,15)-- (15.588457268119894,16);
                    \draw [line width=1pt] (15.588457268119894,16)-- (18.18653347947321,16);
                    \draw [line width=1pt] (18.18653347947321,16)-- (19.918584287042084,15);
                    \draw [line width=2pt,color=ffqqqq] (19.918584287042084,15)-- (19.918584287042084,13);
                    \draw [line width=1pt] (13.856406460551018,13)-- (16.88749537379655,11.75);
                    \draw [line width=1pt] (16.88749537379655,11.75)-- (19.918584287042084,13);
                    \draw [line width=1pt] (25.980762113533157,15)-- (25.980762113533157,13);
                    \draw [line width=1pt] (25.980762113533157,15)-- (27.71281292110203,16);
                    \draw [line width=1pt] (27.71281292110203,16)-- (30.310889132455348,16);
                    \draw [line width=1pt] (30.310889132455348,16)-- (32.042939940024226,15);
                    \draw [line width=2pt,color=ffqqqq] (32.042939940024226,15)-- (32.042939940024226,13);
                    \draw [line width=1pt] (29.01185102677869,11.75)-- (32.042939940024226,13);
                    \draw [line width=1pt] (29.01185102677869,11.75)-- (25.980762113533157,13);
                    \draw [line width=1pt] (6.023453104972012,16.005304061783896)-- (3.4641016151377553,16);
                    \draw [line width=1pt] (3.4641016151377553,16)-- (1.7320508075688772,15);
                    \draw [line width=1pt] (1.7320508075688772,15)-- (1.7320508075688772,13);
                    \draw [line width=1pt] (4.763139720814414,11.75)-- (1.7320508075688772,13);
                    \draw [line width=1pt] (38.10511776651529,15)-- (38.10511776651529,13);
                    \draw [line width=1pt] (10.825317547305483,16.25)-- (13.856406460551018,15);
                    \draw [line width=1pt] (9.526279441628825,12)-- (12.12435565298214,12);
                    \draw [line width=1pt] (12.12435565298214,12)-- (13.856406460551018,13);
                    \draw [line width=1pt] (19.918584287042084,15)-- (22.949673200287624,16.25);
                    \draw [line width=1pt] (22.949673200287624,16.25)-- (25.980762113533157,15);
                    \draw [line width=1pt] (19.918584287042084,13)-- (21.650635094610966,12);
                    \draw [line width=1pt] (21.650635094610966,12)-- (24.24871130596428,12);
                    \draw [line width=1pt] (24.24871130596428,12)-- (25.980762113533157,13);
                    \draw [line width=1pt] (32.042939940024226,15)-- (35.07402885326976,16.25);
                    \draw [line width=1pt] (35.07402885326976,16.25)-- (38.10511776651529,15);
                    \draw [line width=1pt] (32.042939940024226,13)-- (33.7749907475931,12);
                    \draw [line width=1pt] (33.7749907475931,12)-- (36.37306695894642,12);
                    \draw [line width=1pt] (36.37306695894642,12)-- (38.10511776651529,13);
                    \draw [line width=1pt] (6.023453104972012,16.005304061783896)-- (7.794228634059949,15);
                    \draw [line width=2pt,color=ffqqqq] (7.794228634059949,15)-- (7.794228634059949,13);
                    \draw [line width=1pt] (7.794228634059949,13)-- (4.763139720814414,11.75);
                    \draw [line width=1pt] (7.794228634059949,15)-- (10.825317547305483,16.25);
                    \draw [line width=1pt] (7.794228634059949,13)-- (9.526279441628825,12);
                    \draw [line width=1pt] (13.85640646055102,7)-- (13.85640646055102,5);
                    \draw [line width=1pt] (13.85640646055102,7)-- (15.588457268119898,8);
                    \draw [line width=1pt] (15.588457268119898,8)-- (18.18653347947321,8);
                    \draw [line width=1pt] (18.18653347947321,8)-- (19.918584287042084,7);
                    \draw [line width=1pt] (13.85640646055102,5)-- (16.88749537379655,3.75);
                    \draw [line width=1pt] (16.88749537379655,3.75)-- (19.918584287042084,5);
                    \draw [line width=1pt] (25.980762113533153,7)-- (25.980762113533153,5);
                    \draw [line width=1pt] (25.980762113533153,7)-- (27.71281292110203,8);
                    \draw [line width=1pt] (27.71281292110203,8)-- (30.310889132455348,8);
                    \draw [line width=1pt] (30.310889132455348,8)-- (32.04293994002422,7);
                    \draw [line width=1pt] (29.01185102677869,3.75)-- (32.04293994002422,5);
                    \draw [line width=1pt] (29.01185102677869,3.75)-- (25.980762113533153,5);
                    \draw [line width=1pt] (6.02345310497201,8.005304061783896)-- (3.464101615137755,8);
                    \draw [line width=1pt] (3.464101615137755,8)-- (1.7320508075688772,7);
                    \draw [line width=1pt] (1.7320508075688772,7)-- (1.7320508075688772,5);
                    \draw [line width=1pt] (4.763139720814413,3.75)-- (1.7320508075688772,5);
                    \draw [line width=1pt] (38.10511776651529,7)-- (38.10511776651529,5);
                    \draw [line width=1pt] (10.825317547305486,8.25)-- (13.85640646055102,7);
                    \draw [line width=1pt] (9.526279441628828,4)-- (12.124355652982144,4);
                    \draw [line width=1pt] (12.124355652982144,4)-- (13.85640646055102,5);
                    \draw [line width=1pt] (19.918584287042084,7)-- (22.94967320028762,8.25);
                    \draw [line width=1pt] (22.94967320028762,8.25)-- (25.980762113533153,7);
                    \draw [line width=1pt] (19.918584287042084,5)-- (21.650635094610962,4);
                    \draw [line width=1pt] (21.650635094610962,4)-- (24.248711305964278,4);
                    \draw [line width=1pt] (24.248711305964278,4)-- (25.980762113533153,5);
                    \draw [line width=1pt] (32.04293994002422,7)-- (35.07402885326976,8.25);
                    \draw [line width=1pt] (35.07402885326976,8.25)-- (38.10511776651529,7);
                    \draw [line width=1pt] (32.04293994002422,5)-- (33.7749907475931,4);
                    \draw [line width=1pt] (33.7749907475931,4)-- (36.37306695894642,4);
                    \draw [line width=1pt] (36.37306695894642,4)-- (38.10511776651529,5);
                    \draw [line width=1pt] (6.02345310497201,8.005304061783896)-- (7.794228634059948,7);
                    \draw [line width=1pt] (7.794228634059948,5)-- (4.763139720814413,3.75);
                    \draw [line width=1pt] (7.794228634059948,7)-- (10.825317547305486,8.25);
                    \draw [line width=1pt] (7.794228634059948,5)-- (9.526279441628828,4);
                    \draw [line width=1pt] (2.6669688996421703,-4.5068478400775405)-- (2.6669688996421703,-7.506847840077546);
                    \draw [line width=1pt] (2.6669688996421703,-7.506847840077546)-- (4.166968899642173,-10.104924051430867);
                    \draw [line width=1pt] (4.166968899642173,-10.104924051430867)-- (6.765045110995495,-11.60492405143087);
                    \draw [line width=1pt] (6.765045110995495,-11.60492405143087)-- (9.7650451109955,-11.60492405143087);
                    \draw [line width=1pt] (9.7650451109955,-11.60492405143087)-- (12.363121322348821,-10.104924051430869);
                    \draw [line width=1pt] (12.363121322348821,-10.104924051430869)-- (13.863121322348825,-7.506847840077547);
                    \draw [line width=1pt] (13.863121322348825,-7.506847840077547)-- (13.863121322348825,-4.506847840077541);
                    \draw [line width=1pt] (13.863121322348825,-4.506847840077541)-- (12.363121322348825,-1.90877162872422);
                    \draw [line width=1pt] (12.363121322348825,-1.90877162872422)-- (9.765045110995501,-0.4087716287242156);
                    \draw [line width=1pt] (9.765045110995501,-0.4087716287242156)-- (6.765045110995499,-0.4087716287242147);
                    \draw [line width=1pt] (6.765045110995499,-0.4087716287242147)-- (4.166968899642175,-1.9087716287242165);
                    \draw [line width=1pt] (4.166968899642175,-1.9087716287242165)-- (2.6669688996421703,-4.5068478400775405);
                    \draw [line width=1pt] (13.863121322348825,-4.506847840077541)-- (13.863121322348825,-7.506847840077547);
                    \draw [line width=1pt] (13.863121322348825,-7.506847840077547)-- (15.363121322348828,-10.104924051430867);
                    \draw [line width=1pt] (15.363121322348828,-10.104924051430867)-- (17.961197533702148,-11.60492405143087);
                    \draw [line width=1pt] (17.961197533702148,-11.60492405143087)-- (20.96119753370215,-11.60492405143087);
                    \draw [line width=1pt] (20.96119753370215,-11.60492405143087)-- (23.559273745055474,-10.104924051430869);
                    \draw [line width=1pt] (23.559273745055474,-10.104924051430869)-- (25.059273745055478,-7.506847840077548);
                    \draw [line width=1pt] (25.059273745055478,-7.506847840077548)-- (25.059273745055478,-4.506847840077542);
                    \draw [line width=1pt] (25.059273745055478,-4.506847840077542)-- (23.559273745055478,-1.9087716287242218);
                    \draw [line width=1pt] (23.559273745055478,-1.9087716287242218)-- (20.961197533702155,-0.40877162872421735);
                    \draw [line width=1pt] (20.961197533702155,-0.40877162872421735)-- (17.96119753370215,-0.40877162872421646);
                    \draw [line width=1pt] (17.96119753370215,-0.40877162872421646)-- (15.363121322348828,-1.9087716287242182);
                    \draw [line width=1pt] (15.363121322348828,-1.9087716287242182)-- (13.863121322348825,-4.506847840077541);
                    \draw [line width=1pt] (25.059273745055478,-4.506847840077542)-- (25.059273745055478,-7.506847840077548);
                    \draw [line width=1pt] (25.059273745055478,-7.506847840077548)-- (26.55927374505548,-10.104924051430867);
                    \draw [line width=1pt] (26.55927374505548,-10.104924051430867)-- (29.1573499564088,-11.60492405143087);
                    \draw [line width=1pt] (29.1573499564088,-11.60492405143087)-- (32.15734995640881,-11.60492405143087);
                    \draw [line width=1pt] (32.15734995640881,-11.60492405143087)-- (34.755426167762124,-10.10492405143087);
                    \draw [line width=1pt] (34.755426167762124,-10.10492405143087)-- (36.25542616776213,-7.5068478400775485);
                    \draw [line width=1pt] (36.25542616776213,-7.5068478400775485)-- (36.25542616776213,-4.506847840077543);
                    \draw [line width=1pt] (36.25542616776213,-4.506847840077543)-- (34.75542616776213,-1.9087716287242227);
                    \draw [line width=1pt] (34.75542616776213,-1.9087716287242227)-- (32.15734995640881,-0.40877162872421824);
                    \draw [line width=1pt] (32.15734995640881,-0.40877162872421824)-- (29.157349956408805,-0.40877162872421735);
                    \draw [line width=1pt] (29.157349956408805,-0.40877162872421735)-- (26.55927374505548,-1.9087716287242191);
                    \draw [line width=1pt] (26.55927374505548,-1.9087716287242191)-- (25.059273745055478,-4.506847840077542);
                    \draw [line width=1.5pt,dash pattern=on 3pt off 3pt,{Stealth[]}-] (19.918584287042087,0.5)-- (19.918584287042087,3);
                   
                    \draw [line width=1.5pt,dash pattern=on 3pt off 3pt,{Stealth[]}-] (19.918584287042087,8.5)-- (19.918584287042087,11);
                \begin{scriptsize}
                    \draw [fill=ududff] (13.856406460551018,15) circle (1.5pt);
                    \draw [fill=ududff] (13.856406460551018,13) circle (1.5pt);
                    \draw [fill=ududff] (15.588457268119894,16) circle (1.5pt);
                    \draw [fill=ududff] (18.18653347947321,16) circle (1.5pt);
                    \draw [fill=ududff] (19.918584287042084,15) circle (1.5pt);
                    \draw [fill=ududff] (19.918584287042084,13) circle (1.5pt);
                    \draw [fill=ududff] (16.88749537379655,11.75) circle (1.5pt);
                    \draw [fill=ududff] (25.980762113533157,15) circle (1.5pt);
                    \draw [fill=ududff] (25.980762113533157,13) circle (1.5pt);
                    \draw [fill=ududff] (27.71281292110203,16) circle (1.5pt);
                    \draw [fill=ududff] (30.310889132455348,16) circle (1.5pt);
                    \draw [fill=ududff] (32.042939940024226,15) circle (1.5pt);
                    \draw [fill=ududff] (32.042939940024226,13) circle (1.5pt);
                    \draw [fill=ududff] (29.01185102677869,11.75) circle (1.5pt);
                    \draw [fill=ududff] (6.023453104972012,16.005304061783896) circle (1.5pt);
                    \draw [fill=ududff] (3.4641016151377553,16) circle (1.5pt);
                    \draw [fill=ududff] (1.7320508075688772,15) circle (1.5pt);
                    \draw [fill=ududff] (1.7320508075688772,13) circle (1.5pt);
                    \draw [fill=ududff] (4.763139720814414,11.75) circle (1.5pt);
                    \draw [fill=ududff] (38.10511776651529,15) circle (1.5pt);
                    \draw [fill=ududff] (38.10511776651529,13) circle (1.5pt);
                    \draw [fill=ududff] (10.825317547305483,16.25) circle (1.5pt);
                    \draw [fill=ududff] (9.526279441628825,12) circle (1.5pt);
                    \draw [fill=ududff] (12.12435565298214,12) circle (1.5pt);
                    \draw [fill=ududff] (22.949673200287624,16.25) circle (1.5pt);
                    \draw [fill=ududff] (21.650635094610966,12) circle (1.5pt);
                    \draw [fill=ududff] (24.24871130596428,12) circle (1.5pt);
                    \draw [fill=ududff] (35.07402885326976,16.25) circle (1.5pt);
                    \draw [fill=ududff] (33.7749907475931,12) circle (1.5pt);
                    \draw [fill=ududff] (36.37306695894642,12) circle (1.5pt);
                    \draw [fill=ududff] (7.794228634059949,15) circle (1.5pt);
                    \draw [fill=ududff] (7.794228634059949,13) circle (1.5pt);
                    \draw [fill=ududff] (13.85640646055102,7) circle (1.5pt);
                    \draw [fill=ududff] (13.85640646055102,5) circle (1.5pt);
                    \draw [fill=ududff] (15.588457268119898,8) circle (1.5pt);
                    \draw [fill=ududff] (18.18653347947321,8) circle (1.5pt);
                    \draw [fill=ududff] (19.918584287042084,7) circle (1.5pt);
                    \draw [fill=ududff] (19.918584287042084,5) circle (1.5pt);
                    \draw [fill=ududff] (16.88749537379655,3.75) circle (1.5pt);
                    \draw [fill=ududff] (25.980762113533153,7) circle (1.5pt);
                    \draw [fill=ududff] (25.980762113533153,5) circle (1.5pt);
                    \draw [fill=ududff] (27.71281292110203,8) circle (1.5pt);
                    \draw [fill=ududff] (30.310889132455348,8) circle (1.5pt);
                    \draw [fill=ududff] (32.04293994002422,7) circle (1.5pt);
                    \draw [fill=ududff] (32.04293994002422,5) circle (1.5pt);
                    \draw [fill=ududff] (29.01185102677869,3.75) circle (1.5pt);
                    \draw [fill=ududff] (6.02345310497201,8.005304061783896) circle (1.5pt);
                    \draw [fill=ududff] (3.464101615137755,8) circle (1.5pt);
                    \draw [fill=ududff] (1.7320508075688772,7) circle (1.5pt);
                    \draw [fill=ududff] (1.7320508075688772,5) circle (1.5pt);
                    \draw [fill=ududff] (4.763139720814413,3.75) circle (1.5pt);
                    \draw [fill=ududff] (38.10511776651529,7) circle (1.5pt);
                    \draw [fill=ududff] (38.10511776651529,5) circle (1.5pt);
                    \draw [fill=ududff] (10.825317547305486,8.25) circle (1.5pt);
                    \draw [fill=ududff] (9.526279441628828,4) circle (1.5pt);
                    \draw [fill=ududff] (12.124355652982144,4) circle (1.5pt);
                    \draw [fill=ududff] (22.94967320028762,8.25) circle (1.5pt);
                    \draw [fill=ududff] (21.650635094610962,4) circle (1.5pt);
                    \draw [fill=ududff] (24.248711305964278,4) circle (1.5pt);
                    \draw [fill=ududff] (35.07402885326976,8.25) circle (1.5pt);
                    \draw [fill=ududff] (33.7749907475931,4) circle (1.5pt);
                    \draw [fill=ududff] (36.37306695894642,4) circle (1.5pt);
                    \draw [fill=ududff] (7.794228634059948,7) circle (1.5pt);
                    \draw [fill=ududff] (7.794228634059948,5) circle (1.5pt);
                    \draw [fill=ududff] (2.6669688996421703,-4.5068478400775405) circle (1.5pt);
                    \draw [fill=ududff] (2.6669688996421703,-7.506847840077546) circle (1.5pt);
                    \draw [fill=ududff] (4.166968899642173,-10.104924051430867) circle (1.5pt);
                    \draw [fill=ududff] (6.765045110995495,-11.60492405143087) circle (1.5pt);
                    \draw [fill=ududff] (9.7650451109955,-11.60492405143087) circle (1.5pt);
                    \draw [fill=ududff] (12.363121322348821,-10.104924051430869) circle (1.5pt);
                    \draw [fill=ududff] (13.863121322348825,-7.506847840077547) circle (1.5pt);
                    \draw [fill=ududff] (13.863121322348825,-4.506847840077541) circle (1.5pt);
                    \draw [fill=ududff] (12.363121322348825,-1.90877162872422) circle (1.5pt);
                    \draw [fill=ududff] (9.765045110995501,-0.4087716287242156) circle (1.5pt);
                    \draw [fill=ududff] (6.765045110995499,-0.4087716287242147) circle (1.5pt);
                    \draw [fill=ududff] (4.166968899642175,-1.9087716287242165) circle (1.5pt);
                    \draw [fill=ududff] (15.363121322348828,-10.104924051430867) circle (1.5pt);
                    \draw [fill=ududff] (17.961197533702148,-11.60492405143087) circle (1.5pt);
                    \draw [fill=ududff] (20.96119753370215,-11.60492405143087) circle (1.5pt);
                    \draw [fill=ududff] (23.559273745055474,-10.104924051430869) circle (1.5pt);
                    \draw [fill=ududff] (25.059273745055478,-7.506847840077548) circle (1.5pt);
                    \draw [fill=ududff] (25.059273745055478,-4.506847840077542) circle (1.5pt);
                    \draw [fill=ududff] (23.559273745055478,-1.9087716287242218) circle (1.5pt);
                    \draw [fill=ududff] (20.961197533702155,-0.40877162872421735) circle (1.5pt);
                    \draw [fill=ududff] (17.96119753370215,-0.40877162872421646) circle (1.5pt);
                    \draw [fill=ududff] (15.363121322348828,-1.9087716287242182) circle (1.5pt);
                    \draw [fill=ududff] (26.55927374505548,-10.104924051430867) circle (1.5pt);
                    \draw [fill=ududff] (29.1573499564088,-11.60492405143087) circle (1.5pt);
                    \draw [fill=ududff] (32.15734995640881,-11.60492405143087) circle (1.5pt);
                    \draw [fill=ududff] (34.755426167762124,-10.10492405143087) circle (1.5pt);
                    \draw [fill=ududff] (36.25542616776213,-7.5068478400775485) circle (1.5pt);
                    \draw [fill=ududff] (36.25542616776213,-4.506847840077543) circle (1.5pt);
                    \draw [fill=ududff] (34.75542616776213,-1.9087716287242227) circle (1.5pt);
                    \draw [fill=ududff] (32.15734995640881,-0.40877162872421824) circle (1.5pt);
                    \draw [fill=ududff] (29.157349956408805,-0.40877162872421735) circle (1.5pt);
                    \draw [fill=ududff] (26.55927374505548,-1.9087716287242191) circle (1.5pt);
                \end{scriptsize}
            \end{tikzpicture}
        \vspace{-2.5em}
        \caption{Omitting the edges $a_{{2i}}$ from the alternate arrangement of heptagons}
        \label{fig: Omitting edges}
    \end{figure}

    In $\mathcal{O}^{\mathcal{A}}_{n,k}$, after excluding the edges $a_{{2i}}$, for all $1\leq i \leq k$, the structure we get has an equal number of edges above and below the attached edges. Whereas, in $\mathcal{O}^{\mathcal{S}}_{n,k}$, after excluding the edges $a_{{2i}}$, the resulting structure has \textit{two} more edges above the attached edge than the edges below it. Hence, this case cannot be solved as a corollary to \Cref{thm: homotopy of 2n-gon}.
    
    We first complete the case when the polygons are attached alternatively.

    \begin{theorem}\label{Theorem 6}
         Let us consider the alternate arrangement of $2k-\left(2n+1\right)-$gon, that is, $\mathcal{O}^{\mathcal{A}}_{n,k}$, for $n\geq 2$ and $k \geq 2$. Then the perfect matching complex of $\mathcal{O}^{\mathcal{A}}_{n,k}$, that is, $\pma$ is contractible.
    \end{theorem}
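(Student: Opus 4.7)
The plan is to derive \Cref{Theorem 6} as an almost immediate corollary of \Cref{thm: homotopy of 2n-gon} via \Cref{lemma 5.1}. First I would invoke \Cref{lemma 5.1}, which states that no perfect matching of $\mathcal{O}^{\mathcal{A}}_{n,k}$ contains any edge $a_{2i}$ with $1\leq i \leq k$. Consequently these edges are not vertices of any facet of $\pma$, so $\pma$ is literally equal (as an abstract simplicial complex) to $\mathcal{M}_p(G')$, where $G'$ denotes the subgraph of $\mathcal{O}^{\mathcal{A}}_{n,k}$ obtained by deleting the set of edges $\{a_{2i} : 1\leq i\leq k\}$.

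Next I would identify $G'$ combinatorially. Deleting $a_{2i}$ fuses the adjacent polygons $O_{2i-1}$ and $O_{2i}$ into a single polygon. Reading off the labelling in \Cref{fig: Alternate Arrangement Labelling}, $O_{2i-1}$ contributes $n$ upper edges and $n-1$ lower edges to the fusion, while $O_{2i}$ contributes $n-1$ upper edges and $n$ lower edges; together with the remaining connector edges $a_{2i-1}$ and $a_{2i+1}$ this yields a single polygon with $(2n-1)+(2n-1)+2=4n$ edges, possessing the same number $2n-1$ of upper and lower edges. Hence $G'$ is precisely (isomorphic to) the line tiling $\mathcal{E}_{2n,k}$ of $k$ copies of a $(4n)$-gon, in the sense of Section \ref{Section 4}.

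Finally I would apply \Cref{thm: homotopy of 2n-gon} to $\mathcal{E}_{2n,k}$. That theorem requires $k\geq 2$ and that the polygon side count $2m$ satisfies $m\geq 3$; in our application $m=2n$, which satisfies $m\geq 4$ since $n\geq 2$. Therefore $\mathcal{M}_p(\mathcal{E}_{2n,k})\simeq *$, and combining with the identification $\pma = \mathcal{M}_p(G')$ gives $\pma\simeq *$, as required.

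The only non-routine point is the edge bookkeeping in the second step: one must check that the fused pair inherits the above/below symmetry demanded by Section \ref{Section 4}. This is exactly where the alternating arrangement is essential — the ``extra upper edge'' of $O_{2i-1}$ is balanced by the ``extra lower edge'' of $O_{2i}$. The same balancing fails in the simple arrangement $\mathcal{O}^{\mathcal{S}}_{n,k}$ (after deletion of the $a_{2i}$ one ends up with two surplus upper edges per merged pair), which is precisely why \Cref{Theorem 6} cannot be extended to that case by this route and why a different argument is needed for $\pms$.
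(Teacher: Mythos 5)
Your proposal is correct and follows essentially the same route as the paper: invoke \Cref{lemma 5.1} to discard the edges $a_{2i}$, identify the resulting complex with $\mathcal{M}_p\left(\mathcal{E}_{2n,k}\right)$, and conclude by \Cref{thm: homotopy of 2n-gon}. The only difference is that you spell out the edge bookkeeping showing the fused pair of $(2n+1)$-gons is a $4n$-gon with the required above/below symmetry, a verification the paper leaves implicit (and your check that $2n\geq 3$ holds, so the even-sided theorem applies, is exactly the right hypothesis to confirm).
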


    \begin{proof}
        From \Cref{lemma 5.1}, $\pma = \mathcal{M}_p\left(\mathcal{E}_{2n,k}\right)$. Hence, using \Cref{theorem 2}, $\pma$ is contractible.
    \end{proof}

    We now discuss the homotopy type of the perfect matching complex of the general polygon line tiling of triangles. We have already discussed that only an alternate arrangement of triangles is possible in a line tiling; thus, we can use the above lemma to get the desired result. Let $\Delta_k$ denote the general line tiling of $2k-$triangles, where $k\geq 2$, that is, $2k$ number of triangles are in the line tiling.

    \begin{theorem}\label{Theorem 7}
        Let us consider the general polygon line tiling of $2k-$triangles, that is,$\Delta_k$, $k\geq 2$. Let $\pmt$ denote the perfect matching complex of $\Delta_k$. If
        
        \begin{enumerate}[label = \roman*.]
            \item $k$ is even, then $\pmt$ is contractible;
            \item $k$ is odd, then $\pmt$ is homotopic to $\left\lfloor{\frac{k}{2}}\right\rfloor$-sphere.
        \end{enumerate}
    \end{theorem}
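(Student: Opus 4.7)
My plan is to reduce the theorem to \Cref{thm: 2n grid graph homotopy} by first discarding the edges forbidden by \Cref{lemma 5.1} and then recognizing the resulting graph as a grid graph.

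The first step is to observe that the parity argument used in the proof of \Cref{lemma 5.1} is insensitive to the number of sides of each polygon; it depends only on the (odd) count of polygons lying strictly to the left of the attaching edge $a_{2i}$. Hence the lemma applies to $\Delta_k$ even though \Cref{Theorem 6} is stated for $n\geq 2$. Consequently, every perfect matching of $\Delta_k$ avoids each of $a_2, a_4, \ldots, a_{2k}$, and therefore $\pmt = \mathcal{M}_p(\Delta_k')$, where $\Delta_k' := \Delta_k \setminus \{a_2, a_4, \ldots, a_{2k}\}$.

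The second step is to exhibit a graph isomorphism $\phi : \Delta_k' \to \mathcal{G}_{2 \times (k+1)}$. Label the $2k+2$ boundary vertices of $\Delta_k$ consecutively as $v_0, v_1, \ldots, v_{2k+1}$, so that the $i$-th triangle of the strip has vertex set $\{v_{i-1}, v_i, v_{i+1}\}$ and the attaching edges are exactly $a_j = \{v_{j-1}, v_j\}$ for $1 \leq j \leq 2k+1$. Define $\phi(v_{2j}) = p_{j+1,1}$ and $\phi(v_{2j+1}) = p_{j+1,2}$ for $0 \leq j \leq k$. An edge-by-edge check then shows that each surviving attaching edge $\{v_{2j}, v_{2j+1}\}$ maps to the vertical edge of the $(j{+}1)$-th column of the grid, each triangle edge of the form $\{v_{2j}, v_{2j+2}\}$ maps to a lower horizontal edge, and each triangle edge of the form $\{v_{2j+1}, v_{2j+3}\}$ maps to an upper horizontal edge. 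Both graphs have exactly $3k+1$ edges, so $\phi$ is an isomorphism.

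The final step is to invoke \Cref{thm: 2n grid graph homotopy} with $n = k+1$. When $k$ is even, $k+1$ is odd and $\pmt$ is contractible. When $k$ is odd, write $k+1 = 2\ell+2$ with $\ell = (k-1)/2 = \lfloor k/2 \rfloor$, so $\pmt \simeq \mathbb{S}^{\lfloor k/2 \rfloor}$, as claimed. No substantive obstacle is anticipated: all of the topological content has already been extracted inside \Cref{thm: 2n grid graph homotopy}, and the only remaining work is the routine verification that \Cref{lemma 5.1} survives the $n=1$ degeneracy together with the bookkeeping for the isomorphism $\phi$ in the second step.
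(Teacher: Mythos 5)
Your proposal is correct and follows essentially the same route as the paper: apply \Cref{lemma 5.1} to discard the edges $a_2,a_4,\dots,a_{2k}$ (which lie in no perfect matching), identify the resulting graph with $\mathcal{G}_{2\times(k+1)}$, and invoke \Cref{thm: 2n grid graph homotopy}. The only difference is that you spell out the isomorphism and the $n=1$ applicability of the lemma explicitly, which the paper leaves implicit.
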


    \begin{proof}
        From \Cref{lemma 5.1}, $\pmt = \mathcal{M}_p\left(\mathcal{G}_{2\times \left(k+1\right)}\right)$. Hence, we get the desired result using \Cref{thm: 2n grid graph homotopy}.
    \end{proof}

    We now conclude by discussing the homotopy type of the perfect matching complex of $\mathcal{O}^{\mathcal{S}}_{n,k}$. 
    Let $\pms$ denote the perfect matching complex of $\mathcal{O}^{\mathcal{S}}_{n,k}$. 
    
    \begin{theorem}\label{Theorem 8}
        For $n\geq 2$ and $k \geq 2$, the perfect matching complex of $\mathcal{O}^{\mathcal{S}}_{n,k}$, $\pms$ is contractible.
    \end{theorem}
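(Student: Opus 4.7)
Both preparatory results extend to this case. By the remark following \Cref{lemma 5.1}, every perfect matching of $\mathcal{O}^{\mathcal{S}}_{n,k}$ avoids all edges $a_{2i}$ for $1 \leq i \leq k$, so these edges are not vertices of $\pms$; in particular, the condition $a_2 \notin \sigma$ holds automatically for every cell $\sigma \in \pms$. The plan is to imitate the discrete Morse argument in \Cref{thm: homotopy of 2n-gon}: apply three successive element pairings on $\pms$ and show that no critical cells remain, so that \Cref{Corollary 1} forces $\pms \simeq *$.

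The first two pairings are with $a_1$ and then $b_{1,1}$, carried out exactly as in the proof of \Cref{thm: homotopy of 2n-gon}. The derivation of
\[
C_{b_{1,1}} = \left\{\sigma \in \pms \;\middle|\; a_1 \notin \sigma,\ b_{1,1} \in \sigma,\ \sigma \cup \{a_1\} \notin \pms,\ \sigma \cup \{a_1\} \setminus \{b_{1,1}\} \in \pms\right\}
\]
is entirely local to the first polygon $O_1$, which is still a single cycle containing the same key edges $a_1,b_{1,1},c_{1,1}$ incident to $u_{1,1}$ and $l_{1,1}$; the four-branch case analysis therefore transfers verbatim.

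The crux is the third pairing. Using $b_{1,1}\in\sigma$ and $\sigma\cup\{a_1\}\setminus\{b_{1,1}\}\in\pms$, the same parity counting as in \Cref{thm: homotopy of 2n-gon} pins down, for each $\sigma\in C_{b_{1,1}}$, which edges of $O_1$ and $O_2$ can appear in $\sigma$. Because each $(2n+1)$-gon contributes $n$ upper edges but only $n-1$ lower edges, the admissible indices for $b_{2,j}$ and $c_{2,j}$ shift by one compared with the $2n$-gon case; I expect the correct third element to be $c_{2,n-1}$ when $n$ is even and $c_{2,n-2}$ when $n$ is odd (the opposite parity from \Cref{thm: homotopy of 2n-gon}). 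The verification that $C_{c_{2,t}}=\emptyset$ then follows the same four-case scheme, with the crucial difference that the forced exclusion $a_2\notin\sigma$ supplies the parity compensation that, in the even-sided case, came from having equally many upper and lower edges per polygon.

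The principal obstacle is bookkeeping rather than topology: the asymmetry of $n$ upper versus $n-1$ lower edges forces several branches of the four-case argument to be rechecked with one fewer vertex on the bottom, and one must confirm that the forced absence of each $a_{2i}$ exactly compensates for this shift without producing new uncovered vertices elsewhere in the tiling. I expect this to reduce to a routine finite parity check along the lines already carried out in \Cref{thm: homotopy of 2n-gon}, with no new topological input required.
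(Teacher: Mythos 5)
Your overall strategy (three element pairings $a_1$, then $b_{1,1}$, then one forced $c$-edge) is the same as the paper's, and your treatment of the first two pairings is fine: the characterization of $C_{b_{1,1}}$ is local to $u_{1,1}$ and $l_{1,1}$ and transfers verbatim. The gap is in the choice of the third edge: neither $c_{2,n-1}$ nor $c_{2,n-2}$ (nor any other edge of $O_1\cup O_2$) can work, and your heuristic that the forced exclusion of $a_2$ ``supplies the parity compensation'' is exactly backwards. Since $a_2$ lies in no perfect matching, the pair $O_1\cup O_2$ behaves as a single $4n$-cycle, and once one decides whether $u_{1,1}$ is covered by $a_1$ or by $b_{1,1}$, the restriction of a perfect matching to $O_1\cup O_2$ is \emph{completely determined}: with $a_1$ the bottom path forces $c_{1,2},c_{1,4},\dots,c_{2,n-1}$ and the top path forces $b_{1,2},\dots,b_{2,n}$; with $b_{1,1}$ it forces $c_{1,1},c_{1,3},\dots,c_{2,n-2}$ and $b_{1,3},\dots,b_{2,n-1}$. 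These two partial matchings are edge-disjoint on $O_1\cup O_2$. An element $\sigma\in C_{b_{1,1}}$ gets paired by a third edge $e$ only if both $\sigma\cup\{e\}\in\pms$ (so $e$ must be compatible with $b_{1,1}$) and $\sigma\cup\{a_1,e\}\setminus\{b_{1,1}\}\in\pms$ (so $e$ must be compatible with $a_1$); no edge of $O_1\cup O_2$ satisfies both. Concretely, $\{b_{1,1},c_{2,n-1}\}$ and $\{a_1,c_{2,n-2}\}$ are both bad matchings (in each case an odd number of degree-two bottom vertices is trapped between the two chosen edges), so with either of your candidates the pairing matches nothing and all of $C_{b_{1,1}}$ remains critical.

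The repair is to take the third edge from the \emph{second} fused pair $O_3\cup O_4$, as the paper does with $c_{4,n-1}$ (the bottom edge meeting $l_{5,1}$). The point is that the two worlds reconverge there: in the unique perfect matching containing $a_1$, the vertices $u_{3,1},l_{3,1}$ are absorbed by $b_{2,n},c_{2,n-1}$, leaving an odd number of interior bottom vertices in $O_3\cup O_4$ and forcing $c_{4,n-1}$; in the relevant perfect matching containing $b_{1,1}$ (the one with $a_3$ present, which is the one every $\sigma\in C_{b_{1,1}}$ must sit inside, since $\sigma\setminus\{b_{1,1}\}$ is constrained to lie in the rigid $a_1$-matching on all later units), the same parity count again forces $c_{4,n-1}$. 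Hence $c_{4,n-1}$ can always be added both to $\sigma$ and to $\sigma\cup\{a_1\}\setminus\{b_{1,1}\}$, the third pairing empties $C_{b_{1,1}}$, and contractibility follows from \Cref{Corollary 1}. So your proposal needs this one substantive correction; the rest of the bookkeeping is as routine as you anticipate.
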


    \begin{proof}
        We perform a sequence of element pairings on $\mathcal{M}_p(\mathcal{O}^{\mathcal{S}}_{n,k})$ using the edges $a_{{1}}$, $b_{{1,1}}$ and $c_{{4,n-1}}$, respectively, of $\mathcal{O}^{\mathcal{S}}_{n,k}$. Ultimately, we are left with no critical cells, making our simplicial complex contractible. The proof follows similar steps as that of the proof of \Cref{thm: homotopy of 2n-gon} (precisely the case when $n$ was taken odd), with slight modification in notations.
    \end{proof}

\section{Concluding remarks and future directions}

We found the homotopy type of the perfect matching complex of $\twongg$ and polygon line tiling (for both even and odd-sided polygons). Our primary tool in determining this was discrete Morse theory and characterizing all the bad matchings of $\twongg$. 

We discussed how polygon line tiling can be considered a generalization of $\twongg$. Another observation drawn from $\twongg$ is that it is a member of the general $\left(m \times n\right)$-grid graph family, where $m,n\geq2$. Thus, we raise an obvious question about what we can say about the topology of the perfect matching complex of $\mathcal{G}_{m\times n}$. Motivated by this idea, we performed some calculations on SageMath (\cite{sagemath}), and based on the information we obtained, we conjecture the following.

\begin{conj}
    For all $m,n\ge 1$, the perfect matching complex of $\mathcal{G}_{m\times n}$ is homotopy equivalent to a wedge of spheres. 
\end{conj}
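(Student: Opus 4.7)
My plan is to tackle the conjecture by lifting the framework developed for $\mathcal{G}_{2\times n}$ to the general grid. The first task is to generalize \Cref{Lemma 1}: I would characterize the bad matchings of $\mathcal{G}_{m\times n}$ by a Tutte-style analysis. A matching $\pi$ is bad precisely when $\mathcal{G}_{m\times n}[V \setminus V_\pi]$ contains an odd component and no strict subset of $\pi$ already forces this. Concretely, I would look for the minimal edge sets that carve off an odd-sized pocket of the grid; these should be classifiable by tracking which column/row cuts leave an odd parity imbalance on either side, much as the pairs $\{b_i, c_{i+1}\}$ and $\{c_i, b_{i+1}\}$ did in the $(2\times n)$ case.

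With such a classification in hand, I would build an abstract simplicial complex $\mathcal{X}_{m,n}$ on vertex set $E(\mathcal{G}_{m\times n})$ whose minimal non-faces are (i) pairs of edges sharing a common vertex and (ii) the minimal bad matchings, so that $\mathcal{X}_{m,n} = \mathcal{M}_p(\mathcal{G}_{m\times n})$ by definition. If all minimal bad matchings turn out to be pairs (as they do for $m=2$), then $\mathcal{X}_{m,n}$ is the independence complex of an honest graph, and I would try to iterate \Cref{foldlemma}, peeling off the last column and reducing $\ind(X_{m,n})$ to $\ind(X_{m,n-2} \sqcup H_m)$ for some auxiliary graph $H_m$ whose independence complex is a wedge of spheres. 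Induction on $n$ then yields the conclusion, with the base cases $\mathcal{G}_{m\times 1}$ (a path) and $\mathcal{G}_{m\times 2}$ (a ladder) handled directly. An analogous outer induction on $m$ could be set up using Matsushita's and Bayer et al.'s results for paths/ladders as seeds.

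The main obstacle is that for $m \geq 3$ the minimal bad matchings are \emph{not} all of cardinality two: one can enclose an odd pocket with three or more edges meeting the boundary of the grid, so the clutter of bad matchings is genuinely higher-dimensional. Consequently, $\mathcal{M}_p(\mathcal{G}_{m\times n})$ is not an independence complex of a graph and \Cref{foldlemma} does not apply directly. I expect the real work to be constructing an acyclic matching \`a la \Cref{theorem 2} by a sequence of element pairings chosen along, say, the leftmost column's vertical and horizontal edges, and then showing by a careful case analysis (mirroring the A1--A3 reasoning in \Cref{Lemma 1}) that the surviving critical cells are concentrated in a single dimension, so that \Cref{Corollary 1} delivers a wedge of equidimensional spheres. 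Verifying that the dimension and number of critical cells match the data gathered in SageMath, and in particular that no critical cells of mixed dimension survive, will be the delicate heart of the argument.
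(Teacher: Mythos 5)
The statement you are addressing is left as a conjecture in the paper: the authors state it on the basis of SageMath computations and offer no proof, so there is no argument of theirs to compare against. Your text is a research programme rather than a proof, and by your own admission it stalls exactly where the real difficulty lies. For $m\ge 3$ the minimal bad matchings are no longer all of cardinality two, so $\mathcal{M}_p(\mathcal{G}_{m\times n})$ is not the independence complex of a graph and \Cref{foldlemma} is unavailable; you flag this but supply no substitute. The proposed classification of bad matchings via odd components is the right heuristic (a minimal non-face of $\mathcal{M}_p(G)$ is either a non-matching pair or a minimal bad matching), but you never carry it out, and in a wide grid an odd pocket can be cut off by edge sets of unbounded size and of many shapes, so "classifiable by tracking column/row cuts" is an assertion, not an argument.

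The second gap is in the discrete Morse step. You plan to produce a sequence of element pairings whose critical cells all lie in one dimension and then invoke \Cref{Corollary 1}. No such sequence is exhibited, and there is no a priori reason the critical cells should be equidimensional: a wedge of spheres need not be equidimensional, and if critical cells survive in several dimensions, \Cref{theorem 1} only gives a CW-complex with those cells, not a wedge of spheres — one would then have to control the attaching maps, which is precisely the hard part. (Indeed, even in the paper's solved cases the answer is a single sphere or a point, so the $2\times n$ evidence gives no guidance on how mixed-dimensional critical data would be resolved.) As it stands, your proposal identifies the obstacles accurately but does not close either of them, so the conjecture remains open.
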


The analysis of the topology of the perfect matching complex could be quite interesting because one might be able to understand the interplay between the matching complexes and perfect matching complexes from it. For instance, due to \cite{Matsushita2018MatchingCO}, we have all the information about the homotopy type of the matching complex of $\twongg$. Thus, the next apparent aim could be to find a relationship between the matching complex and the perfect matching complex of $\twongg$.

\section*{Acknowledgements}
     We thank the anonymous referees for their insightful comments and suggestions, which have greatly enhanced the clarity and presentation of this article. Himanshu Chandrakar gratefully acknowledges the assistance provided by the Council of Scientific and Industrial Research (CSIR), India, through grant 09/1237(15675)/2022-EMR-I. Anurag Singh is partially supported by the Start-up Research Grant SRG/2022/000314 from SERB, DST, India.

\subsection*{Data availability} No data was gathered or used in this paper, so a “data availability statement” is not applicable.

\subsection*{Conflict of interest} The author states that there is no conflict of interest.

\bibliographystyle{abbrv}
\bibliography{new}
\addresseshere

\newpage
\section*{Appendix}

In the accepted version of our article, we omitted the original proof of \Cref{thm: 2n grid graph homotopy}, which uses element pairing, and retained the shorter proof suggested by the anonymous referee, which is based on the fold lemma of the independence complex.

The article remains identical to the version accepted by the journal. However, we are including our original proof in the appendix for reference.

\begin{proof}[Proof of \Cref{thm: 2n grid graph homotopy}]
We will perform the following element pairings to prove this result:

    \begin{itemize}
        \item \textbf{When $n$ is odd}, we will start element pairing using $a_{1}$, followed by $a_{3}$,$a_{5}$,$\dots$,$a_n$, respectively. In this case, we will have no elements left in the critical cell of the last element pairing, and thus, $\mathcal{M}_p \left(\mathcal{G}_{2\times n}\right)$ is contractible using \Cref{Corollary 1}.
        \item \textbf{When $n$ is even}, we will start element pairing using $a_{1}$, followed by $b_{1}$,$a_{3}$,$b_{3}$, $a_{5}$,$b_{5}$,$\dots$,$a_{n-1}$, $b_{n-1}$, respectively. In this case we will have the set $\left\{b_{1},b_{3},\dots,b_{n-1}\right\}$ as the only critical cell after last element pairing. Note that, the above set has exactly $k+1$ number of elements, where $n=2k+2$, and thus, $\mathcal{M}_p \left(\mathcal{G}_{2\times n}\right) \simeq \mathbb{S}^k$, using using \Cref{Corollary 1}.
    \end{itemize}

    \begin{case}
        \textbf{When $n$ is odd.} \label{2n odd case}
    \end{case}

    Let $n=2k+1$. For $0\leq m \leq k$, we will consecutively perform element pairing using $a_{{2m+1}}$ starting at $m=0$ and ending at $m=k$. We will use the induction method to identify critical cells after each pairing. Our claim for the set of critical cells is as follows.

    \begin{claim}\label{claim:grid n odd case}
        For $0\leq m \leq k-1$, we will get the following set of critical cells after element pairing with $a_{{2m+1}}$, 
        \[
        C_{a_{{2m+1}}} = \left\{ \sigma \in \pmcgg\ \middle|\begin{array}{c}
        a_{{2i+1}} \notin \sigma,\sigma \cup \left\{a_{{2i+1}}\right\} \notin \pmcgg,\\
        \text{for } i\in\left\{0,1,2,\dots,m\right\} \end{array}\right\}.\]
        Moreover, $C_{a_{{2k+1}}} = \emptyset$. If $\sigma \in C_{a_{{2m+1}}}$, for $1\leq m \leq k-1$ then $\sigma \cap \left\{b_{{2m+1}},c_{{2m+1}}\right\} \neq \emptyset$. As consequence of this and \Cref{Lemma 1}, we have $\sigma \cap \left\{b_{{2m+2}}, c_{{2m+2}}\right\} = \emptyset$.
    \end{claim}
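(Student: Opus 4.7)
The plan is to prove the three assertions of the claim in sequence: first the characterisation of $C_{a_{2m+1}}$ for $0\le m\le k-1$ by induction on $m$; then the consequence $\sigma\cap\{b_{2m+1},c_{2m+1}\}\ne\emptyset$ via a forced-edge chase using the established form; and finally $C_{a_{2k+1}}=\emptyset$ via the same chase reasoning.

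For the form, the base case $m=0$ is immediate from \Cref{def: Element Pairing}. For the inductive step, I will analyse when a cell $\sigma\in C_{a_{2m-1}}$ remains unpaired under element pairing with $v:=a_{2m+1}$. If $v\in\sigma$, I will show $\sigma\setminus\{v\}\in C_{a_{2m-1}}$, which forces $\sigma$ into the pair $(\sigma\setminus\{v\},\sigma)$ and rules this case out. The key observation is that for $\sigma\in C_{a_{2m-1}}$, each non-membership $\sigma\cup\{a_{2i+1}\}\notin\pmcgg$ (with $i\le m-1$) is necessarily witnessed by a direct vertex conflict between $a_{2i+1}$ and a horizontal edge of $\sigma$: the alternative---a bad matching inside $\sigma\cup\{a_{2i+1}\}$---is impossible, since bad matchings are purely horizontal by \Cref{Lemma 1} and so would already lie inside $\sigma\in\pmcgg$. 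Removing the vertical edge $v=a_{2m+1}$ leaves that horizontal conflict intact, so $\sigma\setminus\{v\}\in C_{a_{2m-1}}$ as claimed. If $v\notin\sigma$, simpliciality of $\pmcgg$ gives $\sigma\cup\{v,a_{2i+1}\}\in\pmcgg\Rightarrow\sigma\cup\{a_{2i+1}\}\in\pmcgg$, contradicting $\sigma\in C_{a_{2m-1}}$; so $\sigma\cup\{v\}\in C_{a_{2m-1}}$ whenever $\sigma\cup\{v\}\in\pmcgg$, and criticality forces $\sigma\cup\{v\}\notin\pmcgg$. Combining with the inductive hypothesis gives the stated form for $C_{a_{2m+1}}$.

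For the consequence, given $\sigma\in C_{a_{2m+1}}$ with $1\le m\le k-1$, I will pick any perfect matching $\tau\supseteq\sigma$. The conditions $\sigma\cup\{a_{2i+1}\}\notin\pmcgg$, $i\le m$, force $a_1,a_3,\dots,a_{2m+1}\notin\tau$. I then chase forced edges from the left end: the absence of $a_1$ forces $c_1\in\tau$ and hence (by \Cref{Lemma 2}) $b_1\in\tau$, which excludes $a_2,b_2,c_2$; then $a_3\notin\tau$ forces $b_3,c_3\in\tau$; iterating yields $b_{2i+1},c_{2i+1}\in\tau$ and $a_{2i+2},b_{2i+2},c_{2i+2}\notin\tau$ for every $i\le m$ (the bound $m\le k-1$ keeps every index $\le 2k$ within the valid range). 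In particular $b_{2m},c_{2m}\notin\sigma$. By the same horizontal-conflict argument as above, $\sigma\cup\{a_{2m+1}\}\notin\pmcgg$ must be witnessed by a vertex conflict, forcing some edge of $\{b_{2m},c_{2m},b_{2m+1},c_{2m+1}\}$ into $\sigma$; since the first two are already excluded, $\sigma\cap\{b_{2m+1},c_{2m+1}\}\ne\emptyset$. The companion statement $\sigma\cap\{b_{2m+2},c_{2m+2}\}=\emptyset$ is then immediate from \Cref{Lemma 1}: any edge in $\{b_{2m+2},c_{2m+2}\}$ together with the just-established element of $\{b_{2m+1},c_{2m+1}\}\cap\sigma$ either shares a vertex directly or forms a bad matching of type X1 or X2, both contradicting $\sigma\in\pmcgg$.

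For the terminal case $C_{a_{2k+1}}=\emptyset$, I will rerun the chase for a hypothetical $\sigma\in C_{a_{2k+1}}$ and any $\tau\supseteq\sigma$: avoiding every $a_{2i+1}$, $i\le k$, forces $b_{2i+1},c_{2i+1}\in\tau$ for $i\le k-1$ (the largest indices of existing horizontal edges), but then $p_{n,1}$ at the final column $n=2k+1$ has no covering edge available---$a_n$ is forbidden by hypothesis, and $c_{n-1}=c_{2k}$ would conflict with $c_{2k-1}\in\tau$---so no such $\tau$ exists, contradicting $\sigma\in\pmcgg$. The main obstacle I anticipate is the bookkeeping in the inductive step, specifically verifying that deleting a vertical edge never promotes a non-face to a face of $\pmcgg$; this reduces cleanly by using \Cref{Lemma 1} to confine bad matchings and the relevant vertex conflicts to horizontal edges, but the argument must be written out carefully for both obstruction types.
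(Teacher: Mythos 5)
Your proposal is correct and follows essentially the same route as the paper's proof: induction along the sequence of element pairings $a_1, a_3, \dots, a_n$, using \Cref{Lemma 1} to confine bad matchings to horizontal edges (so that adjoining or deleting a vertical edge can only create or destroy direct vertex conflicts) together with downward-closure of $\pmcgg$ to eliminate the unwanted cases. The only differences are organizational: you collapse the paper's case-by-case check against each earlier $C_{a_{2j+1}}$ into a single direct membership verification, and you obtain the consequence $\sigma\cap\left\{b_{2m+1},c_{2m+1}\right\}\neq\emptyset$ and the terminal fact $C_{a_{2k+1}}=\emptyset$ by chasing forced edges in an extending perfect matching rather than by the paper's incremental exclusion bookkeeping --- both mechanisms yield the same facts.
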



    In this proof, we will encounter the argument that $\sigma \cup \left\{a_{{2m+1}}\right\}$ is not a matching or a bad matching, for some $\sigma \in \pmcgg$ repeatedly. We do not check the latter part because if $\sigma \cup \left\{a_{{2m+1}}\right\}$ is a bad matching, then it would mean it contains a subset of the form \hyperref[X1]{X1} or \hyperref[X2]{X2}. However, this would imply that $\sigma$ contains a subset of the form \hyperref[X1]{X1} or \hyperref[X2]{X2} contradicting $\sigma \in \pmcgg$. We start by proving this claim for $m=0$, our base case for the induction.
    
    \vspace{0.5cm}
    \noindent\textbf{Element pairing using $a_1$ (\textit{i.e.}, $a_{{2m+1}}$ when $m=0$):} Observe that, an element $\sigma$ is unpaired after pairing with $a_{1}$ if and only if $\sigma \cup \left\{a_{1}\right\} \notin \pmcgg$, where $a_{1} \notin \sigma$. Thus,
    \[C_{a_{1}} = \left\{ \sigma \in \pmcgg\ |\ a_{1} \notin \sigma,\sigma\cup \left\{a_{1}\right\} \notin \pmcgg \right\}\]
    
    Here, $\sigma \in \pmcgg$, $a_{1} \notin \sigma$ and $\sigma \cup \left\{a_{1}\right\} \notin \pmcgg$ implies that $\sigma$ is a \textit{matching} in $\mathcal{G}_{2\times n}$ without having $a_{1}$ in it, but as soon as we take $a_{1}$ with $\sigma$, it is not a matching anymore. Thus, $\sigma \cap \left\{b_{1},c_{1}\right\} \neq \emptyset$, implying $a_{2} \notin \sigma$. Furthermore, using \Cref{Lemma 1} and \Cref{Lemma 2}, we conclude that $\sigma \cap \left\{b_{2},c_{2}\right\} = \emptyset$.

     For the inductive step, assume $0 \leq t \leq k-1$ and for $0 \leq m \leq t$, $C_{a_{2m+1}}$ is as given in \Cref{claim:grid n odd case}. Then we show $C_{a_{2t+3}}$ is as given in \Cref{claim:grid n odd case}.

    \vspace{0.5cm}
    \noindent\textbf{Element pairing using $a_{{2t+3}}$ (\textit{i.e.}, $a_{{2m+1}}$ when $m=t+1$):}
    We define element pairing using $a_{{2t+3}}$ on the elements of $C_{a_{{2t+1}}}$. If an element $\sigma \in C_{a_{{2t+1}}}$is left unpaired after element pairing with $a_{{2t+3}}$, then one of the following two conditions must hold,
    


    \begin{enumerate}[label=\Alph*.]
        \item $a_{{2t+3}}\notin\sigma$ and $\sigma\cup\left\{a_{{2t+3}}\right\} \notin C_{a_{{2t+1}}}$, which means at least one of the following must hold,
        
        \begin{enumerate}[label = \roman*.]
            \item $\sigma\cup\left\{a_{{2t+3}}\right\} \notin \pmcgg$.
            
            This means $\sigma \cap \left\{b_{{2t+3}},c_{{2t+3}}\right\} \neq \emptyset$, \textit{i.e.}, either $b_{{2t+3}}$ or $c_{{2t+3}}$ is in $\sigma$, which can occur. Hence, this case is possible.
            
            \item $\sigma\cup\left\{a_{{2t+3}}\right\} \in \pmcgg$ and $\sigma\cup\left\{a_{{2t+3}}\right\} \notin C_{a_{1}}$.

            If $\sigma\cup\left\{a_{{2t+3}}\right\} \notin C_{a_{1}}$ then $\sigma \cup \left\{a_{1},a_{{2t+3}}\right\} \in \pmcgg$. This will contradict $\pmcgg$ being a simplicial complex as $\sigma \cup \left\{a_{1}\right\} \notin \pmcgg$ (since $\sigma \in C_{a_{1}}$). Hence, this case is not possible.
            
            \item $\sigma\cup\left\{a_{{2t+3}}\right\} \in C_{a_{{2j-1}}} \text{ and } \sigma\cup\left\{a_{{2t+3}}\right\} \notin C_{a_{{2j+1}}}$, for some $1\leq j\leq t$.

            If $\sigma\cup\left\{a_{{2t+3}}\right\} \notin C_{a_{{2j+1}}}$ then $\sigma \cup \left\{a_{{2l+1}},a_{{2j+1}}\right\} \in \pmcgg$, for at least one $l$, where $1\leq l\leq j-1$. This case will contradict $\pmcgg$ being a simplicial complex, as $\sigma \cup \left\{a_{{2l+1}}\right\} \notin \pmcgg$ (since $\sigma \in C_{a_{{2l+1}}}$). Hence, this case is not possible.
        \end{enumerate}

        \item $a_{{2t+3}}\in\sigma$ and $\sigma\setminus\left\{a_{{2t+3}}\right\} \notin C_{a_{{2t+1}}}$, which means at least one of the following must hold,

            \begin{enumerate}[label = \roman*.]
                \item $\sigma\setminus\left\{a_{{2t+3}}\right\} \notin \pmcgg$.
                
                This case will contradict $\pmcgg$ being a simplicial complex since $\sigma \in \pmcgg$. Hence, this case is not possible.
                
                \item $\sigma\setminus\left\{a_{{2t+3}}\right\} \in \pmcgg \text{ and } \sigma\setminus\left\{a_{{2t+3}}\right\} \notin C_{a_{1}}$.

                If $\sigma\setminus\left\{a_{{2t+3}}\right\} \notin C_{a_{1}}$, then $\sigma\cup\left\{a_{1}\right\}\setminus\left\{a_{{2t+3}}\right\} \in \pmcgg$. This is not true as $\sigma \cap \left\{b_{1},c_{1}\right\} \neq \emptyset$ (since $\sigma \in C_{a_{1}})$. Hence, this case is not possible.

                \item $\sigma\setminus\left\{a_{{2t+3}}\right\} \in C_{a_{{2j-1}}}$ and $\sigma\setminus\left\{a_{{2t+3}}\right\} \notin C_{a_{{2j+1}}}$, for some $1\leq j\leq t$.

                If $\sigma\setminus\left\{a_{{2t+3}}\right\} \notin C_{a_{{2j+1}}}$, then $\sigma \cup \left\{a_{{2l+1}}\right\}\setminus \left\{a_{{2j+1}}\right\} \in \pmcgg$, for at least one $l$, where $1\leq l\leq j-1$. A similar argument we used in the previous part will discard this case, where instead of $a_{1},b_{1},c_{1}$, we use $a_{{2l+1}},b_{{2l+1}},c_{{2l+1}}$, respectively. Hence, this case is not possible.
            \end{enumerate}
    
    \end{enumerate}

    Thus, 
    $$ C_{a_{{2t+3}}} = \left\{ \sigma \in \pmcgg\ \middle|\begin{array}{c}
    a_{{2i+1}} \notin \sigma,\sigma \cup \left\{a_{{2i+1}}\right\} \notin \pmcgg,\\
    \text{for } i\in\left\{0,1,2,\dots,t+1\right\} \end{array}\right\}.$$

    Using a similar argument we used in the base case, we can conclude that $\sigma \cap \left\{b_{{2t+3}},c_{{2t+3}}\right\} \neq \emptyset$, implying $\sigma \cap \left\{b_{{2t+4}},c_{{2t+4}}\right\} = \emptyset$.
    
    Now consider $C_{a_{n}}$. Note that $\sigma \cup \left\{a_{n}\right\} \notin \pmcgg$ implies that $\sigma \cap \left\{b_{n-1},c_{n-1}\right\} \neq \emptyset$ which is a contradiction since $\sigma \in C_{a_{n-2}}$, \textit{i.e.}, there is no $\sigma\in\pmcgg$ satisfying the above conditions. Therefore, using \Cref{Corollary 1}, we get
        \[C_{a_n} = \emptyset.\]
    Thus, $\pmcgg$ is contractible.

    \begin{case}
        \textbf{When $n$ is even.} \label{2n grid even case}
    \end{case}

    Let $n=2k+2$. Again, we use induction to prove this.

    \begin{claim}\label{claim: 2n grid even induction}
    For $0\leq m \leq k$, we consecutively perform element pairing using $a_{{2m+1}}$ followed by $b_{{2m+1}}$ and obtain the following set of critical cells:
    \begin{enumerate}
        \item After element pairing with $a_{{2m+1}}$, we get the following set of critical cells,

        $$ C_{a_{{2m+1}}} = \left\{ \sigma \in \pmcgg\ \middle|\begin{array}{c}
        a_{{i}} \notin \sigma,b_{{j}} \in \sigma,\sigma \cup \left\{a_{{i}}\right\} \notin \pmcgg,\\
        \sigma \cup \left\{a_{{j}}\right\} \setminus \left\{b_{{j}}\right\}\in \pmcgg,\\
        \text{for all } i\in\left\{1,3,\dots,2m+1\right\},\\ 
        \text{for all } j\in\left\{1,3,\dots,2m-1\right\}\end{array}\right\}.$$

        If $\sigma \in C_{a_{{2m+1}}}$ then it will satisfy the similar properties defined for the case when $n$ was odd along with the properties of critical cells in $C_{b_{{2m-1}}}$, discussed in the following point.

        \item After element pairing with $b_{{2m+1}}$, we'll get the following set of critical cells,
        $$ C_{b_{{2m+1}}} = C_{a_{{2m+1}}} \cap \left\{ \sigma \in \pmcgg\ \middle|\ \begin{array}{c}
        b_{{2m+1}} \in \sigma,\\
        \sigma \cup \left\{a_{{2m+1}}\right\} \setminus \left\{b_{{2m+1}}\right\}\in \pmcgg 
        \end{array}\right\}; 
        $$

        If $\sigma \in C_{b_{{2m+1}}}$ then $b_{{2m+1}} \in \sigma$; and due to $\sigma \cup \left\{a_{{2m+1}}\right\} \setminus \left\{b_{{2m+1}}\right\}\in \pmcgg$, it follows that $c_{{2m+1}} \notin \sigma$. It will also satisfy the properties of $C_{a_{{2m+1}}}$. 
    \end{enumerate}
    \end{claim}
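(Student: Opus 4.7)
The plan is to prove \Cref{claim: 2n grid even induction} by strong induction on $m$, paralleling the case-by-case analysis performed for Case 1 (odd $n$), but interlaced with the additional pairings using $b_{2m+1}$. For the base case $m=0$, the derivation of $C_{a_{1}}$ is identical to the one given in Case 1: an element $\sigma\in\pmcgg$ is unpaired after pairing with $a_1$ if and only if $a_1\notin \sigma$ and $\sigma\cup\{a_1\}\notin\pmcgg$, and then \Cref{Lemma 1} together with \Cref{Lemma 2} forces $\sigma\cap\{b_1,c_1\}\neq\emptyset$ and $\sigma\cap\{b_2,c_2\}=\emptyset$. So only the characterization of $C_{b_{2m+1}}$ from $C_{a_{2m+1}}$ is genuinely new.

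For the pairing with $b_{1}$ on $C_{a_{1}}$, I would split into the two usual families of potentially unpaired elements: (A) $b_1\notin\sigma$ with $\sigma\cup\{b_1\}\notin C_{a_{1}}$ and (B) $b_1\in\sigma$ with $\sigma\setminus\{b_1\}\notin C_{a_{1}}$. In family (A), since $\sigma\in C_{a_{1}}$ and $b_1\notin\sigma$, necessarily $c_1\in\sigma$; any extension of $\sigma$ to a perfect matching $\tau$ satisfies $b_1\in\tau$ by \Cref{Lemma 2}, so $\sigma\cup\{b_1\}\in\pmcgg$, and because $(\sigma\cup\{b_1\})\cup\{a_1\}$ already contains the conflicting pair $\{a_1,b_1\}$, in fact $\sigma\cup\{b_1\}\in C_{a_{1}}$, contradicting the assumption. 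In family (B) the sub-case $\sigma\setminus\{b_1\}\notin\pmcgg$ contradicts $\pmcgg$ being a simplicial complex, leaving precisely $\sigma\cup\{a_1\}\setminus\{b_1\}\in\pmcgg$, which is the stated condition; the observation $c_1\notin\sigma$ is then automatic, since otherwise $a_1$ and $c_1$ would share the vertex $p_{1,1}$ in $\sigma\cup\{a_1\}\setminus\{b_1\}$.

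For the inductive step, assume the claim holds up to $m$ and pair with $a_{2m+3}$ on $C_{b_{2m+1}}$, then with $b_{2m+3}$ on the resulting $C_{a_{2m+3}}$. The enumeration for $a_{2m+3}$ is essentially the one in Case 1 (families (A) and (B), each with three sub-cases involving $\pmcgg$, $C_{a_{1}}$, and $C_{a_{2j+1}}$ for intermediate $j$); the extra constraints $b_{2j+1}\in\sigma$ and $\sigma\cup\{a_{2j+1}\}\setminus\{b_{2j+1}\}\in\pmcgg$ inherited from $C_{b_{2m+1}}$ pass through unchanged because $a_{2m+3}$ shares no vertex with any $b_{2j+1}$ for $j\leq m$. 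The pairing with $b_{2m+3}$ is then handled by repeating the base-case argument above with all indices shifted by $2m+2$; once again, \Cref{Lemma 2} supplies the key force that makes family (A) collapse, while the simplicial-complex property rules out the extraneous sub-cases of family (B).

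The main obstacle, as in the odd case, is not any single step but the book-keeping of the sub-case hierarchy: each failure of membership in $C_{a_{2j+1}}$ or $C_{b_{2j+1}}$ unfolds into several possibilities, and at every layer we must rule out all of them except the one that produces the stated form. I expect the hardest family to be (B)(iii) at the $a_{2m+3}$ step, where one must exclude $\sigma\cup\{a_{2j+1}\}\setminus\{a_{2m+3}\}\in\pmcgg$: this uses the inductively established fact that $\sigma\in C_{a_{2j+1}}$ forces $\sigma\cap\{b_{2j+1},c_{2j+1}\}\neq\emptyset$, so any such set would contain a form \hyperref[X2]{X2} or break \Cref{Lemma 2}. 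Once the induction is completed through $m=k$, the only unpaired cell is the single $k$-simplex $\{b_1,b_3,\dots,b_{2k+1}\}$, and \Cref{Corollary 1} yields $\pmcgg\simeq\mathbb{S}^{k}$.
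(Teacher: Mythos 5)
Your proposal is correct and follows essentially the same route as the paper's argument: induction on $m$, with the unpaired cells after each element pairing sorted into the two families ($b_{2m+1}\notin\sigma$ versus $b_{2m+1}\in\sigma$, and likewise for $a_{2m+3}$) and eliminated via \Cref{Lemma 1}, \Cref{Lemma 2}, and the simplicial-complex property. Your treatment of family (A) in the $b$-pairing is a mild streamlining — using \Cref{Lemma 2} to show directly that $\sigma\cup\{b_{2m+1}\}$ lies in $C_{a_{2m+1}}$, rather than splitting $\sigma\cup\{b_{2m+1}\}\notin\pmcgg$ into the ``not a matching'' and ``bad matching'' sub-cases as the paper does — but the substance and the bookkeeping structure are the same.
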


     Before defining element pairing, note that we will not analyse the case if $\sigma \cup \left\{a_{{2m+1}}\right\}$ is a bad matching, and the reason for it is the same as given for the case when $n$ was odd. However, we will analyse the case if $\sigma \cup \left\{b_{{2m+1}}\right\}$ is a bad matching. For the base case, we prove the claim for $m=0$.
    \vspace{0.5cm}

    \noindent\textbf{Element pairing using $a_{{1}}$ and $b_{{1}}$ (\textit{i.e.}, $a_{{2m+1}}$ and $b_{{2m+1}}$ when $m=0$):}
    
    The element pairing defined for $a_{1}$ is similar to the case when $n$ is odd. Therefore, we directly define element pairing using $b_{1}$ on the elements of $C_{a_{1}}$. If an element $\sigma \in C_{a_{1}}$ is left unpaired after pairing with $b_{1}$, then one of the following two conditions must hold,
    
    \begin{enumerate}[label = \Alph*.]
        \item $b_{1}\notin\sigma$ and $\sigma\cup\left\{b_{1}\right\} \notin C_{a_{1}}$, which means either of the following:
        
        \begin{enumerate}[label = \roman*.]
            \item $\sigma\cup\left\{b_{1}\right\} \notin \pmcgg$.
            
            If $\sigma\cup\left\{b_{1}\right\}$ is not a matching then $\sigma \cap \left\{a_{1},a_{2},b_{2} \right\} \neq \emptyset$. From $\sigma \cup \left\{a_{1}\right\} \notin \pmcgg$, we have already concluded that $\sigma \cap \left\{b_{1},c_{1} \right\} \neq \emptyset$. Since, $b_{1} \notin \sigma$, this would imply that $c_{1} \in \sigma$. This means that $\sigma \cap \left\{a_{1},a_{2},c_{2}\right\} = \emptyset$. Furthermore, using \Cref{Lemma 1}, $b_{2}\notin \sigma$. Hence, this case is not possible. If it is a bad matching then $\sigma \cap \left\{a_{1},a_{2},b_{2} \right\} = \emptyset$. Using $\sigma \in \pmcgg$ and \Cref{Lemma 1}, $\sigma \cup \left\{b_{1}\right\}$ must contain a subset of the form \hyperref[X1]{X1}, \textit{i.e.}, $c_{2} \in \sigma$. This would imply $c_{1} \notin \sigma$ which contradicts $\sigma \cap \left\{b_{1},c_{1} \right\} \neq \emptyset$. Hence, this case is not possible.
            
            \item $\sigma\cup\left\{b_{1}\right\} \in \pmcgg \text{ and } \sigma\cup\left\{b_{1}\right\} \notin C_{a_{1}}$.

            If $\sigma\cup\left\{b_{1}\right\} \notin C_{a_{1}}$ then $\sigma \cup \left\{a_{1},b_{1}\right\} \in \pmcgg$. This will contradict $\pmcgg$ being a simplicial complex as $\sigma \cup \left\{a_{1}\right\} \notin \pmcgg$ (since $\sigma \in C_{a_{1}}$). Hence, this case is not possible. 
        \end{enumerate}

        \item $b_{1}\in\sigma$ and $\sigma\setminus\left\{b_{1}\right\} \notin C_{a_{1}}$.

        If $\sigma\setminus\left\{b_{1}\right\} \notin C_{a_{1}}$, then either $\sigma \setminus\left\{b_{1}\right\} \notin \pmcgg$ or $\sigma \setminus\left\{b_{1}\right\} \in \pmcgg$ and $\sigma \setminus\left\{b_{1}\right\} \notin C_{a_{1}}$. The former case contradicts $\pmcgg$ being a simplicial complex since $\sigma \in \pmcgg$. The later case implies $\sigma \cup \left\{a_{1}\right\} \setminus\left\{b_{1}\right\} \in \pmcgg$. We know that, $\sigma \cap \left\{b_{1},c_{1}\right\} \neq \emptyset$. Since $b_{1}\in \sigma$ and after removing $b_{1}$ from $\sigma\cup\left\{a_{1}\right\}$, it is still in $\pmcgg$, we conclude that $c_{1} \notin \sigma$, which can occur. Hence, this case is possible.
    \end{enumerate} 

    Thus,
    \begin{align*}
        C_{b_{1}} = \left\{ \sigma \in \pmcgg\ \middle|\begin{array}{c}
        a_{1} \notin \sigma,b_{1} \in \sigma,\sigma\cup \left\{a_{1}\right\} \notin \pmcgg,\\
        \sigma\cup\left\{a_{1}\right\}\setminus\left\{b_{1}\right\} \in \pmcgg 
        \end{array}\right\}
    \end{align*}
    
    Thus, if $\sigma \in C_{b_{1}}$, then $a_{1} \notin \sigma$; $b_{1} \in \sigma$ implying $a_{2},b_{2} \notin \sigma$ and due to $\sigma\cup\left\{a_{1}\right\}\setminus\left\{b_{1}\right\} \in \pmcgg$, we conclude that $c_{1} \notin \sigma$.

    For the inductive step, assume $0\leq t \leq k-1$, and for $0 \leq m \leq t$, $C_{a_{{2m+1}}}$ and $C_{b_{{2m+1}}}$ is as given in \Cref{claim: 2n grid even induction}. Then we show that $C_{a_{{2t+3}}}$ and $C_{b_{{2t+3}}}$ is as given in \Cref{claim: 2n grid even induction}.
    \vspace{0.5cm}
    
    \noindent\textbf{Element pairing using $a_{{2t+3}}$ and $b_{{2t+3}}$ (\textit{i.e.}, $a_{{2m+1}}$ and $b_{{2m+1}}$ when $m=t+1$):}

    We now define element pairing using $a_{{2t+3}}$ on the elements of $C_{b_{{2t+1}}}$. If an element $\sigma \in C_{b_{{2t+1}}}$ is left unpaired after pairing with $a_{{2t+3}}$, then at least one of the following two conditions must hold:
    

    \begin{enumerate}[label = \Alph*.]
        \item $a_{{2t+3}}\notin\sigma$ and $\sigma\cup\left\{a_{{2t+3}}\right\} \notin C_{b_{{2t+1}}}$, which means at least one of the following:
            \begin{enumerate}[label = \roman*.]
                \item $\sigma\cup\left\{a_{{2t+3}}\right\} \notin \pmcgg$.
                
                This means, $\sigma \cap \left\{ b_{{2t+3}},c_{{2t+3}}\right\} \neq \emptyset$, \textit{i.e.}, either $b_{{2t+3}}$ or $c_{{2t+3}}$ is in $\sigma$, which can occur. Hence, this case is possible.
                
                \item $\sigma\cup\left\{a_{{2t+3}}\right\} \in \pmcgg \text{ and } \sigma\cup\left\{a_{{2t+3}}\right\} \notin C_{a_{1}}$.

                If $\sigma\cup\left\{a_{{2t+3}}\right\} \notin C_{a_{1}}$ then $\sigma \cup \left\{a_{1},a_{{2t+3}}\right\} \in \pmcgg$. This will contradict $\pmcgg$ being a simplicial complex as $\sigma \cup \left\{a_{1}\right\} \notin \pmcgg$ (since $\sigma \in C_{a_{1}}$). Hence, this case is not possible.

                \item $\sigma\cup\left\{a_{{2t+3}}\right\} \in C_{a_{{2j_{1}-1}}} \text{ and } \sigma\cup\left\{a_{{2t+3}}\right\} \notin C_{b_{{2j_{1}-1}}}$, for some $1\leq j_{1} \leq t+1$.

                If $\sigma\cup\left\{a_{{2t+3}}\right\} \notin C_{b_{{2j_{1}-1}}}$ then either $\sigma \cup \left\{a_{{2j_{1}-1}},a_{{2t+3}}\right\} \in \pmcgg$ or $\sigma \cup \left\{a_{{2j_{1}-1}},a_{{2t+3}}\right\}\setminus \left\{b_{{2j_{1}-1}}\right\}\notin \pmcgg$. The case, $\sigma \cup \left\{a_{{2j_{1}-1}},a_{{2t+3}}\right\} \in \pmcgg$ is not possible because it will contradict $\pmcgg$ being a simplicial complex as $\sigma \cup \left\{a_{{2j_{1}-1}}\right\} \notin \pmcgg$ (since $\sigma \in C_{a_{{2j_{1}-1}}}$). For $\sigma \cup \left\{a_{{2j_{1}-1}},a_{{2t+3}}\right\}\setminus \left\{b_{{2j_{1}-1}}\right\}\notin \pmcgg$, would imply $\sigma \cap \left\{c_{{2t+3}},b_{{2t+3}}\right\} \neq \emptyset$. However, from our assumption $\sigma\cup\left\{a_{{2t+3}}\right\} \in C_{a_{{2j_{1}-1}}} \subset \pmcgg$ implying $\sigma \cap \left\{c_{{2t+3}},b_{{2t+3}}\right\} = \emptyset$ leading to a contradiction. Hence, this case is not possible.
                
                \item $\sigma\cup\left\{a_{{2t+3}}\right\} \in C_{b_{{2j_{2}-1}}} \text{ and } \sigma\cup\left\{a_{{2t+3}}\right\} \notin C_{a_{{2j_{2}+1}}}$, for some $1\leq j_{2} \leq t$. 
                
                This would imply that $\sigma \cup \left\{a_{{2j_{2}+1}},a_{{2t+3}}\right\} \in \pmcgg$ contradicting $\pmcgg$ being a simplicial complex as $\sigma \cup \left\{a_{{2j_{2}+1}}\right\} \notin \pmcgg$ (since $\sigma \in C_{a_{1}}$). Hence, this case is not possible.
            \end{enumerate}

        \item $a_{{2t+3}}\in\sigma$ and $\sigma\setminus\left\{a_{{2t+3}}\right\} \notin C_{b_{{2t+1}}}$, which means at least one of the following:

          \begin{enumerate}[label = \roman*.]
                \item $\sigma\setminus\left\{a_{{2t+3}}\right\} \notin \pmcgg$.
                
                This case contradicts $\pmcgg$ being a simplicial complex as $\sigma \in \pmcgg$. Hence, this case is not possible.
                
                \item $\sigma\setminus\left\{a_{{2t+3}}\right\} \in \pmcgg \text{ and } \sigma\setminus\left\{a_{{2t+3}}\right\} \notin C_{a_{1}}$.

                This would imply that $\sigma \cup \left\{a_{1}\right\} \setminus \left\{a_{{2t+3}}\right\} \in \pmcgg$. This is not true as $b_{1} \in \sigma$ (since $\sigma \in C_{b_{1}}$). Hence, this case is not possible.

                \item $\sigma\setminus\left\{a_{{2t+3}}\right\} \in C_{a_{{2j_{1}-1}}} \text{ and } \sigma\setminus\left\{a_{{2t+3}}\right\} \notin C_{b_{{2j_{1}-1}}}$, for some $1\leq j_{1} \leq t+1$.

                If $\sigma\setminus\left\{a_{{2t+3}}\right\} \notin C_{b_{{2j_{1}-1}}}$ then $\sigma\cup\left\{a_{{2j_{1}-1}}\right\}\setminus\left\{b_{{2j_{1}-1}},a_{{2t+3}}\right\}\notin\pmcgg$. However, this contradicts $\pmcgg$ being a simplicial complex since $\sigma\cup\left\{a_{{2j_{1}-1}}\right\}\setminus\left\{b_{{2j_{1}-1}}\right\} \in\pmcgg$ (since $\sigma \in C_{b_{{2j_{1}-1}}}$). Hence, this case is not possible.

                \item $\sigma\setminus\left\{a_{{2t+3}}\right\} \in C_{b_{{2j_{2}-1}}} \text{ and } \sigma\setminus\left\{a_{{2t+3}}\right\} \notin C_{a_{{2j_{2}+1}}}$, for some $1\leq j_{2} \leq t$.

                If $\sigma\setminus\left\{a_{{2t+3}}\right\} \notin C_{a_{{2j_{2}+1}}}$ then $\sigma\cup\left\{a_{{2j_{2}+1}}\right\}\setminus\left\{a_{{2t+3}}\right\}\in\pmcgg$. This is not true as $b_{{2j_{2}+1}} \in \sigma$ (since $\sigma \in C_{b_{{2j_{2}+1}}}$). Hence, this case is not possible.
            \end{enumerate}
    \end{enumerate}

    Thus,
    \begin{align*}
        C_{a_{{2t+3}}} = \left\{ \sigma \in \pmcgg\ \middle|\begin{array}{c}
        a_{{i}} \notin \sigma,b_{{j}} \in \sigma,\sigma \cup \left\{a_{{i}}\right\} \notin \pmcgg,\\
        \sigma \cup \left\{a_{{j}}\right\} \setminus \left\{b_{{j}}\right\}\in \pmcgg,\\
        \text{for all } i\in\left\{1,3,\dots,2t+3\right\},\\ 
        \text{for all } j\in\left\{1,3,\dots,2t-1\right\}\end{array}\right\}.
    \end{align*}

     Observe that, $\sigma$ satisfies all the properties of $C_{b_{{2t+1}}}$ since, $\sigma \in C_{a_{{2t+3}}} \subset C_{b_{{2t+1}}}$. Also, $a_{{2t+3}} \notin \sigma$ and $\sigma \cup \left\{a_{{2t+3}}\right\} \notin \pmcgg$ implies that $\sigma \cap \left\{ b_{{2t+3}},c_{{2t+3}}\right\} \neq \emptyset$, implying $a_{{2t+4}} \notin \sigma$. Furthermore, using \Cref{Lemma 1} and \Cref{Lemma 2}, we conclude that $\sigma \cap \left\{b_{{2t+4}},c_{{2t+4}}\right\} = \emptyset$.

     At last, we define element pairing using $b_{{2t+3}}$ on the elements of $C_{a_{{2t+3}}}$. If an element $\sigma \in C_{a_{{2t+3}}}$ is left unpaired after pairing with $b_{{2t+3}}$ then one of the following two conditions must hold:
    

    \begin{enumerate}[label = \Alph*.]
        \item $b_{{2t+3}} \notin \sigma$ and $\sigma\cup\left\{b_{{2t+3}}\right\} \notin C_{a_{{2t+3}}}$, which means at least one of the following:
            
        \begin{enumerate}[label = \roman*.]
            \item $\sigma\cup\left\{b_{{2t+3}}\right\} \notin \pmcgg$.
                
            \item $\sigma\cup\left\{b_{{2t+3}}\right\} \in \pmcgg \text{ and } \sigma\cup\left\{b_{{2t+3}}\right\} \notin C_{a_{1}}$.

            These two cases are similar to those we discussed while performing element pairing with $b_{1}$; replace the index $m=0$ with $m=t+1$.
            \vspace{1em}
            \item $\sigma\cup\left\{b_{{2t+3}}\right\} \in C_{a_{{2j_{1}-1}}} \text{ and } \sigma\cup\left\{b_{{2t+3}}\right\} \notin C_{b_{{2j_{1}-1}}}$, for some $1\leq j_{1} \leq t+1$.

            If $\sigma\cup\left\{b_{{2t+3}}\right\} \notin C_{b_{{2j_{1}-1}}}$, then $\sigma\cup\left\{a_{{2j_{1}-1}},b_{{2t+3}}\right\} \setminus \left\{b_{{2j_{1}-1}}\right\} \notin \pmcgg$. If it is not a matching then, $\sigma \cap \left\{a_{{2t+3}},a_{{2t+4}},b_{{2t+2}},b_{{2t+4}}\right\} \neq \emptyset$, which is a contradiction since $\sigma\cup\left\{b_{{2t+3}}\right\} \in C_{a_{{2j_{1}-1}}} \subset \pmcgg$ and thus
            none of these edges (\textit{i.e.}, $a_{{2t+3}},a_{{2t+4}},b_{{2t+2}},b_{{2t+4}}$) are in $\sigma$. If it is a bad matching, then using \Cref{Lemma 1}, $\sigma \cap \left\{ c_{{2t+2}},c_{{2t+4}}\right\} \neq \emptyset$. We know that $b_{{2t+3}} \notin \sigma$ and $\sigma \cap \left\{b_{{2t+3}}, c_{{2t+3}}\right\} \neq \emptyset$ (since $\sigma \in C_{a_{{2t+3}}}$), implying $c_{{2t+3}} \in \sigma$. This contradicts $\sigma \cap \left\{c_{{2t+2}},c_{{2t+4}}\right\} \neq \emptyset$. Hence, this case is not possible.

            \item $\sigma\cup\left\{b_{{2t+3}}\right\} \in C_{b_{{2j_{2}-1}}} \text{ and } \sigma\cup\left\{b_{{2t+3}}\right\} \notin C_{a_{{2j_{2}+1}}}$, for some $1\leq j_{2} \leq t+1$.

            If $\sigma\cup\left\{b_{{2t+3}}\right\} \notin C_{a_{{2j_{2}+1}}}$, then $\sigma \cup \left\{a_{{2j_{2}+1}}, b_{{2t+3}}\right\} \in \pmcgg$. This contradicts $\pmcgg$ being a simplicial complex since $\sigma \cup \left\{a_{{2j_{2}+1}}\right\} \notin \pmcgg$. Hence, this case is not possible.
        \end{enumerate}

        \item $b_{{2t+3}}\in\sigma$ and $\sigma\setminus\left\{b_{{2t+3}}\right\} \notin C_{a_{{2t+3}}}$, which means at least one of the following:

        \begin{enumerate}[label = \roman*.]
            \item $\sigma\setminus\left\{b_{{2t+3}}\right\} \notin \pmcgg$.
            
            This case contradicts $\pmcgg$ being a simplicial complex as $\sigma \in \pmcgg$. Hence, this case is not possible.
                
            \item $\sigma\setminus\left\{b_{{2t+3}}\right\} \in \pmcgg \text{ and } \sigma\setminus\left\{b_{{2t+3}}\right\} \notin C_{a_{1}}$.

            This would imply that $\sigma \cup \left\{a_{1}\right\} \setminus \left\{b_{{2t+3}}\right\} \in \pmcgg$. This is not true as $b_{1} \in \sigma$ (since $\sigma \in C_{b_{1}}$). Hence, this case is not possible.

            \item $\sigma\setminus\left\{b_{{2t+3}}\right\} \in C_{a_{{2j_{1}-1}}} \text{ and } \sigma\setminus\left\{b_{{2t+3}}\right\} \notin C_{b_{{2j_{1}-1}}}$, for some $1\leq j_{1} \leq t+1$.

            If $\sigma\setminus\left\{b_{{2t+3}}\right\} \notin C_{b_{{2j_{1}-1}}}$ then $\sigma\cup\left\{a_{{2j_{1}-1}}\right\}\setminus\left\{b_{{2j_{1}-1}},b_{{2t+3}}\right\}\notin\pmcgg$. However, this contradicts $\pmcgg$ being a simplicial complex since $\sigma\cup\left\{a_{{2j_{1}-1}}\right\}\setminus\left\{b_{{2j_{1}-1}}\right\} \in\pmcgg$ (since $\sigma \in C_{b_{{2j_{1}-1}}}$). Hence, this case is not possible.

            \item $\sigma\setminus\left\{b_{{2t+3}}\right\} \in C_{b_{{2j_{2}-1}}} \text{ and } \sigma\setminus\left\{b_{{2t+3}}\right\} \notin C_{a_{{2j_{2}+1}}}$, for some $1\leq j_{2} \leq t$.
            
            This would imply that $\sigma \cup \left\{a_{{2j_{2}+1}}\right\} \setminus \left\{b_{{2t+3}}\right\} \in \pmcgg$. This is not true as $b_{{2j_{2}+1}} \in \sigma$ (since $\sigma \in C_{b_{{2j_{2}+1}}}$). Hence, this case is not possible.

            \item $\sigma\setminus\left\{b_{{2t+3}}\right\} \in C_{b_{{2t+1}}} \text{ and } \sigma\setminus\left\{b_{{2t+3}}\right\} \notin C_{a_{{2t+3}}}$.

            If $\sigma\setminus\left\{b_{{2t+3}}\right\} \notin C_{a_{{2t+3}}}$ then $\sigma \cup\left\{a_{{2t+3}}\right\} \setminus \left\{b_{{2t+3}}\right\} \in \pmcgg$. We know that, $\sigma \cap \left\{b_{{2t+3}},c_{{2t+3}}\right\} \neq \emptyset$. Since, $b_{{2t+3}} \in \sigma$ and after removing it from $\sigma \cup\left\{a_{{2t+3}}\right\}$, it is still in $\pmcgg$, we conclude that $c_{{2t+3}} \notin \sigma$, which can occur. Hence, this case is possible.
        \end{enumerate}
    \end{enumerate}

     Thus,
    \begin{align*}
        C_{b_{{2t+3}}} = C_{a_{{2t+3}}} \cap \left\{ \sigma \in \pmcgg\ \middle|\ \begin{array}{c}
        b_{{2t+3}} \in \sigma,\\
        \sigma \cup \left\{a_{{2t+3}}\right\} \setminus \left\{b_{{2t+3}}\right\}\in \pmcgg 
        \end{array}\right\}.
    \end{align*}

    Thus, if $\sigma \in C_{b_{{2t+3}}}$, it satisfies all the conditions of $C_{a_{{2t+3}}}$. Also, due to $\sigma\cup\left\{a_{{2t+3}}\right\}\setminus\left\{b_{{2t+3}}\right\} \in \pmcgg$, we conclude that $c_{{2t+3}} \notin \sigma$. This completes our induction step.

     If $\sigma\in C_{b_{n-1}}$, then $a_{i} \notin \sigma$ for $i\in\left\{1,2,3,\dots,n\right\}$; $b_{1}$, $b_{3}$, $b_{5}$, $\dots$, $b_{{n-1}} \in \sigma$ which implies that $b_{l},c_{l}\notin \sigma$ for $l\in\left\{2,4,6,\dots,n\right\}$. Furthermore, since $\sigma\cup\left\{a_{i}\right\}\setminus\left\{b_{i}\right\} \in \pmcgg$ for $i\in\left\{1,3,5,\dots,n-1\right\}$, we conclude that $c_{i} \notin \pmcgg$. Thus,
        \[C_{b_{n-1}} = \left\{\left\{b_{1},b_{3},\dots,b_{n-1}\right\}\right\}.\]
    Note that, the cell contained in $C_{b_{n-1}}$ has cardinality $k+1$, where $n=2k+2$. Therefore, using \Cref{Corollary 1}, we get, $$\pmcgg\simeq \mathbb{S}^k.$$
This completes the proof.
\end{proof}

\end{document}